\tikzset{
commutative diagrams/.cd,
ampersand replacement=\&]
}
\tikzstyle{braid}=[thick]
\tikzstyle{twocell}=[->,double, double equal sign distance,thick]
\tikzset{arr/.style={circle,draw,inner sep=0}}
\tikzset{empty/.style={inner sep=0pt, minimum size=0pt}}
\renewcommand{\hat}{\overleftarrow}
\renewcommand{\check}{\overrightarrow}
 \newtheorem{proposition}{Proposition}[section] 
 \newtheorem*{lemma*}{Lemma}
 \newtheorem{theorem}[proposition]{Theorem}
 \newtheorem{corollary}[proposition]{Corollary}
 \theoremstyle{definition}
 \newtheorem{definition}[proposition]{Definition}
 \newtheorem*{definition*}{Definition}
 \newtheorem{example}[proposition]{Example}
 \newtheorem*{example*}{Example}
 \newtheorem{remark}[proposition]{Remark}
\numberwithin{equation}{section}
\def\C{\mathcal C}
\def\rev{^{\mathsf{rev}}}
\def\vec{\mathsf{vec}}
\begin{document}

\title{Multiplier bialgebras in braided monoidal categories}
\author{Gabriella B\"ohm}
\address{Wigner Research Centre for Physics, H-1525 Budapest 114,
P.O.B.\ 49, Hungary} 
\email{bohm.gabriella@wigner.mta.hu}
\author{Stephen Lack}
\address{Department of Mathematics, Macquarie University NSW 2109, 
Australia}
\email{steve.lack@mq.edu.au}
\thanks{The authors gratefully acknowledge the financial support of the
  Hungarian Scientific Research Fund OTKA (grant K108384), the Australian
  Research Council Discovery Grant (DP130101969), an ARC Future Fellowship
  (FT110100385), and the Nefim Fund of Wigner RCP. Each is grateful to
  the warm hospitality of the other's institution during research visits in
  Nov-Dec 2013 (Sydney) and Sept-Oct 2014 (Budapest). Both authors are
  grateful to Jos\'e G\'omez-Torrecillas for discussions and comments.} 
\begin{abstract}
Multiplier bimonoids (or bialgebras) in arbitrary braided monoidal categories
are defined. They are shown to possess monoidal categories of comodules and
modules. These facts are explained by the structures carried by their induced
functors. 
\end{abstract} 
\date\today
\maketitle

\section{Introduction}

A {\em bimonoid} (or {\em bialgebra}) $A$ in a braided monoidal category can
equivalently be described without referring separately to the multiplication
$m:A.A \to A$ and comultiplication $d:A \to A.A$ but only the unit, the
counit, and the so-called {\em fusion morphism} 
\cite{Street:fusion} 
$$
t_1:=\big(
\xymatrix{
A.A\ar[r]^-{d.1}&
A.A.A\ar[r]^-{1.m}&
A.A}\big).
$$
In terms of these data, the axioms are given by the {\em fusion equation},
expressed by the commutativity of
\begin{equation}\label{eq:intr_fusion}
\xymatrix @C=35pt{
A^3 \ar[r]^-{1.t_1}\ar[d]_-{t_1.1}&
A^3 \ar[r]^-{b.1}&
A^3 \ar[r]^-{1.t_1}&
A^3 \ar[r]^-{b^{-1}.1}&
A^3\ar[d]^-{t_1.1}\\
A^3 \ar[rrrr]_-{1.t_1}&&&&
A^3}
\end{equation}
(where $b$ stands for the braiding) and some compatibilities of $t_1$ with the
unit and the counit. This is analyzed in \cite[Section 2]{LackStreet:skew} in
terms of {\em augmented lax tricocycloids}: a morphism $t_1$ satisfies the
fusion equation if and only if $t'=b^{-1}t_1$  satisfies the cocycle condition
$(1.t')(t'.1)(1.t')=(t'.1)(1.b^{-1})(t'.1)$; an object $A$ equipped with a
morphism $A.A\to A.A$ satisfying the cocycle condition is called a lax
tricocycloid, or an augmented lax tricocycloid if it is also unital and
counital. The existence of an antipode; that is, the Hopf condition on the
bimonoid $A$, is equivalent to the invertibility of the fusion morphism $t_1$.
 
The notion of bimonoid is self-dual; that is, symmetric under reversing the
arrows in the diagrams encoding the axioms. However, the above description in
terms of the fusion morphism $t_1$ is not self-dual. Instead, there is an
equivalent dual description in terms of
the morphism 
$$
t_2:=\big(
\xymatrix{
A.A\ar[r]^-{1.d}&
A.A.A\ar[r]^-{m.1}&
A.A}\big)
$$
which satisfies a modified fusion equation. In fact, using the braiding one
can construct two further variants
$$
\xymatrix@R=10pt{
t_3:=\big(A.A\ar[r]^-{d.1}&
A.A.A\ar[r]^-{1.b^{-1}}&
A.A.A\ar[r]^-{1.m}&
A.A\big)\\
t_4:=\big(A.A\ar[r]^-{1.d}&
A.A.A\ar[r]^-{b^{-1}.1}&
A.A.A\ar[r]^-{m.1}&
A.A\big)}
$$
of the fusion morphism. Each of the morphisms $t_1, t_2, t_3$ and $t_4$ can be
expressed in terms of each of the others with the help of the unit and the
counit of $A$.
 
Using the language of fusion morphisms, a (say, right) comodule $V$ over the
comonoid $A$ can be described equivalently in terms of a morphism $v:V.A\to
V.A$. Coassociativity and counitality of the coaction $V \to V.A$ translate to
commutativity of the respective diagrams  
\begin{equation}\label{eq:intro_comod}
\xymatrix{
V.A.A \ar[r]^-{1.t_1}\ar[d]_-{v.1}&
V.A.A \ar[r]^-{b.1}&
A.V.A \ar[r]^-{1.v}&
A.V.A \ar[r]^-{b^{-1}.1}&
V.A.A\ar[d]^-{v.1}&
V.A\ar[r]^-v\ar[rd]_-{1.e}&
V.A\ar[d]^-{1.e}\\
V.A.A \ar[rrrr]_-{1.t_1}&&&&
V.A.A&&
V}
\end{equation}
(where $e$ denotes the counit of $A$). By duality, there is a symmetric
description, in terms of $t_1$, of left modules over the monoid $A$.

A key example of a bialgebra (in the symmetric monoidal category $\vec$
of vector spaces over a field $k$) is given by the algebra $k^G$ of all
$k$-valued functions on a finite monoid $G$. In the case of functions of
finite support on an infinite monoid $G$, we still have the
multiplication and the counit of $k^G$, but not the unit or comultiplication;
instead of a bialgebra, under a mild cancellation assumption we
have a {\em multiplier bialgebra}. Motivated by this, we consider various
weakenings of the notion of bimonoid in a braided monoidal category. 

First we ask what happens to the above picture if we give up
self-duality and consider a counital, but no longer unital fusion morphism
$t_1:A.A\to A.A$? By this we mean that $t_1$ satisfies the fusion equation
\eqref{eq:intr_fusion} and is equipped with a morphism (the {\em
counit}) $e$ from $A$ to the monoidal unit; such that $(1.e)t_1=1.e$. We do
not require the existence of a unit and drop all axioms in
\cite{LackStreet:skew} that involve the unit. Then composing $t_1$ with $e.1$, 
we can still equip $A$ with a (no longer unital but still associative)
multiplication $A.A\to A$. But in the absence of a unit, there is no longer a
comultiplication $A\to A.A$. 

Our study of counital but not necessarily unital fusion morphisms 
is motivated by Van Daele's approach to {\em (regular) multiplier Hopf algebra}
\cite{VDae:MHA} (over a field) --- based on generalizations
of the fusion morphisms $t_1$ and $t_2$ (together with $t_3$ and
$t_4$ in the regular case) which are in turn counital but non-unital 
fusion morphisms (in the symmetric monoidal category of vector
spaces). In the absence of a unit for the algebra $A$, none of the morphisms
$t_1, t_2, t_3$ and $t_4$ determines the others. All of them are needed to
formulate the axioms. Although in the absence of a unit none of the 
fusion morphisms $t_1, t_2, t_3$ and $t_4$ determines a comultiplication $A\to
A.A$; thanks to their compatibility axioms any pair of them determines a
generalized comultiplication taking values in the multiplier algebra
\cite{Dauns:multiplier} of $A.A$. 

The main aim of this paper is to extend the definition of {\em (regular)
multiplier bialgebra} to any braided monoidal category. 
In formulating the definition, any reference to multipliers is completely
avoided. If applying it to the symmetric monoidal category of vector spaces,
our definition covers the notion of multiplier bialgebra in \cite[Theorem
2.11]{BoGTLC:wmba} but it is slightly more general.

We build up gradually to the full structure of regular multiplier bimonoid,
first developing in Section \ref{sec:counital_fusion_morphism} the theory of a
single (counital) fusion morphism $t_1$; then considering in Section
\ref{sec:mbm} multiplier bimonoids, involving a pair of these, such as $t_1$
and $t_2$ or $t_3$ and $t_4$; before finally considering in Section
\ref{sec:reg_mbm} regular multiplier bimonoids, which involve all four fusion
morphisms. We observed above that, unlike the situation with bimonoids, the
notion of multiplier bimonoid is not stable under passage from a braided
monoidal category to its opposite. But there are other duality principles
available in braided monoidal category: one can reverse the multiplication,
one can replace the braiding by its inverse, or one can do both. The resulting
four variants in some sense correspond to the four fusion morphisms in a
regular multiplier bimonoid. Our proofs often rely on these duality
principles. 

At each stage we study the corresponding notions of module and comodule. 
The unit of the monoid $A$ does not occur in the diagrams in
\eqref{eq:intro_comod}. Hence they can be used to define comodules also for
only counital fusion morphisms. We study this situation in Section
\ref{sec:tric_comod}, where we show that such comodules constitute a monoidal
category admitting a strict monoidal forgetful functor to the base
category. Comodules over a bimonoid (i.e. over a unital and counital fusion
morphism) constitute a monoidal category because they induce {\em bicomonads}
(that is, monoidal comonads). This is no longer true for a counital but
non-unital fusion morphism. We describe a generalization of the notion
of monoidal comonad which can be used to explain the monoidal structure in
this case. 

It is somewhat more delicate what should be a module over a counital but
non-unital fusion morphism. Replacing an associative (left)
action, we require the existence of a morphism $q:A.Q\to A.Q$ satisfying an
appropriate fusion type equation. But if $A$ has no unit, it is not immediate
what should replace the unitality of an action. One possibility is to require
that the morphism 
$$
\xymatrix{
A.Q\ar[r]^-{q}&
A.Q\ar[r]^-{e.1}&
Q}
$$
be an epimorphism. But in order to develop the theory,
we need this epimorphism to be preserved by tensoring on either side as
well as by the various fusion morphisms. So for simplicity we
suppose that it is preserved by all functors; equivalently, that it is
a split epimorphism. (When the fusion morphism is unital as well as
counital, and so corresponds to a bimonoid, this is equivalent to the usual
unitality condition for an action.) In Section \ref{sec:tric_module} we
investigate additional assumptions on a counital fusion morphism, weaker than 
unitality, under which such modules constitute a monoidal category
admitting a strict monoidal forgetful functor to the base category. 
Again, modules over a bimonoid (i.e. over a unital and counital fusion
morphism) constitute a monoidal category because they induce
{\em bimonads} (that is, opmonoidal monads). This is no longer true for a
counital but non-unital fusion morphism. We describe a generalization of
the notion of opmonoidal monad which can be used to explain the monoidal
structure in this case. 

Assuming {\em regularity} of a multiplier bimonoid in a braided monoidal
category, its comodules and modules are defined in the respective Sections
\ref{sec:comodule} and \ref{sec:module} as objects carrying compatible
(co)module structures over appropriate pairs of the fusion morphisms
$t_1, t_2, t_3$ and $t_4$. They are shown to constitute monoidal
categories. This is explained by the structure carried by their induced
functors. 
The definition of (co)modules in terms of {\em pairs} of the fusion morphisms
$t_1, t_2, t_3$ and $t_4$ goes back to \cite{VDaZha:corep_I}. This approach
becomes particularly important in the generalization \cite{Bohm:wmba_comod} to
{\em weak} multiplier bialgebras \cite{BoGTLC:wmba}, when both (co)actions are
needed to equip any (co)module with the structure of a bimodule over the
so-called base algebra.

One could also define {\em multiplier Hopf monoids} in a braided monoidal
category as multiplier bimonoids whose constituent fusion morphisms $t_1$ and
$t_2$ are isomorphisms. In the symmetric monoidal category of vector spaces
this is known to be equivalent to the existence of an {\em antipode} taking
values in the multiplier algebra \cite{VDae:MHA}. In our general setting,
however, the notion of multipliers is not available. The abstract categorical
treatment of multipliers requires the additional assumption that our braided
monoidal category is in fact {\em closed}. We plan to expound this
construction, and the resulting theory of multiplier Hopf monoids, in a
subsequent paper \cite{BL:braided_mha}. 

\subsection*{Notation}

Throughout, $\C$ is a braided monoidal category (unless otherwise stated). We
denote the monoidal product by $.$; the monoidal unit by $I$; and the braiding
by $b$. For the monoidal product of several copies of the same object also the
power notation is used: $A.A=A^2$. Composition of morphisms is denoted by
juxtaposition and the identity morphism (at any object) is denoted by $1$. We
do not assume that the monoidal structure is strict but --- relying on
coherence --- usually we omit explicitly denoting the associativity and unit
isomorphisms. 
For a braided monoidal category $\C$, we denote by $\overline \C$ the same
monoidal category $\C$ with the inverse braiding $(b_{Y,X})^{-1}:X.Y\to Y.X$. 
The {\em reverse} $\C\rev$ of $\C$ means the same category $\C$ with the
opposite monoidal product $(X,Y) \mapsto Y.X$ (thus the same monoidal unit
$I$) and the braiding $b_{Y,X}:Y.X\to X.Y$. The braided monoidal categories
$\overline \C\rev$ and $\overline {\C\rev}$ clearly coincide.

\section{Counital fusion morphisms}\label{sec:tricoc}

Motivated by our definition in Section \ref{sec:multiplier_bimonoid} of
multiplier bimonoid in a braided monoidal category, in this section we study
counital fusion morphisms possibly without unit. To any such creature we
associate a monoidal category of appropriately defined comodules. Under
further assumptions --- about certain morphisms being epimorphic --- we
associate to it a second monoidal category of suitable modules. The
monoidality of these categories is explained by the structure of the functors
induced by a counital fusion morphism.

\subsection{Definition and properties}\label{sec:counital_fusion_morphism}

Lax tricocycloids --- corresponding bijectively to fusion morphisms ---
were defined in \cite{LackStreet:skew}; where they were equipped both with a
counit and a unit. We need the following weakening.

\begin{definition}\label{def:counital_tricocycloid}
A {\em counital fusion morphism} in a braided monoidal category $\C$ is given
by a pair of morphisms $t:A^2\to A^2$ (called a {\em fusion morphism}) and
$e:A\to I$ (called the {\em counit}) such that the 
following diagrams commute. 
\begin{equation}\label{eq:counital_tricoc}
\xymatrix{
A^3 \ar[r]^-{1.t}\ar[d]_-{t.1}&
A^3 \ar[r]^-{b.1}&
A^3 \ar[r]^-{1.t}&
A^3 \ar[r]^-{b^{-1}.1}&
A^3\ar[d]^-{t.1}&
A^2\ar[r]^-t\ar[rd]_-{1.e}&
A^2\ar[d]^-{1.e}\\
A^3 \ar[rrrr]_-{1.t}&&&&
A^3&&
A}
\end{equation}
We refer to the first condition as the {\em fusion equation} and to the
second one as the {\em counitality condition}.
\end{definition}

A class of examples, albeit unital ones, comes from bimonoids in braided
monoidal categories:

\begin{example}\label{ex:bimonoid}
Consider a bimonoid $A$ in a braided monoidal category $\C$. Denote its monoid
structure by $(m:A^2\to A, u:I\to A)$ and denote the comonoid structure by
$(d:A\to A^2, e:A\to I)$. Then
$$
t:=\big(
\xymatrix{
A^2\ar[r]^-{d.1}&
A^3\ar[r]^-{1.m}&
A^2}\big)
$$
is a counital fusion morphism. This is easiest to see using string diagrams;
when we draw

\begin{equation*}
\begin{tikzpicture} % m
\path (0,1) node[empty,name=s1t] {}
(1,1) node[empty,name=s2t]  {} 
(0.5,0.5) node[empty,name=m] {} 
(0.5,0) node[empty,name=sb] {}
(1.5,0.5) node{} ;
\path (-0.5,0.5) node {$m=$};
\draw[braid] (s1t) to[out=270,in=180] (m) to[out=0,in=270] (s2t);
\draw[braid] (m) to (sb) ;
\end{tikzpicture}~~~~
\quad
\begin{tikzpicture} % d
\path (0,0) node[empty,name=s1t] {}
(1,0) node[empty,name=s2t]  {} 
(0.5,0.5) node[empty,name=m] {} 
(0.5,1) node[empty,name=sb] {}
(1.5,0.5) node{} ;
\path (-0.5,0.5) node {$d=$};
\draw[braid] (s1t) to[out=90,in=180] (m) to[out=0,in=90] (s2t);
\draw[braid] (m) to (sb) ;
\end{tikzpicture}~~~~
\quad
\begin{tikzpicture} % e
\path (0,1) node[empty,name=s1t] {}
(0,0.3) node[empty,name=e] {$\circ$} 
(0.0,0) node[empty,name=sb] {}
(1.0,0.5) node{} ;
\path (-0.5,0.5) node {$e=$};
\draw[braid] (s1t) to (e);
\end{tikzpicture}~~~~
\quad
\begin{tikzpicture} % b
\path (0,1) node[empty,name=s1t] {}
(1,1) node[empty,name=s2t] {}
(0,0) node[empty,name=s1b] {}
(1,0) node[empty,name=s2b] {}
(1.5,0.5) node{} ;
\path (-0.5,0.5) node {$b=$};
\draw[braid] (s2t) to (s1b);
\fill[white] (0.5,0.5) circle (0.1);
\draw[braid] (s1t) to (s2b);
\end{tikzpicture} . 
\end{equation*}
The fusion equation follows by the calculation 
\medskip

\begin{equation*}
\begin{tikzpicture}[every node/.style={empty}] % diag1
    \path (0.25,2.0) node[name=s1t] {}
(0.75,2.0) node[name=s2t] {}
(1.5,2.0) node[name=s3t] {}
(-0.25,0) node[name=s1b] {} 
(0.25,0) node[name=s2b] {}
(1.0,0) node[name=s3b] {};
\path (0.75,1.6) node[name=d1] {}
(0.25,1.2) node[name=d2] {} 
(0.0,0.8) node[name=d3] {};
\path (1.25,1.2) node[name=m1] {}
(1.0,0.4) node[name=m3] {}
(0.25,0.4) node[name=m2] {} ;
\draw[braid] (s2t) to[out=270,in=90] (d1); 
\draw[braid] (s3t) to[out=270,in=0] (m1) to[out=180,in=0] (d1) ;
\draw[braid] (m2) to (s2b);
\draw[braid] (m3) to (s3b);
\draw[braid] (s1t) to (d2);
\path[braid, name path=y] (m1) to[out=270,in=0] (m3) to[out=180,in=0] (d2) to[out=180,in=90] (d3);
\draw[braid, name path=x] (d1) to[out=180,in=0] (m2) to[out=180,in=0] (d3) to[out=180,in=90] (s1b);
\fill[white, name intersections={of=x and y}] (intersection-1) circle(0.1);
\draw[braid] (m1) to[out=270,in=0] (m3) to[out=180,in=0] (d2) to[out=180,in=90] (d3);
\draw (1.8,0.75) node {$=$};
  \end{tikzpicture}
  \begin{tikzpicture}[every node/.style={empty}] % diag2
    \path (0.25,2.0) node[name=s1t] {}
(1.0,2.0) node[name=s2t] {}
(1.5,2.0) node[name=s3t] {}
(-0.0,0) node[name=s1b] {} 
(0.5,0) node[name=s2b] {}
(1.25,0) node[name=s3b] {};
\path (0.25,1.6) node[name=d1] {}
(0.5,1.2) node[name=d2] {} 
(1.0,1.2) node[name=d3] {};
\path (0.5,0.8) node[name=m1] {}
(1.0,0.8) node[name=m2] {}
(1.25,0.4) node[name=m3] {} ;
\draw[braid] (s1t) to (d1) to[out=180,in=90] (s1b);
\draw[braid] (s2t) to[out=270,in=90] (d3); 
\draw[braid] (s3t) to[out=270,in=0] (m3) to[out=180,in=270] (m2) to[out=0,in=0] (d3) ;
\draw[braid] (m1) to (s2b);
\draw[braid] (m3) to (s3b);
\path[braid, name path=y] (m2) to[out=180,in=0] (d2);
\draw[braid, name path=x] (d3) to[out=180,in=0] (m1) to[out=180,in=180] (d2) to[out=90,in=0] (d1);
\fill[white, name intersections={of=x and y}] (intersection-1) circle(0.1);
\draw[braid] (m2) to[out=180,in=0] (d2);
\draw (2.0,0.75) node {$=$};
  \end{tikzpicture}
  \begin{tikzpicture}[baseline=(s1b),every node/.style={empty}] % diag3
    \path (0.25,2.0) node[name=s1t] {}
(1.0,2.0) node[name=s2t] {}
(1.5,2.0) node[name=s3t] {}
(-0.0,0) node[name=s1b] {} 
(0.5,0) node[name=s2b] {}
(1.25,0) node[name=s3b] {};
\path (0.25,1.6) node[name=d1] {}
(0.75,0.8) node[name=d2] {} ;
\path (0.75,1.2) node[name=m1] {}
(1.25,0.4) node[name=m2] {} 
 (0.5,1.4) node[name=v1] {} 
 (1,0.6) node[name=v2] {};
\draw[braid] (s1t) to (d1) to[out=180,in=90] (s1b);
\draw[braid] (s2t) to[out=270,in=0] (m1) to[out=180,in=270] (v1) to[out=90,in=0] (d1); 
\draw[braid] (s3t) to[out=270,in=0] (m2) to[out=180,in=270] (v2) to[out=90,in=0] (d2) to[out=180,in=270] (s2b) ;
\draw[braid] (m1) to (d2);
\draw[braid] (m2) to (s3b);
\end{tikzpicture}
\end{equation*}

\noindent
where in the first equality we used naturality of the braiding, associativity
of the multiplication, and coassociativity of the comultiplication. In the
second equality we used multiplicativity of the comultiplication. The
counitality condition follows by the calculation 

\begin{equation*}
\begin{tikzpicture}[every node/.style={empty}] % diag1
    \path (0.25,1.5) node[name=s1t] {}
(1.0,1.5) node[name=s2t] {}
 (0,0) node[name=sb] {} 
(0.25,1.0) node[name=d] {} 
(0.5,0.75) node[name=v] {}
(0.75,0.5) node[name=m] {};
\draw (0.75,0.2) node[name=e] {$\circ$};
\draw[braid] (s2t) to[out=270,in=0] (m) to[out=180,in=270] (v) to[out=90,in=0] (d) to[out=180,in=90] (sb);
\draw[braid] (s1t) to (d) ;
\draw (e) to (m);
\draw (1.5,0.75) node{$=$};
\draw (1.75,0.75) node{};
\end{tikzpicture}
\begin{tikzpicture}[every node/.style={empty}] % diag2
    \path (0.25,1.5) node[name=s1t] {}
(0.75,1.5) node[name=s2t] {}
 (0,0) node[name=sb] {} 
(0.25,1.0) node[name=d] {} ;
\draw (0.5,0.2) node[name=e] {$\circ$};
\draw (0.75,0.2) node[name=e2] {$\circ$};
\draw[braid] (e) to[out=90,in=0] (d) to[out=180,in=90]  (sb);
\draw[braid] (s1t) to (d) ;
\draw (e2) to (s2t);
\draw (1.25,0.75) node{$=$};
\draw (1.75,0.75) node{};
\end{tikzpicture}
\begin{tikzpicture}[every node/.style={empty}] % diag3
    \path (0.25,1.5) node[name=s1t] {}
(0.75,1.5) node[name=s2t] {}
 (0.25,0) node[name=sb] {} ;
\draw (0.75,0.2) node[name=e] {$\circ$};
\draw[braid] (s1t) to (sb) ;
\draw (e) to (s2t);
\end{tikzpicture}
\end{equation*}
\medskip

\noindent
where the first equality uses multiplicativity of the counit and the second
one counitality of the comultiplication. 
\end{example}

Further examples, no longer unital, come from multiplier bialgebras over a
field:

\begin{example}\label{ex:multiplier_bialg}
By a {\em multiplier bialgebra} over a field $k$ we mean the structure
in \cite[Theorem 2.11]{BoGTLC:wmba}. Based on
\cite[Theorem 1.2]{Bohm:wmba_comod} and \cite[Proposition 2.6]{BoGTLC:wmba},
it can be described as follows. A multiplier bialgebra is given by a vector
space $A$ equipped with an associative but not necessarily unital
multiplication $m:A^2\to A$; which is required to be surjective and
non-degenerate in the sense that both maps $A\to \mathsf{End(A)}$, $a\mapsto
m(a.-)$ and $a\mapsto m(-.a)$ are injective. Furthermore, the existence of
linear maps $t_1,t_2:A^2\to A^2$ and $e:A\to k$ is required such that the
following axioms hold (where $b$ denotes the symmetry in $\mathsf{vec}$; of
course its components satisfy $b^2=1$).
\begin{itemize}
\item[{(a)}] $t_1(m.1)=(m.1)(b.1)(1.t_1)(b.1)(1.t_1)$; equivalently,\\
$t_2(1.m)=(1.m)(1.b)(t_2.1)(1.b)(t_2.1)$.
\item[{(b)}] Both maps $(m.1)(b.1)(1.t_1)$ and $(1.m)(1.b)(t_2.1)$ are
surjective.
\item[{(c)}] $em=e.e$.
\item[{(d)}] $(t_2.1)(1.t_1)=(1.t_1)(t_2.1)$.
\item[{(e)}] $(e.1)t_1=m=(1.e)t_2$.
\end{itemize}
We claim that $t_1$ is then a counital fusion morphism in
$\mathsf{vec}$. Indeed, by (e) and (d), 
\begin{equation}\label{eq:m_t_1-2}
(m.1)(1.t_1)=(1.m)(t_2.1). 
\end{equation}
Postcomposing \eqref{eq:m_t_1-2} with $1.e$, we obtain by (c) and (e)
$$
m[1.(1.e)t_1]=
(1.e)t_2.e=
m(1.1.e),
$$ 
from which we conclude by the non-degeneracy of $m$ that the counitality
condition $(1.e)t_1=1.e$ holds. Furthermore, 
\begin{equation}\label{eq:short_fusion}
\begin{tikzpicture}
  % diag1
  \path (1,2) node[arr,name=t1u] {$t_1$} 
(1,1) node[arr,name=t1d]  {$t_1$} 
(0,0.3) node[arr,name=t1l] {$t_1$} 
(-0.5,-0.2) node[empty,name=m] {};
\path (1.1,2.7) node[empty,name=s3t] {} 
(0.5,2.7) node[empty,name=s2t] {} 
(0,2.7) node[empty,name=s1t] {} 
(-0.5,-0.5) node[empty,name=s1b] {} 
(0.5,-0.5) node[empty,name=s2b] {} 
(1,-0.5) node[empty,name=s3b] {} 
(-0.5,2.7) node[empty,name=s0t] {};
\draw[braid] (s3t) to[out=315,in=45] (t1u);
\draw[braid] (s2t) to[out=270,in=135] (t1u);
\draw[braid] (t1u) to[out=315,in=45] (t1d);
\draw[braid] (t1d) to[out=315,in=45] (s3b);
\path[braid,name path=s4] (s1t) to[out=270,in=135] (t1d);
\draw[braid,name path=s5] (t1u) to[out=225,in=0] (t1l);
\fill[white, name intersections={of=s4 and s5}] (intersection-1) circle(0.1);
\path[braid,name path=s6] (t1d) to[out=210,in=0] (0,0.8) [out=180,in=180] (t1l);
\fill[white, name intersections={of=s6 and s5}] (intersection-1) circle(0.1);
\draw[braid,name path=s4] (s1t) to[out=270,in=135] (t1d);
\draw[braid,name path=s6] (t1d) to[out=210,in=0] (0,0.8) to[out=180,in=180] (t1l);
\draw[braid] (m) to (s1b);
\draw[braid] (s0t) to[out=270,in=180] (m) to[out=0,in=225] (t1l);
\draw[braid] (t1l) to[out=315,in=90] (s2b);
\draw (2,1.35) node {$\stackrel{\eqref{eq:m_t_1-2}}=$};
\end{tikzpicture}
\begin{tikzpicture}
  % diag2
  \path (1,2) node[arr,name=t1u] {$t_1$} 
(1,1) node[arr,name=t1d]  {$t_1$} 
(-0.5,0.3) node[arr,name=t2] {$t_2$} 
(-0.0,-0.2) node[empty,name=m] {};
\path (1.1,2.7) node[empty,name=s3t] {} 
(0.5,2.7) node[empty,name=s2t] {} 
(0,2.7) node[empty,name=s1t] {} 
(-0.5,-0.5) node[empty,name=s1b] {} 
(0.0,-0.5) node[empty,name=s2b] {} 
(1,-0.5) node[empty,name=s3b] {} 
(-0.5,2.7) node[empty,name=s0t] {};
\draw[braid] (s3t) to[out=315,in=45] (t1u);
\draw[braid] (s2t) to[out=270,in=135] (t1u);
\draw[braid] (t1u) to[out=315,in=45] (t1d);
\draw[braid] (t1d) to[out=315,in=45] (s3b);
\path[braid,name path=s4] (s1t) to[out=270,in=135] (t1d);
\draw[braid,name path=s5] (t1u) to[out=225,in=0] (m);
\fill[white, name intersections={of=s4 and s5}] (intersection-1) circle(0.1);
\path[braid,name path=s6] (t1d) to[out=210,in=45] (t2);
\fill[white, name intersections={of=s6 and s5}] (intersection-1) circle(0.1);
\draw[braid] (s1t) to[out=270,in=135] (t1d);
\draw[braid] (t1d) to[out=225,in=45] (t2);
\draw[braid] (m) to (s2b);
\draw[braid] (s0t) to[out=270,in=135] (t2);
\draw[braid] (t2) to[out=315,in=180] (m);
\draw[braid] (t2) to[out=225,in=135] (s1b);
\draw (2,1.35) node {$\stackrel{(d)}=$};
\end{tikzpicture}
\begin{tikzpicture}
  % diag3
  \path (0.75,2) node[arr,name=t1u] {$t_1$} 
(0.75,1) node[arr,name=t1d]  {$t_1$} 
(-0.25,2) node[arr,name=t2] {$t_2$} 
(-0.0,-0.2) node[empty,name=m] {};
\path (1.0,2.7) node[empty,name=s3t] {} 
(0.5,2.7) node[empty,name=s2t] {} 
(0,2.7) node[empty,name=s1t] {} 
(-0.5,-0.5) node[empty,name=s1b] {} 
(0.0,-0.5) node[empty,name=s2b] {} 
(1,-0.5) node[empty,name=s3b] {} 
(-0.5,2.7) node[empty,name=s0t] {};
\draw[braid] (s3t) to[out=315,in=45] (t1u);
\draw[braid] (s2t) to[out=270,in=135] (t1u);
\draw[braid] (t1u) to[out=315,in=45] (t1d);
\draw[braid] (t1d) to[out=300,in=80] (s3b);
\draw[braid] (s0t) to[out=225,in=135] (t2);
\draw[braid] (s1t) to[out=315,in=45] (t2);
\draw[braid] (t2) to[out=260,in=100] (s1b);
\path[braid,name path=s4] (t2) to[out=315,in=135] (t1d);
\path[braid,name path=s6] (t1d) to[out=210,in=180] (m);
\draw[braid,name path=s5] (t1u) to[out=225,in=0] (m);
\fill[white, name intersections={of=s4 and s5}] (intersection-1) circle(0.1);
\fill[white, name intersections={of=s6 and s5}] (intersection-1) circle(0.1);
\draw[braid] (t2) to[out=315,in=135] (t1d);
\draw[braid] (t1d) to[out=210,in=180] (m);
\draw[braid] (m) to (s2b);
\draw (1.6,1.35) node {$\stackrel{(a)}=$};
\end{tikzpicture}
\begin{tikzpicture}
  % diag4
  \path (0.75,1.0) node[arr,name=t1] {$t_1$} 
(0.5,1.5) node[arr,name=m]  {} 
(-0.25,2) node[arr,name=t2] {$t_2$}; 
\path (1.0,2.7) node[empty,name=s3t] {} 
(0.5,2.7) node[empty,name=s2t] {} 
(0,2.7) node[empty,name=s1t] {} 
(-0.5,-0.5) node[empty,name=s1b] {} 
(0.0,-0.5) node[empty,name=s2b] {} 
(1,-0.5) node[empty,name=s3b] {} 
(-0.5,2.7) node[empty,name=s0t] {};
\draw[braid] (s3t) to[out=270,in=45] (t1);
\draw[braid] (s2t) to[out=270,in=0] (m) to[out=180,in=315] (t2);
\draw[braid] (s0t) to[out=225,in=135] (t2);
\draw[braid] (s1t) to[out=315,in=45] (t2);
\draw[braid] (t2) to[out=260,in=100] (s1b);
\draw[braid] (m) to[out=270,in=135] (t1);
\draw[braid] (t1) to[out=225,in=90]  (s2b);
\draw[braid] (t1) to[out=280,in=80] (s3b);
\draw (1.6,1.35) node {$\stackrel{\eqref{eq:m_t_1-2}}=$};
\end{tikzpicture}
\begin{tikzpicture}
  % diag5
  \path (0.75,1.0) node[arr,name=t1d] {$t_1$} 
(0.25,2) node[arr,name=t1u] {$t_1$}
(-0.25,0.5) node[arr,name=m]  {} ;
\path (1.0,2.7) node[empty,name=s3t] {} 
(0.5,2.7) node[empty,name=s2t] {} 
(0,2.7) node[empty,name=s1t] {} 
(-0.25,-0.5) node[empty,name=s1b] {} 
(0.5,-0.5) node[empty,name=s2b] {} 
(1,-0.5) node[empty,name=s3b] {} 
(-0.5,2.7) node[empty,name=s0t] {};
\draw[braid] (s3t) to[out=270,in=45] (t1d);
\draw[braid] (s2t) to[out=270,in=45] (t1u); 
\draw[braid] (s1t) to[out=315,in=135] (t1u);
\draw[braid] (s0t) to[out=270,in=180] (m) to[out=0,in=225] (t1u);
\draw[braid] (m) to[out=270,in=90] (s1b);
\draw[braid] (t1u) to[out=315,in=135] (t1d);
\draw[braid] (t1d) to[out=250,in=100]  (s2b);
\draw[braid] (t1d) to[out=290,in=80] (s3b);
\end{tikzpicture}
\end{equation}
\noindent
from which we conclude by the non-degeneracy of $m$ that the fusion equation
$(t_1.1)t_1^{13}(1.t_1)=(1.t_1)(t_1.1)$ holds. Dually, $t_2$ is a counital
fusion morphism in $\vec\rev$.
\end{example}

\begin{proposition}\label{prop:multiplication}
For a counital fusion morphism $(t:A^2\to A^2,e:A\to I)$ in a braided monoidal
category $\C$, the following assertions hold.
\begin{itemize}
\item[{(1)}] There is an associative multiplication
$
m:=\big(
\xymatrix@C=20pt{
A^2\ar[r]^-t&
A^2\ar[r]^-{e.1}&
A}\big).
$
\item[{(2)}] $(1.m)(t.1)=t(1.m)$.
\item[{(3)}] $(m.1)(b^{-1}.1)(1.t)(b.1)(1.t)=t(m.1)$.
\item[{(4)}] $em=e.e$.
\end{itemize}
\end{proposition}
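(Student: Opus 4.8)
The plan is to prove the four assertions in the order (4), (3), (2), (1); the only ingredients are the fusion equation, the counitality condition, naturality of the braiding, and coherence (in particular the identifications $I.X\cong X\cong X.I$), and all of the manipulations are most transparent when drawn as string diagrams.

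I would dispatch (4) first. Expanding $m=(e.1)t$ gives $em=e(e.1)t$, and since $e(e.1)=e.e=e(1.e)$ by interchange together with coherence, the counitality condition $(1.e)t=1.e$ yields $em=e(1.e)t=e(1.e)=e.e$. Assertion (3) is then essentially a direct reading of the fusion equation. By functoriality of the monoidal product and the definition of $m$ one has $m.1=(e.1.1)(t.1)$, whence the left-hand side of (3) is $(e.1.1)(t.1)(b^{-1}.1)(1.t)(b.1)(1.t)$; the tail $(t.1)(b^{-1}.1)(1.t)(b.1)(1.t)$ is precisely the left-hand side of the fusion equation and so equals $(1.t)(t.1)$. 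Hence the left-hand side of (3) equals $(e.1.1)(1.t)(t.1)=t(e.1.1)(t.1)=t(m.1)$, where I used that a counit on the first tensor factor commutes past $1.t$ (interchange, reading $1_I.t$ as $t$) and then re-assembled $m.1$.

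For (2) the same strategy needs more bookkeeping. Here $(1.m)(t.1)=(1.e.1)(1.t)(t.1)$, and now the tail $(1.t)(t.1)$ is the \emph{right}-hand side of the fusion equation, so it is replaced by $(t.1)(b^{-1}.1)(1.t)(b.1)(1.t)$, leaving $(1.e.1)(t.1)(b^{-1}.1)(1.t)(b.1)(1.t)$. One then simplifies from the left: $(1.e.1)(t.1)=1.e.1$ by counitality; $(1.e.1)(b^{-1}.1)=e.1.1$ by naturality of the braiding (which carries the counit from the second tensor factor to the first); $(e.1.1)(1.t)=t(e.1.1)$ by interchange, releasing a $t$ on the left; $(e.1.1)(b.1)=1.e.1$ again by naturality; and finally $(1.e.1)(1.t)=1.m$. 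The upshot is $t(1.m)$, the right-hand side of (2). Alternatively, (2) is the image of (3) under the reversal of the monoidal product, which interchanges the two one-sided multiplicativity statements, although one then has to track how the fusion morphism itself transforms under that duality. Assertion (1) is now a formal consequence of (2): $m(1.m)=(e.1)t(1.m)=(e.1)(1.m)(t.1)$ by (2), and since $(e.1)(1.m)=m(e.1.1)$ by interchange while $(e.1.1)(t.1)=((e.1)t).1=m.1$, this equals $m(m.1)$.

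The step I expect to be the main obstacle is exactly the middle of the argument for (2) (equivalently (3)): once the fusion equation has been invoked one faces a composite of five braidings and fusion morphisms, and one must keep careful track of which tensor factor the counit occupies as it is pushed through the two braidings. As a string diagram this is entirely routine; written algebraically it demands care with the coherence isomorphisms, and exploiting the reversal duality is what spares one from carrying out the analogous computation twice.
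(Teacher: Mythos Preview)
Your proof is correct and follows essentially the same approach as the paper: (4) is the counitality condition postcomposed with $e$; (3) is the fusion equation postcomposed with $e.1.1$; (2) comes from the fusion equation together with counitality and naturality of the braiding (the paper packages your step-by-step algebraic simplification into a single commutative diagram, but the content is identical); and (1) is (2) postcomposed with $e.1$. The only difference is the order of presentation---the paper proves (2) first and deduces (1) from it immediately, then treats (3) and (4)---but the individual arguments match.
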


\begin{proof} 
In the following diagram, the top region commutes by the fusion equation and
the triangular region commutes by the counitality condition. The bottom left
region commutes by functoriality of the monoidal product and coherence of the
braiding. 
$$
\xymatrix{
A^3\ar[rrrr]^-{t.1}\ar[d]_-{1.t}&&&&
A^3
\ar[d]^-{1.t}&
\\
A^3\ar[r]^-{b.1}\ar[d]_-{1.e.1}&
A^3\ar[r]^-{1.t}&
A^3\ar[r]^-{b^{-1}.1}&
A^3\ar[r]^-{t.1}\ar[rd]_-{1.e.1}&
A^3
\ar[d]^-{1.e.1}&
\\
A^2\ar[rrrr]_-t&&&&
A^2}
$$
This proves part (2) and postcomposing both paths by $e.1$ we obtain part (1)
by functoriality of the monoidal product.
In order to prove (3), postcompose both sides of the fusion equation by
$e.1.1$ and use functoriality of the monoidal product. For (4), postcompose
both sides of the counitality condition by $e$ and use
functoriality of the monoidal product.
\end{proof}

\subsection{Comodules and right multiplier bicomonads} 
\label{sec:tric_comod}

Our approach to the study of comodules over a counital fusion morphism is
based on the use of the following notion.

\begin{definition}\label{def:left_multi_bicomonad}
A {\em right multiplier bicomonad} on a monoidal category $\C$ is
a functor \hbox{$G:\C\to \C$} equipped with natural transformations $\check
G_2:GX.GY\to G(GX.Y)$ and \hbox{$\varepsilon:GX\to X$} such that the diagrams
\begin{equation}\label{eq:G_check}
\xymatrix@C=20pt{
GX.GY.GZ\ar[r]^-{1.\check G_2}\ar[d]_-{\check G_2.1}&
GX.G(GY.Z)\ar[r]^-{\check G_2}&
G(GX.GY.Z)\ar[d]^-{G(\check G_2.1)}&
GX.GY\ar[r]^-{\check G_2}\ar[rd]_-{1.\varepsilon}&
G(GX.Y)\ar[d]^-\varepsilon\\
G(GX.Y).GZ
\ar[rr]_-{\check G_2}&&
G(G(GX.Y).Z)&&
GX.Y}
\end{equation}
commute for any objects $X,Y,Z$. 

A {\em left multiplier bicomonad} on $\C$ is a right multiplier bicomonad on
the reverse monoidal category $\C\rev$. 
\end{definition}

To a right multiplier bicomonad $G$, we can associate a natural transformation
$$ 
\xymatrix{
GX.GY\ar[r]^-{\check G_2}&
G(GX.Y)\ar[r]^-{G(\varepsilon.1)}&
G(X.Y).}
$$
It satisfies the same associativity condition as the binary part of a monoidal
functor $G$ but it has no nullary part; thus it makes $G$ what might be called
a {\em semimonoidal functor}. A right multiplier bicomonad is indeed 
a generalization of monoidal comonad (also known as {\em `bicomonad'}) as the
following example shows:

\begin{example}\label{ex:G_check_from_bicomonad}
Let $(G,\delta,\varepsilon)$ be a monoidal comonad on a monoidal category
$\C$; that is, assume the existence of a monoidal structure $(G_2:G(-).G(-)
\to G(-.-), G_0:I\to GI)$ on the functor $G$, and the monoidality of the
coassociative comultiplication $\delta:G\to G^2$ and of the counit
$\varepsilon:G\to 1$. Consider 
$$
\check G_2:=\big(
\xymatrix{
GX.GY\ar[r]^-{\delta.1}&
G^2X.GY\ar[r]^-{G_2}&
G(GX.Y)}\big)\ .
$$
Then the diagrams in \eqref{eq:G_check} commute by the coassociativity of
$\delta$, the associativity of $G_2$, and by the monoidality of $\delta$ on
one hand; and by the monoidality of $\varepsilon$ and the counitality of
$\delta$ on the other hand. 
\end{example}

Further examples are provided by the functors induced by counital fusion
morphisms:

\begin{example}\label{ex:G_check_from_t}
Let $(t:A^2\to A^2,e:A\to I)$ be a counital fusion morphism in a braided
monoidal category $\C$. Consider the functor $G:=(-).A:\C\to \C$ with the
natural transformations $\varepsilon:=1.e$ and 
$$
\check G_2:=\big(
\xymatrix@C=35pt{
X.A.Y.A\ar[r]^-{1.b.1}&
X.Y.A^2\ar[r]^-{1.1.t}&
X.Y.A^2\ar[r]^-{1.b^{-1}.1}&
X.A.Y.A}\big).
$$
Then the diagrams in \eqref{eq:G_check} commute by the fusion equation and by
the counitality condition on $t$, respectively.
\end{example}

\begin{definition}\label{def:G_check_comodule}
Consider a right multiplier bicomonad $G$ on a monoidal category $\C$. A
$G$-{\em comodule} is an object $V$ together with a natural transformation
$\check v:V.G(-)\to G(V.-)$ rendering commutative, for any objects $Y$ and
$Z$, the diagrams 
\begin{equation}\label{eq:v_check}
\xymatrix@C=20pt{
V.GY.GZ\ar[r]^-{1.\check G_2}\ar[d]_-{\check v.1}&
V.G(GY.Z)\ar[r]^-{\check v}&
G(V.GY.Z)\ar[d]^-{G(\check v.1)}&
V.GY\ar[r]^-{\check v}\ar[rd]_-{1.\varepsilon}&
G(V.Y)\ar[d]^-\varepsilon\\
G(V.Y).GZ\ar[rr]_-{\check G_2}&&
G(G(V.Y).Z)&&
V.Y.}
\end{equation}
A {\em morphism} of $G$-comodules is a morphism $f:V\to W$ such that
$G(f.1)\check v =\check w (f.1)$.

Comodules over a left multiplier bicomonad are defined as comodules over the
corresponding right multiplier bicomonad on $\C\rev$. 
\end{definition}

These comodules behave well with respect to the monoidal structure of the base
category: 

\begin{theorem}\label{thm:mon_cat_of_check_G_comodules}
Consider a right multiplier bicomonad $G$ on a monoidal category $\C$. Then
the $G$-comodules of Definition~\ref{def:G_check_comodule} and their morphisms
constitute a monoidal category such that the evident forgetful functor to
$\C$ is strict monoidal.
\end{theorem}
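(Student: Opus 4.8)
The plan is to equip the category of $G$-comodules with the tensor product inherited from $\C$, so that a pair of $G$-comodules $(V,\check v)$ and $(W,\check w)$ is sent to the object $V.W$ of $\C$ equipped with a suitable natural transformation $\widecheck{v.w}\colon (V.W).G(-)\to G(V.W.(-))$, and then to verify the two comodule axioms of \eqref{eq:v_check}, check that this gives a monoidal structure, and observe that the forgetful functor is strict monoidal essentially by construction. The obvious candidate for $\widecheck{v.w}$ at an object $Z$ is the composite
\[
\xymatrix@C=24pt{
V.W.GZ\ar[r]^-{1.\check w}&
V.G(W.Z)\ar[r]^-{\check v}&
G(V.W.Z),}
\]
that is, $\widecheck{v.w}=\check v\cdot(1.\check w)$. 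For the monoidal unit $I$ of $\C$ one takes the comodule structure $\widecheck{\mathrm{id}_I}=\check G_2$-free version, namely the canonical isomorphism $I.GY\cong GY\cong G(I.Y)$ coming from the unit constraints; since we suppress these, the structure morphism on $I$ is just the identity of $GY$ read through coherence. One should first record that this is indeed a $G$-comodule: its first axiom is a triangle identity among unit constraints together with naturality, and the counit axiom $\varepsilon\cdot\widecheck{\mathrm{id}_I}=1$ is immediate.

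The main computation is the first (associativity/coassociativity) square in \eqref{eq:v_check} for $\widecheck{v.w}$. Writing it out, one must show that the two ways around the hexagon
\[
\xymatrix@C=14pt{
V.W.GY.GZ\ar[r]\ar[d]& V.W.G(GY.Z)\ar[r]& G(V.W.GY.Z)\ar[d]\\
G(V.W.Y).GZ\ar[rr]&& G(G(V.W.Y).Z)}
\]
agree, where the left vertical and outer maps are built from $\widecheck{v.w}$ and $\check G_2$. The strategy is to substitute $\widecheck{v.w}=\check v\cdot(1.\check w)$, push the inner $\check w$'s past the various $\check v$'s and $\check G_2$'s using naturality of $\check v$ and of $\check G_2$, and then reduce the resulting diagram to two instances of the comodule axiom for $\check v$ and one instance for $\check w$ (the mixed compatibility between $\check v$ and $\check w$ being handled purely by naturality, since they act on disjoint ``slots''). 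This is the step I expect to be the main obstacle: it is a medium-sized diagram chase in which one has to be careful about which copy of $G$ is being acted on, and the bookkeeping of the nested $G$'s and of the associativity constraints (suppressed by coherence) requires attention. The counit axiom for $\widecheck{v.w}$, by contrast, is a short chase: apply $\varepsilon$ to $\check v\cdot(1.\check w)$, use naturality of $\varepsilon$ to pull it through $\check v$, and then invoke the counit axioms for $\check v$ and for $\check w$ in turn.

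It remains to check that $(V,\check v),(W,\check w)\mapsto (V.W,\widecheck{v.w})$ is functorial in morphisms of $G$-comodules (immediate from the defining condition $G(f.1)\check v=\check w(f.1)$ and naturality), that the associativity and unit constraints of $\C$ lift to morphisms of $G$-comodules (again a naturality argument, since $\widecheck{v.w}$ is assembled from $\check v$, $\check w$ and pieces of the base coherence data), and that the coherence pentagon and triangle then hold automatically because they hold in $\C$ and the forgetful functor is faithful. Since the forgetful functor sends $(V.W,\widecheck{v.w})$ to $V.W$ on the nose and sends the constraints to the constraints of $\C$, it is strict monoidal. One may also shorten several of these verifications by appealing to duality: the ``left'' version over $\C\rev$ follows formally, and some of the left/right symmetric sub-diagrams can be deduced from one another rather than proved twice.
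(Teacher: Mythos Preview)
Your proposal is correct and follows essentially the same approach as the paper: the same tensor comodule structure $\widecheck{v.w}=\check v\circ(1.\check w)$, the same unit comodule given by the unit constraints, and the same observation that the constraints of $\C$ lift. The paper's proof is much terser than yours (it leaves the verification of the comodule axioms for $V.W$ implicit), but the construction is identical; one minor slip is that the associativity axiom for $\widecheck{v.w}$ uses one instance each of the comodule axioms for $\check v$ and $\check w$ together with naturality of $\check v$, rather than two instances for $\check v$.
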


\begin{proof}
The monoidal unit $I$ is a $G$-comodule via the composite
$$
\xymatrix{
I.G(-)\ar[r]^-\cong &
G \ar[r]^-\cong &
G(I.-)}
$$
of the unit isomorphisms. For $G$-comodules $\check v:V.G(-)\to G(V.-)$ and
$\check w:W.G(-)\to G(W.-)$, also $V.W$ is a $G$-comodule via 
$$
\xymatrix{
V.W.G(-)\ar[r]^-{1.\check w}&
V.G(W.-)\ar[r]^-{\check v}&
G(V.W.-).}
$$
The monoidal product of $G$-comodule morphisms, as well as the unit and
associativity isomorphisms, are evidently morphisms of $G$-comodules.
\end{proof}

\begin{example}\label{ex:EM_comodule}
We claim that applying Definition \ref{def:G_check_comodule} to the right
multiplier bicomonad $G$ in Example \ref{ex:G_check_from_bicomonad}, we 
obtain a category of comodules which is equivalent (in fact, isomorphic) to
the usual Eilenberg-Moore category of comodules (or coalgebras). Therefore
Theorem \ref{thm:mon_cat_of_check_G_comodules} generalizes the well-known
result \cite{McCrudden} that the monoidal structure of the base category lifts
to the Eilenberg-Moore category of a monoidal comonad. 

Consider the category of $G$-comodules in Definition
\ref{def:G_check_comodule} and the forgetful functor to $\C$ sending
$(V,\check v)$ to $V$. We shall show that for a right multiplier bicomonad $G$ 
as in Example \ref{ex:G_check_from_bicomonad}, the forgetful functor $U$ has a
right adjoint $F$ such that $G=UF$ as comonads. Indeed, let $F$ take an object
$X$ to the $G$-comodule $(GX,\check G_2)$. The counit of the adjunction is
$\varepsilon: UF=G \to \C$ and the unit $\eta$ evaluated at a $G$-comodule 
$(V,\check v)$ is 
$$
\xymatrix{
V\ar[r]^-{1.G_0}&
V.GI\ar[r]^-{\check v}&
GV.}
$$
This is a morphism of $G$-comodules, in the sense of Definition
\ref{def:G_check_comodule}, by commutativity of the diagram
$$
\xymatrix{
V.GX\ar[r]^-{1.G_0.1}\ar[d]^-{1.G_0.1}\ar@/_3pc/@{=}[dd]&
V.GI.GX\ar[r]^-{\check v.1}\ar[d]^-{1.\delta.1}&
GV.GX\ar[dd]^-{\delta.1}\\
V.GI.GX\ar[r]^-{1.GG_0.1}\ar[d]^-{1.G_2}&
V.G^2I.GX\ar[d]^-{1.G_2}\\
V.GX\ar[r]^-{1.G(G_0.1)}\ar[d]_-{\check v}&
V.G(GI.X)\ar[d]^-{\check v}&
G^2V.GX\ar[d]^-{G_2}\\
G(V.X)\ar[r]_-{G(1.G_0.1)}&
G(V.GI.X)\ar[r]_-{G(\check v.1)}&
G(GV.X)}
$$
for any object $X$ of $\C$.
The region on the right commutes by the fusion equation on $\check v$; the
top-left square commutes by the monoidality of $\delta$, and the regions below
it commute by the naturality of $G_2$ and $\check v$, respectively. The
leftmost region commutes by the unitality of the monoidal structure
$(G_2,G_0)$. 
Evaluating $\eta$ at a $G$-comodule of the form $GX=(GX,\check G_2)$, we
obtain $G_2(\delta.1)(1.G_0)=\delta$ (where the equality follows by the
functoriality of the monoidal product and the unitality of $(G_2,G_0)$). Hence
the first triangle condition on the adjunction $U\dashv F$ follows by the
counitality of $\delta$. The other triangle condition holds by the counitality
of $\check v$ and the monoidality of $\varepsilon$. In order to see that the 
evidently conservative left adjoint functor $U$ is comonadic, we need to prove 
that it creates $U$-absolute equalizers. Suppose then that 
$f$ and $g$ are morphisms $(V,\check{v})\to(W,\check{w})$ and that 
$$\xymatrix{
Z \ar[r]^{h} & V \ar@<.3ex>[r]^{f} \ar@<-.3ex>[r]_{g} & W }$$
is an absolute equalizer in $\C$. Then in the solid part of the diagram 
$$\xymatrix{
Z.GX \ar[r]^{h.1} 
\ar@{-->}[d]_{\check{z}} & 
V.GX \ar@<.3ex>[r]^{f.1} \ar@<-.3ex>[r]_{g.1} 
\ar[d]_{\check{v}} & W.GX \ar[d]^{\check{w}} \\
G(Z.X) \ar[r]_{G(h.1)} & 
G(V.X) \ar@<.3ex>[r]^{G(f.1)} \ar@<-.3ex>[r]_{G(g.1)} & 
G(W.X)}$$
the rows are equalizers, and the parallel pairs commute serially with the
verticals, thus there is a unique induced morphism $\check{z}$ as
in the dashed part of the diagram. 
The axioms for $(Z,\check{z})$ to be a comodule follow easily from the
corresponding axioms for $(V,\check{v})$ and the fact that $h$ is an absolute
monomorphism. The morphism $h$ preserves the comodule structure by
construction, and the universal property of $(Z,\check{z})$ follows from the
universal property of $Z$ along with the fact that $G(h.1)$ is a
monomorphism. 

Explicitly, the inverse of the comparison functor -- from the category
of $G$-comodules in Definition \ref{def:G_check_comodule} to the category of
Eilenberg-Moore $G$-comodules -- sends an Eilenberg-Moore comodule $v:V\to GV$
to the equalizer of the comodule morphisms $\delta,Gv:(GV,\check G_2) \to
(G^2V,\check G_2)$; that is, to the comodule
$$
\xymatrix{
V.G(-) \ar[r]^-{v.1} &
GV.G(-) \ar[r]^-{G_2} &
G(V.-).} 
$$ 
\end{example}

Further examples of the situation in Definition \ref{def:G_check_comodule} are
provided by the following.

\begin{definition}\label{def:tric_comodule}
Consider a counital fusion morphism $(t:A^2\to A^2,e:A\to I)$ in a braided
monoidal category $\C$. A {\em comodule} over it is an object $V$ together
with a morphism $v:V.A\to V.A$ in $\C$ rendering commutative the following
diagrams.
\begin{equation}\label{eq:comod}
\xymatrix{
V.A^2 \ar[r]^-{1.t}\ar[d]_-{v.1}&
V.A^2 \ar[r]^-{b.1}&
A.V.A \ar[r]^-{1.v}&
A.V.A \ar[r]^-{b^{-1}.1}&
V.A^2\ar[d]^-{v.1}&
V.A\ar[r]^-v\ar[rd]_-{1.e}&
V.A\ar[d]^-{1.e}\\
V.A^2 \ar[rrrr]_-{1.t}&&&&
V.A^2&&
V.}
\end{equation}
A {\em morphism} of comodules is a morphism $f:V\to W$ in $\C$ such that 
$w(f.1)=(f.1)v$. 
\end{definition}

\begin{remark}
A comodule $v:V.A\to V.A$ as in Definition \ref{def:tric_comodule} induces a
comodule over the right multiplier bicomonad $(-).A$ in Example
\ref{ex:G_check_from_t} by putting
$$
\xymatrix{
V.X.A\ar[r]^-{b.1}&
X.V.A\ar[r]^-{1.v}&
X.V.A\ar[r]^-{b^{-1}.1}&
V.X.A.}
$$
This is in fact the object-part of a fully faithful functor from the category
of comodules for $A$ to the category of comodules for the induced comonad $G$.
It's not hard to see that the functor is injective on
objects; in many concrete cases, such as $\C=\vec$, it is an isomorphism of 
categories.
\end{remark}

Hence from Theorem \ref{thm:mon_cat_of_check_G_comodules} we have the
following.

\begin{corollary} \label{cor:mon_cat_of_tric_comodules}
Consider a counital fusion morphism $(t:A^2\to A^2,e:A\to I)$ in a braided
monoidal category $\C$. Its category of comodules, as in Definition
\ref{def:tric_comodule}, is monoidal in such a way that the evident forgetful
functor to $\C$ is strict monoidal. 
\end{corollary}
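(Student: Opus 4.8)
The plan is to deduce Corollary~\ref{cor:mon_cat_of_tric_comodules} from Theorem~\ref{thm:mon_cat_of_check_G_comodules} by transporting structure along the functor described in the Remark preceding the corollary. Concretely, take the right multiplier bicomonad $G=(-).A$ of Example~\ref{ex:G_check_from_t} (with $\varepsilon=1.e$ and $\check G_2$ the braiding-sandwiched copy of $1.t$), for which Theorem~\ref{thm:mon_cat_of_check_G_comodules} already produces a monoidal category of $G$-comodules with strict monoidal forgetful functor to $\C$. The Remark asserts that a comodule $v:V.A\to V.A$ in the sense of Definition~\ref{def:tric_comodule} gives rise to a $G$-comodule with structure map $\check v_X := (b^{-1}.1)(1.v)(b.1):V.X.A\to V.X.A$, and that this assignment is the object part of a fully faithful, injective-on-objects functor $\Phi$ from comodules over $(t,e)$ to $G$-comodules. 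So the first step is simply to make the content of that Remark precise: check that $\check v$ is natural in $X$ (immediate from naturality of $b$), that the two diagrams in~\eqref{eq:v_check} for $\check v$ follow from the two diagrams in~\eqref{eq:comod} for $v$ (the first from the fusion equation / the first diagram of~\eqref{eq:comod}, the second from counitality), and that a morphism $f:V\to W$ satisfies $w(f.1)=(f.1)v$ if and only if it satisfies $G(f.1)\check v=\check w(f.1)$ — the latter being routine once one conjugates by the braiding. Since $\Phi$ is fully faithful and injective on objects, it identifies the category of comodules over $(t,e)$ with a full replete-up-to-iso subcategory of the $G$-comodules.

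The second step is to observe that this subcategory is closed under the monoidal structure built in the proof of Theorem~\ref{thm:mon_cat_of_check_G_comodules}, and that $\Phi$ preserves it strictly. The monoidal unit there is $I$ with $G$-coaction the canonical isomorphism $I.G(-)\cong G\cong G(I.-)$; this is exactly $\Phi$ applied to the comodule $(I, v=1_A)$ (where $1_A:I.A\to I.A$ trivially satisfies~\eqref{eq:comod}). For the tensor product: given comodules $v:V.A\to V.A$ and $w:W.A\to W.A$ over $(t,e)$, one checks that $V.W$ carries a comodule structure over $(t,e)$ — the natural candidate is $(b_{V,W}^{-1}.1)(1.v)(b_{V,W}.1)\circ$ nothing, more precisely one takes the map $V.W.A\to V.W.A$ obtained by first using $w$ on the last two factors in the appropriate braided position and then $v$, i.e. $\check v_W$ composed with $1.w$ suitably braided — and that $\Phi$ of this tensor comodule is precisely the composite $\check v\circ(1.\check w)$ used to tensor $G$-comodules in the theorem. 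Verifying that this candidate map on $V.W.A$ satisfies~\eqref{eq:comod} is the one genuine computation; it follows from the two diagrams for $v$ and $w$ together with naturality and coherence of the braiding, and is essentially forced once one knows $\Phi$ is fully faithful, since the tensor $G$-comodule $\check v\circ(1.\check w)$ lies in the image of $\Phi$ on objects iff such a $v\text{-}w$ comodule exists. The associativity and unit constraints of $\C$, being comodule morphisms on the $G$-side, correspond under the fully faithful $\Phi$ to comodule morphisms over $(t,e)$, and the coherence axioms transfer automatically.

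Assembling: the category of comodules over $(t,e)$ thus acquires a monoidal structure making $\Phi$ strict monoidal, and since the forgetful functor to $\C$ on the $G$-comodule side is strict monoidal and factors the forgetful functor on the $(t,e)$-comodule side through $\Phi$, the latter is strict monoidal as well. The main obstacle, and the only place real work is needed, is pinning down the explicit tensor comodule structure $(V.W, v\ast w)$ over $(t,e)$ and checking diagram~\eqref{eq:comod} for it; everything else is transport of structure along a fully faithful embedding plus the bookkeeping of braiding coherence. Alternatively — and this avoids even that computation — one can phrase the whole argument as: the image of $\Phi$ is closed in the monoidal category of $G$-comodules under tensor and unit (because those operations, read off on structure maps, visibly have the braided-conjugate-of-a-map-on-$V.W.A$ form characterizing the image), hence inherits a monoidal structure, and $\Phi$ co-restricts to a monoidal equivalence onto it; pulling the constraints back along this equivalence and invoking that $\Phi$ is injective on objects gives the strict statement.
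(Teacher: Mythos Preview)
Your proposal is correct and follows essentially the same route as the paper: embed $(t,e)$-comodules into $G$-comodules for $G=(-).A$ via the Remark, invoke Theorem~\ref{thm:mon_cat_of_check_G_comodules}, and check closure of the image under the monoidal structure. The paper's proof is simply the terse version of your ``alternative'' paragraph---it asserts closure and records the explicit tensor-product comodule as $(b^{-1}.1)(1.v)(b.1)(1.w):V.W.A\to V.W.A$, with $I$ carrying the identity comodule structure---whereas you spend more time on the transport-of-structure scaffolding before arriving at the same point.
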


\proof
It suffices to observe that the full subcategory of $G$-comodules consisting
of the comodules for $A$ is closed under the monoidal structure. Explicitly, 
the monoidal unit $I$ is a comodule via the identity morphism $A\to A$ and the
monoidal product of comodules $v:V.A\to V.A$ and $w:W.A \to W.A$ is a comodule
via
$$
\xymatrix@C=30pt{
V.W.A\ar[r]^-{1.w}&
V.W.A\ar[r]^-{b.1}&
W.V.A\ar[r]^-{1.v}&
W.V.A\ar[r]^-{b^{-1}.1}&
V.W.A.}
$$
\endproof

\subsection{Modules and left multiplier bimonads}
\label{sec:tric_module}

In studying modules over a counital fusion morphism, we rely on the following
notion.

\begin{definition}\label{def:right_multi_bimonad}
A {\em left multiplier bimonad} on a monoidal category $\C$ is a
functor $T:\C\to \C$ equipped with a natural transformation $\hat T_2: T(X.TY)
\to TX.TY$ and a morphism $T_0:TI\to I$ such that the diagrams
\begin{equation}\label{eq:T_hat}
\xymatrix@C=20pt{
T(X.T(Y.TZ))\ar[r]^-{T(1.\hat T_2)}\ar[d]_-{\hat T_2}&
T(X.TY.TZ) \ar[r]^-{\hat T_2}&
T(X.TY).TZ\ar[d]^-{\hat T_2.1}&
T(X.TI)\ar[r]^-{\hat T_2}\ar[rd]_(.36){T(1.T_0)}&
TX.TI\ar[d]^(.56){1.T_0}\\
TX.T(Y.TZ)\ar[rr]_-{1.\hat T_2}&&
TX.TY.TZ&&
TX}
\end{equation}
commute for any objects $X,Y,Z$. 

A {\em right multiplier bimonad} on $\C$ is a left multiplier bimonad on the
reverse monoidal category $\C\rev$.
\end{definition}

To a left multiplier bimonad $T$, we can associate a natural transformation
\begin{equation}\label{eq:mu_on_T}
\mu:=\big(
\xymatrix{
T^2X\ar[r]^-{\hat T_2}&
TI.TX\ar[r]^-{T_0.1}&
TX}\big).
\end{equation}
This yields an associative but non-unital multiplication. Left 
multiplier bimonads are indeed a generalization of opmonoidal monads (also
known as {\em `bimonads'}) as the following example shows: 

\begin{example}\label{ex:T_hat_from_bimonad}
Let $(T,\mu,\eta)$ be an opmonoidal monad on a monoidal category $\C$; that
is, assume the existence of an opmonoidal structure $(T_2:
T(-.-)\to T(-).T(-), T_0:TI\to I)$ and the opmonoidality of the associative 
multiplication $\mu:T^2\to T$ and of the unit $\eta:1\to T$.
Consider 
$$
\hat T_2:=\big(
\xymatrix{
T(X.TY)\ar[r]^-{T_2}&
TX.T^2Y\ar[r]^-{1.\mu}&
TX.TY
}\big) .
$$
Then the diagrams in \eqref{eq:T_hat} commute by the associativity of
$\mu$, the coassociativity of $T_2$, and the opmonoidality of $\mu$ on one
hand, and by the opmonoidality of $\mu$ and the counitality of $T_2$ on the
other hand.
\end{example}

Further examples are provided by the functors induced by counital fusion
morphisms: 

\begin{example}\label{ex:T_hat_from_t}
Let $(t:A^2\to A^2, e:A\to I)$ be a counital fusion morphism in a braided
monoidal category $\C$. Consider the functor $T:=A.(-):\C\to \C$ with the
morphism $T_0:=e:A\to I$ and the natural transformation 
$$
\hat T_2:=\big(
\xymatrix@C=35pt{
A.X.A.Y\ar[r]^-{1.b^{-1}.1}&
A.A.X.Y\ar[r]^-{t.1.1}&
A.A.X.Y\ar[r]^-{1.b.1}&
A.X.A.Y}\big).
$$
Then the diagrams in \eqref{eq:T_hat} commute by the fusion equation and
by the counitality condition on $t$, respectively. In this example, the
multiplication \eqref{eq:mu_on_T} is induced by the multiplication in
Proposition \ref{prop:multiplication}~(1).
\end{example}

\begin{definition}\label{def:T_hat_module}
Consider a left multiplier bimonad $T$ on a monoidal category $\C$. A $T$-{\em
module} is an object $Q$ together with a natural transformation $\hat q:
T(-.Q)\to T(-).Q$ such that the diagram
$$
\xymatrix{
T(X.T(Y.Q)) \ar[r]^-{T(1.\hat q)}\ar[d]_-{\hat T_2}&
T(X.TY.Q)\ar[r]^-{\hat q}&
T(X.TY).Q\ar[d]^-{\hat T_2.1}\\
TX.T(Y.Q)\ar[rr]_-{1.\hat q}&&
TX.TY.Q}
$$
commutes for any objects $X$ and $Y$; and the morphism 
$
\xymatrix@C=20pt{
TQ\ar[r]^-{\hat q}&
TI.Q\ar[r]^-{T_0.1}&
Q}
$
--- which is in fact an associative action with respect to the multiplication
\eqref{eq:mu_on_T} --- is a split epimorphism. (Note that an associative
action $TQ\to Q$ by a {\em unital} monad $T$ is a split epimorphism if and
only if it is unital.) 

A {\em morphism} of modules from $(Q,\hat q)$ to $(R,\hat r)$ is a morphism
$Q\to R$ in $\C$ satisfying the evident compatibility condition. 

A module over a right multiplier bimonad on $\C$ is defined as a
module over the corresponding left multiplier bimonad on $\C\rev$. 
\end{definition}

\begin{theorem}\label{thm:mon_cat_of_hat_T_modules}
Consider a left multiplier bimonad $T$ on a monoidal category $\C$. Assume
that both $T_0$ and the morphisms
$$
\xymatrix{
T(TX.TY)\ar[r]^-{\hat T_2}&
T^2X.TY\ar[r]^-{\hat T_2.1}&
TI.TX.TY\ar[r]^-{T_0.1.1}&
TX.TY,}
$$
for any objects $X$ and $Y$, are split epimorphisms. Then the $T$-modules
of Definition \ref{def:T_hat_module} constitute a monoidal category such that
the evident forgetful functor to $\C$ is strict monoidal. 
\end{theorem}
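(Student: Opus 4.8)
The strategy mirrors the proof of Theorem \ref{thm:mon_cat_of_check_G_comodules} but is more delicate because of the split-epimorphism hypotheses, which are exactly what is needed to make the candidate tensor-product module structure well-defined. First I would fix notation for a chosen splitting: write $\sigma_{X,Y}$ for a section of the structure morphism $T(TX.TY)\to TX.TY$ in the statement, and $\sigma_0$ for a section of $T_0$. The plan is then to: (i) exhibit the monoidal unit as a $T$-module; (ii) given $T$-modules $(Q,\hat q)$ and $(R,\hat r)$, build a natural transformation $T(-.Q.R)\to T(-).Q.R$ and check the two module axioms; (iii) verify that the associated action on $Q.R$ is a split epimorphism; (iv) check that the unit and associativity constraints of $\C$ are module morphisms and that the tensor product of module morphisms is again a module morphism; and (v) conclude strict monoidality of the forgetful functor, which is immediate once (i)--(iv) are in place since everything is defined on underlying objects and morphisms.

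For (i): the unit $I$ carries the module structure $T(-.I)\cong T(-)\cong T(-).I$ given by the unit isomorphisms of $\C$; the module axiom is a coherence computation, and the associated action $TI\to I$ is $T_0$, which is a split epimorphism by hypothesis. For (ii), the natural candidate for the structure morphism on $Q.R$ is the composite
$$
\xymatrix@C=28pt{
T(X.Q.R)\ar[r]^-{\hat r}&
T(X.Q).R\ar[r]^-{\hat q.1}&
T(X).Q.R,}
$$
using $\hat r$ at the object $X.Q$ (regarded as the ``$X$'' slot of the $R$-module structure, after associativity) and then $\hat q$. The module axiom for this composite should follow by pasting two instances of the module axiom for $\hat q$ and $\hat r$ respectively, together with naturality of $\hat T_2$; this is the kind of diagram chase that occupies the bulk of the routine work.

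The step I expect to be the main obstacle is (iii): showing that the associated action on $Q.R$, namely
$$
\xymatrix@C=24pt{
T(Q.R)\ar[r]^-{\hat r}&
T(Q).R\ar[r]^-{\hat q.1}&
T(I).Q.R\ar[r]^-{T_0.1.1}&
Q.R,}
$$
is a split epimorphism. Unlike the unital case, one cannot simply compose unit morphisms; instead one must produce an explicit section, and this is precisely where the second split-epimorphism hypothesis of the theorem enters. The idea is to build a section of the action on $Q.R$ out of: the section $\sigma_{\bullet,\bullet}$ of $T(TX.TY)\to TX.TY$, the sections of the individual actions $TQ\to Q$ and $TR\to R$ (which exist because those actions are split epis by the definition of $T$-module), and naturality of $\hat T_2$ together with the module axioms to recombine these into a section of the composite above. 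Concretely, one first writes the action on $Q.R$ in terms of $\hat T_2$ and the two actions using the module axioms for $\hat q$ and $\hat r$, then splits each actionand the $\hat T_2$-component separately and checks that the resulting morphism is a genuine section by a diagram chase. Once (iii) is established, (iv) and (v) are formal: the constraints of $\C$ are module morphisms by naturality of $\hat q$, $\hat r$ and of the constraints, the interchange needed for the tensor of two module morphisms is naturality again, and strict monoidality of the forgetful functor holds by construction since underlying objects, morphisms, unit, and tensor product are all taken from $\C$ unchanged.
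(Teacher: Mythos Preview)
Your proposal is correct and follows essentially the same approach as the paper: the monoidal unit and tensor product are constructed exactly as you describe, the fusion axiom for the tensor product is verified by pasting the two individual module axioms, and step~(iii) is indeed the crux. The paper organizes step~(iii) slightly differently---rather than assembling an explicit section, it exhibits a commutative rectangle whose left-then-bottom path is the composite of the assumed split epimorphism $T(TP.TQ)\to TP.TQ$ with the product of the two individual actions, and whose top-then-right path ends in the action on $P.Q$; since a terminal factor of a split epimorphism is split epi, this gives the result---but this is exactly the diagram chase you outline, and the intermediate identities needed (relating $\hat T_2$, $\hat q$, $\hat p$, and $T_0$) are precisely the module axioms together with the counitality condition in~\eqref{eq:T_hat}, as you indicate.
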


\begin{proof}
The monoidal unit $I$ is a $T$-module via the composite of the unit constraints
$$
\xymatrix{
T(-.I)\ar[r]^-\cong&
T\ar[r]^-\cong& 
T(-).I.
}
$$
Indeed, the fusion equation holds by coherence (and naturality of $\hat
T_2$ and of the unit isomorphisms) and $T_0$ is a split epimorphism by
assumption. For $T$-modules $\hat q:T(-.Q)\to T(-).Q$ and $\hat p:T(-.P)\to
T(-).P$, we consider the candidate module structure 
$$
\xymatrix{
T(-.P.Q)\ar[r]^-{\hat q}&
T(-.P).Q\ar[r]^-{\hat p.1}&
T(-).P.Q.}
$$
It satisfies the fusion equation since both $\hat q$ and $\hat p$ do: for
any objects $X$ and $Y$ the diagram
$$
\xymatrix{
T(X.T(Y.P.Q))\ar[r]^-{T(1.\hat q)}\ar[ddd]_-{\hat T_2}&
T(X.T(Y.P).Q)\ar[r]^-{T(1.\hat p.1)}\ar[d]^-{\hat q}&
T(X.TY.P.Q)\ar[d]^-{\hat q}\\
&
T(X.T(Y.P)).Q\ar[r]^-{T(1.\hat p).1}\ar[dd]^-{\hat T_2.1}&
T(X.TY.P).Q\ar[d]^-{\hat p.1}\\
&&
T(X.TY).P.Q\ar[d]^-{\hat T_2.1.1}\\
TX.T(Y.P.Q)\ar[r]_-{1.\hat q}&
TX.T(Y.P).Q\ar[r]_-{1.\hat p.1}&
TX.TY.P.Q}
$$
commutes. In the diagram
$$
\xymatrix{
T(TP.TQ)\ar[r]^-{T(\hat p.1)}\ar[d]_-{\hat T_2}&
T(TI.P.TQ)\ar[r]^-{T(T_0.1.1)}&
T(P.TQ)\ar[r]^-{T(1.\hat q)}\ar[d]^-{\hat T_2}\ar@{}[rrd]|-{(\ast)}&
T(P.TI.Q)\ar[r]^-{T(1.T_0.1)}&
T(P.Q)\ar[d]^-{\hat q}\\
T^2P.TQ\ar[r]^-{T\hat p.1}\ar[d]_-{\hat T_2.1}\ar@{}[rrd]|-{(\ast)}&
T(TI.P).TQ\ar[r]^-{T(T_0.1).1}&
TP.TQ\ar[r]^-{1.\hat q}\ar[d]^-{\hat p.1}&
TP.TI.Q\ar[r]^-{1.T_0.1}&
TP.Q\ar[d]^-{\hat p.1}\\
TI.TP.TQ\ar[r]^-{1.\hat p.1}\ar[d]_-{T_0.1.1}&
TI.TI.P.TQ\ar[r]^-{1.T_0.1.1}&
TI.P.TQ\ar[d]^-{T_0.1.1}&&
TI.P.Q\ar[d]^-{T_0.1.1}\\
TP.TQ\ar[r]_-{\hat p.1}&
TI.P.TQ\ar[r]_-{T_0.1.1}&
P.TQ\ar[r]_-{1.\hat q}&
P.TI.Q\ar[r]_-{1.T_0.1}&
P.Q}
$$
the left column and the bottom row are split epimorphisms by assumption. Hence
also the top-right path is a split epimorphism proving that so is the morphism
in the right column. In this diagram the unlabelled regions commute by
naturality of $\hat T_2$ and functoriality of the monoidal product. The
regions marked by $(\ast)$ commute since for any $T$-module $\hat q:T(-.Q)\to
T(-).Q$ and any object $X$, the diagram 
$$
\xymatrix@C=35pt{
T(X.TQ)\ar[r]^-{T(1.\hat q)}\ar[dd]_-{\hat T_2}&
T(X.TI.Q)\ar[r]^-{T(1.T_0.1)}\ar[d]^-{\hat q}&
T(X.Q)\ar[d]^-{\hat q}\\
&
T(X.TI).Q\ar[r]^-{T(1.T_0).1}\ar[d]^-{\hat T_2.1}&
TX.Q\ar@{=}[d]\\
TX.TQ\ar[r]_-{1.\hat q}&
TX.TI.Q\ar[r]_-{1.T_0.1}&
TX.Q}
$$
commutes by naturality of $\hat q$, by the counitality condition in
\eqref{eq:T_hat}, and by the fusion equation on $\hat q$. 
The monoidal product of $T$-module morphisms, as well as the unit and
associativity isomorphisms are evidently morphisms of $T$-modules.
\end{proof}

\begin{example}\label{ex:EM_module}
We claim that applying Definition \ref{def:T_hat_module} to the functor
$T$ in Example \ref{ex:T_hat_from_bimonad}, we obtain a category of
modules which is equivalent (in fact, isomorphic) to the usual
Eilenberg-Moore category of modules (or algebras). Therefore Theorem
\ref{thm:mon_cat_of_hat_T_modules} generalizes the well-known result
\cite{McCrudden} that the monoidal structure of the base category lifts to the
Eilenberg-Moore category of an opmonoidal monad.

The reasoning is similar to Example \ref{ex:EM_comodule}. Consider the
category of $T$-modules in Definition \ref{def:T_hat_module} and the forgetful
functor $U$ from it to $\C$. For a left multiplier bimonad $T$ as
in Example \ref{ex:T_hat_from_bimonad}, $U$ possesses a left adjoint $F$ such
that $UF=T$ as monads: $F$ takes an object $X$ to the $T$-module $(TX,\hat
T_2)$. (It obeys the fusion equation by assumption and \eqref{eq:mu_on_T} is an
epimorphism split by $\eta$.) The unit of the adjunction is $\eta:\C\to T=UF$
and the counit, evaluated at a $T$-module $(Q,\hat q)$, is $q:=(T_0.1)\hat
q:TQ\to Q$. (This is a morphism of $T$-modules, in the sense of Definition
\ref{def:T_hat_module}, by the fusion equation on $\hat q$, the counitality of
$\hat T_2=(1.\mu)T_2$, and the naturality of $\hat q$.) Then the counit at an
object $FX=(TX,\hat T_2)$ is equal to
$$
\big(\xymatrix{
T^2X\ar[r]^-{T_2}&
TI.T^2X\ar[r]^-{1.\mu}&
TI.TX\ar[r]^-{T_0.1}&
TX}\big)=\mu
$$
hence the first triangle condition follows by the unitality of $\mu$. Since
for a $T$-module $(Q,\hat q)$, $q$ is an associative action which is epi by
assumption, the other triangle condition follows by the naturality of $\eta$,
the associativity of $q$, and the unitality of $\mu$ again:
$$
q\eta q=
q(Tq)\eta=
q\mu\eta=q.
$$
In order to see that the obviously conservative right adjoint functor $U$ is
monadic, we need to prove that it creates $U$-absolute coequalizers.
Suppose then that $f$ and $g$ are morphisms $(Q,\hat{q})\to(R,\hat{r})$, and
that 
$$\xymatrix{
Q \ar@<.3ex>[r]^{f} \ar@<-.3ex>[r]_{g} & R \ar[r]^{h} & S }$$
is an absolute coequalizer in $\C$. Then in the solid part of the diagram 
$$\xymatrix{
T(X.Q) \ar@<.3ex>[r]^{T(1.f)} \ar@<-.3ex>[r]_{T(1.g)} \ar[d]_{\hat{q}} & 
T(X.R) \ar[r]^{T(1.h)} \ar[d]_{\hat{r}} & 
T(X.S) \ar@{-->}[d]^{\hat{s}} \\
TX.Q \ar@<.3ex>[r]^{1.f} \ar@<-.3ex>[r]_{1.g} & TX.R \ar[r]^{1.h} & TX.S }$$
the rows are coequalizers, and the parallel pairs commute serially with the
verticals, thus there is a unique induced morphism $\hat{s}$ as
in the dashed part of the diagram. The axioms for $(S,\hat{s})$ to be a module
follow easily from the corresponding axioms for $(R,\hat{r})$ and the fact
that $h$ is an absolute (thus split) epimorphism. The morphism $h$ preserves
the module structure by construction, and the universal property of
$(S,\hat{s})$ follows from the universal property of $S$ and the fact that
$T(1.h)$ is an epimorphism.
Explicitly, the inverse of the comparison functor -- from the category
of $T$-modules in Definition \ref{def:T_hat_module} to the category of
Eilenberg-Moore $T$-modules -- sends an Eilenberg-Moore module $q:TQ\to Q$
to the coequalizer of the $T$-module morphisms $\mu,Tq:
(T^2Q,\hat T_2)\to (TQ,\hat T_2)$; that is, to the $T$-module
$$
\xymatrix{
T(-.Q)\ar[r]^-{T_2}&
T(-).TQ\ar[r]^-{1.q}&
T(-).Q.}
$$ 
\end{example}

Further examples of the situation in Definition \ref{def:T_hat_module} are
provided by the following.

\begin{definition}\label{def:tric_module}
Consider a counital fusion morphism $(t:A^2\to A^2,e:A\to I)$ in a braided
monoidal category $\C$. A {\em module} over it is an object $Q$ together with
a morphism $q:A.Q\to A.Q$ in $\C$ such that the diagram
\begin{equation}\label{eq:mod}
\xymatrix{
A^2.Q\ar[r]^-{1.q}\ar[d]_-{t.1}&
A^2.Q\ar[r]^-{b.1}&
A^2.Q\ar[r]^-{1.q}&
A^2.Q\ar[r]^-{b^{-1}.1}&
A^2.Q\ar[d]^-{t.1}\\
A^2.Q\ar[rrrr]_-{1.q}&&&&
A^2.Q}
\end{equation}
commutes and
$
\xymatrix@C=15pt{
A.Q\ar[r]^-q&
A.Q\ar[r]^-{e.1}&
Q}
$
is a split epimorphism.
A {\em morphism} of modules is a morphism $f:Q\to R$ in $\C$ such that
$(1.f)q=r(1.f)$. 
\end{definition}

\begin{example}\label{ex:tric_module_from_mba}
Consider a multiplier bialgebra $A$ over a field. By Example
\ref{ex:multiplier_bialg}, there is associated to it a counital fusion
morphism $t_1$ in $\mathsf{vec}$. We claim that there is an isomorphism $\Phi$
from the category of modules in Definition \ref{def:tric_module} to the
following category. The objects are vector spaces $Q$ equipped with an
associative $A$-action $q:A.Q\to Q$ which is in addition a surjective  
map. The morphisms are the linear maps which commute with the actions.

Suppose then that $q_1\colon A.Q\to A.Q$ is a module as in
Definition~\ref{def:tric_module}. Composing the fusion equation~\eqref{eq:mod} 
with $1.e.1$, and writing $q$ for the surjection $(e.1)q_1$, we see that the
diagram
\begin{equation}\label{eq:construct_q1}
\xymatrix{
A^2.Q\ar[r]^-{t_1.1}\ar[d]_-{1.q}&
A^2.Q\ar[d]^-{1.q}\\
A.Q\ar[r]_-{q_1}&
A.Q\ }
\end{equation}
commutes. Composing this with $e.1$, we see that $q$ is associative. This
defines the value on objects of a functor $\Phi$, which acts as the identity
on morphisms. 

Since $1.q$ is, like $q$, surjective, we can recover $q_1$ from $q$, and so
$\Phi$ is injective on objects. Using surjectivity of $1.q$ once again, one
deduces that $\Phi$ is full. 

Thus it remains only to show that $\Phi$ is surjective on objects. To do this,
let $q\colon A.Q\to Q$ be a surjective associative action, and use the fact
that in $\mathsf{vec}$ every surjective map --- so in particular the map
$1.q$ --- is the cokernel of its kernel. So to deduce the existence of a map
$q_1$ making the diagram in \eqref{eq:construct_q1} commute, it will suffice
to show that $(1.q)(t_1.1)$ vanishes on the kernel of $1.q$. 

By \eqref{eq:m_t_1-2} and by the associativity of $q$, 
$$
(1.q)(m.1.1)(1.t_1.1)= 
(1.q)(1.m.1)(t_2.1.1)=
(1.q)(1.1.q)(t_2.1.1)=
(1.q)(t_2.1)(1.1.q)
$$
so that $(m.1)(1.1.q)(1.t_1.1)$ vanishes on
$\mathsf{ker}(1.1.q)=A.\mathsf{ker}(1.q)$. By the non-degeneracy of
$m$ this proves that $(1.q)(t_1.1)$ vanishes on $\mathsf{ker}(1.q)$, so that
$q_1$ is indeed well-defined. Using the fact that $q$ is surjective, the
fusion equation \eqref{eq:mod} for $q_1$ follows by 
\begin{eqnarray*}
(t_1.1)q_1^{13}(1.q_1)(1.1.q)&=&
(t_1.1)q_1^{13}(1.1.q)(1.t_1.1)=
(t_1.1)(1.1.q)(t_1^{13}.1)(1.t_1.1)\\
&=&(1.1.q)(t_1.1.1)t_1^{13}(1.t_1.1)=
(1.1.q)(1.t_1.1)(t_1.1.1)\\
&=&(1.q_1)(1.1.q)(t_1.1.1)=
(1.q_1)(t_1.1)(1.1.q).
\end{eqnarray*}
In the fourth equality we used the fusion equation on $t_1$. Again by the
surjectivity of $q$,
\begin{eqnarray*}
(e.1)q_1(1.q)=
(e.1)(1.q)(t_1.1)=
q(m.1)=
q(1.q)
\end{eqnarray*}
implies $(e.1)q_1=q$ which is surjective (hence a split epimorphism) by
assumption. In the penultimate equality we used the functoriality of 
the monoidal product and axiom (e) in Example \ref{ex:multiplier_bialg}, and
in the last equality we used the associativity of $q$.
\end{example}

\begin{remark}
A module $q:A.Q\to A.Q$ as in Definition \ref{def:tric_module} induces a
module over the left multiplier bimonad $A.(-)$ in Example
\ref{ex:T_hat_from_t} by putting 
$$
\xymatrix{
A.X.Q\ar[r]^-{1.b^{-1}}&
A.Q.X\ar[r]^-{q.1}&
A.Q.X\ar[r]^-{1.b}&
A.X.Q.}
$$
Once again, this defines the object-part of a fully faithful injective
functor from the category of $A$-modules to the category of $T$-modules for
the induced monad $T$; once again this will be an isomorphism in many
concrete cases, such as $\C=\vec$. 
\end{remark}

Hence from Theorem \ref{thm:mon_cat_of_hat_T_modules} we have the following.

\begin{corollary} \label{cor:mon_cat_of_tric_modules}
Consider a counital fusion morphism $(t:A^2\to A^2,e:A\to I)$ in a braided
monoidal category $\C$. Assume that $e$ and
$$
\xymatrix{
A^3\ar[r]^-{1.t}&
A^3\ar[r]^-{b^{-1}.1}&
A^3\ar[r]^-{m.1}&
A^2}
$$
are split epimorphisms. Its category of modules, as in Definition
\ref{def:tric_module}, is monoidal in such a way that the evident forgetful
functor to $\C$ is strict monoidal. 
\end{corollary}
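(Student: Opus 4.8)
The plan is to imitate the proof of Corollary~\ref{cor:mon_cat_of_tric_comodules}: one applies Theorem~\ref{thm:mon_cat_of_hat_T_modules} to the left multiplier bimonad $T=A.(-)$ associated to $(t,e)$ in Example~\ref{ex:T_hat_from_t}, obtaining a monoidal category of $T$-modules over which the forgetful functor to $\C$ is strict monoidal, and then restricts that monoidal structure to the full subcategory of those $T$-modules which arise --- via the fully faithful functor of the Remark just above --- from modules in the sense of Definition~\ref{def:tric_module}.

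First one must check that $T=A.(-)$ meets the hypotheses of Theorem~\ref{thm:mon_cat_of_hat_T_modules}. The morphism $T_0=e$ is a split epimorphism by assumption. For the other required family, one expands the definition of $\hat T_2$ from Example~\ref{ex:T_hat_from_t} inside the composite $T(TX.TY)\to T^2X.TY\to TI.TX.TY\to TX.TY$ occurring in Theorem~\ref{thm:mon_cat_of_hat_T_modules}. Since $T(TX.TY)=A.A.X.A.Y$ carries three copies of $A$, one uses naturality of the braiding to slide $X$ and $Y$ clear of those three copies, and the identity $m=(e.1)t$ of Proposition~\ref{prop:multiplication}~(1) to collapse a $t$ followed by an $e$ into an $m$; the outcome is that this composite equals, up to coherence isomorphisms and conjugation by braidings, the morphism $(m.1)(b^{-1}.1)(1.t)\colon A^3\to A^2$ of the statement tensored with the identity of $X.Y$. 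As split epimorphisms are stable under tensoring and under conjugation by isomorphisms, the composite is then a split epimorphism for all $X$ and $Y$ exactly when $(m.1)(b^{-1}.1)(1.t)$ is, which is precisely the hypothesis of the corollary. I expect this identification --- with its careful bookkeeping of the braidings generated by the two occurrences of $\hat T_2$ --- to be the only genuinely technical step; everything else is formal.

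Granting it, Theorem~\ref{thm:mon_cat_of_hat_T_modules} provides the monoidal category of $T$-modules of Definition~\ref{def:T_hat_module}. By the preceding Remark the modules of Definition~\ref{def:tric_module} form a full subcategory, the inclusion sending $q\colon A.Q\to A.Q$ to the $T$-module
$$\xymatrix{A.X.Q\ar[r]^-{1.b^{-1}}&A.Q.X\ar[r]^-{q.1}&A.Q.X\ar[r]^-{1.b}&A.X.Q.}$$
It then remains to see that this full subcategory is closed under the monoidal structure. The monoidal unit of the $T$-modules should be the image of $(I,1_A\colon A\to A)$, which is a module in the sense of Definition~\ref{def:tric_module} because \eqref{eq:mod} holds by coherence and $(e.1)1_A=e$ is a split epimorphism by hypothesis. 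For modules $q\colon A.Q\to A.Q$ and $p\colon A.P\to A.P$, evaluating the product formula from the proof of Theorem~\ref{thm:mon_cat_of_hat_T_modules} at the unit object shows that the monoidal product of the induced $T$-modules is itself induced by
$$q'\;:=\;\big(\xymatrix@C=24pt{A.P.Q\ar[r]^-{1.b^{-1}}&A.Q.P\ar[r]^-{q.1}&A.Q.P\ar[r]^-{1.b}&A.P.Q\ar[r]^-{p.1}&A.P.Q}\big).$$
Because this $T$-module lies in the image of the fully faithful inclusion, the pair $(P.Q,q')$ is automatically a module in the sense of Definition~\ref{def:tric_module}: unwinding the defining conditions of a $T$-module at $X=Y=I$ recovers the fusion equation~\eqref{eq:mod} and the split-epimorphism condition for $q'$. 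Hence the full subcategory contains the monoidal unit and is closed under $\otimes$; the inclusion is strict monoidal, and the forgetful functor to $\C$ therefore restricts to a strict monoidal functor, as claimed.
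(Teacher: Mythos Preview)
Your proof is correct and follows the same strategy as the paper: apply Theorem~\ref{thm:mon_cat_of_hat_T_modules} to the left multiplier bimonad $T=A.(-)$ of Example~\ref{ex:T_hat_from_t} and then observe that the full subcategory of $T$-modules arising from Definition~\ref{def:tric_module} is closed under the monoidal structure. The paper's own argument is extremely terse (it merely records the explicit product module), whereas you supply the verification that the hypotheses of Theorem~\ref{thm:mon_cat_of_hat_T_modules} translate into the hypotheses of the corollary. Your formula for the product module, $(p.1)(1.b)(q.1)(1.b^{-1})$, differs superficially from the paper's $(p.1)(b^{-1}.1)(1.q)(b.1)$, but the two agree by naturality of the braiding together with the hexagon axiom, so there is no discrepancy.
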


The monoidal unit $I$ is a module via the identity morphism $A\to A$, and the
monoidal product of modules $q:A.Q\to A.Q$ and $p:A.P\to A.P$ is a module via
$$
\xymatrix{
A.P.Q\ar[r]^-{b.1}&
P.A.Q\ar[r]^-{1.q}&
P.A.Q\ar[r]^-{b^{-1}.1}&
A.P.Q\ar[r]^-{p.1}&
A.P.Q.}
$$
In particular, in view of Example \ref{ex:tric_module_from_mba} we conclude
that for a multiplier bialgebra $A$ over a field, the category of associative
$A$-modules with a surjective action is monoidal via the tensor product of
vector spaces.

\section{Multiplier bimonoids in braided monoidal categories}
\label{sec:multiplier_bimonoid} 

In this section we define the central object studied in the paper ---
{\em multiplier bimonoids} in a braided monoidal category ---
using the theory of counital fusion morphisms developed in the previous
section. We discuss their further properties like {\em regularity} and
{\em non-degeneracy} of the multiplication (in Proposition
\ref{prop:multiplication}~(1)). We show how the motivating examples ---
bimonoids in braided monoidal categories and multiplier bialgebras over a
field --- are covered by our definition. 

\subsection{Multiplier bimonoid}\label{sec:mbm} 
A multiplier bimonoid is defined in terms of a compatible pair of
fusion morphisms: 

\begin{definition} \label{def:mbm}
A {\em multiplier bimonoid} in a braided monoidal category $\C$ consists of a
fusion morphism $t_1:A^2\to A^2$ in $\C$ and a fusion morphism $t_2:A^2\to
A^2$ in $\C\rev$ possessing a common counit $e:A\to I$ such that the following
diagrams commute. 
$$
\xymatrix{
A^3\ar[r]^-{t_2.1}\ar[d]_-{1.t_1}&
A^3\ar[d]^-{1.t_1}&&
A^2\ar[r]^-{t_1}\ar[d]_-{t_2}&
A^2\ar[d]^-{e.1}\\
A^3\ar[r]_-{t_2.1}&
A^3&&
A^2\ar[r]_-{1.e}&
A}
$$
\end{definition}

The second diagram in Definition \ref{def:mbm} expresses the requirement that
the multiplications, corresponding as in Proposition
\ref{prop:multiplication}~(1) to the counital fusion morphisms $t_1$ and
$t_2$, coincide. In the axioms in Definition \ref{def:mbm} the roles of $t_1$
and $t_2$ are symmetric: $(t_1,t_2,e)$ is a multiplier bimonoid in $\C$ if and
only if $(t_2,t_1,e)$ is a multiplier bimonoid in $\C\rev$.

As the name suggests, this is a common generalization of bimonoids in braided
monoidal categories and of multiplier bialgebras over a field:

\begin{example} \label{ex:mbm_from_bimonoid}
A bimonoid $A$ in a braided monoidal category $\C$ determines a multiplier
bimonoid in $\C$ by
$$
t_1:=\big(
\xymatrix{
A^2\ar[r]^-{d.1}&
A^3\ar[r]^-{1.m}&
A^2
}\big)\qquad
t_2:=\big(
\xymatrix{
A^2\ar[r]^-{1.d}&
A^3\ar[r]^-{m.1}&
A^2
}\big) .
$$
Indeed, $t_1$ is a counital fusion morphism in $\C$ by Example
\ref{ex:bimonoid} and $t_2$ is a counital fusion morphism in $\C\rev$
by symmetry. The first diagram in Definition \ref{def:mbm} commutes by
the functoriality of the monoidal product and the coassociativity of the
comultiplication. The second diagram in Definition \ref{def:mbm} commutes by
the functoriality of the monoidal product and the counitality of the
comultiplication.
\end{example}

\begin{example} \label{ex:mbm_from_mba}
For a multiplier bialgebra over a field, the maps $t_1$ and $t_2$ in Example
\ref{ex:multiplier_bialg} constitute a multiplier bimonoid in
$\mathsf{vec}$; see axioms (d) and (e) in Example \ref{ex:multiplier_bialg}. 
\end{example}

Also a certain converse holds:

\begin{proposition} \label{prop:mba_from_mbm}
Consider a multiplier bimonoid $(t_1,t_2:A^2\to A^2,e:A\to I)$ in
$\mathsf{vec}$ and denote $m:=(e.1)t_1=(1.e)t_2$ (it is an associative
multiplication by Proposition \ref{prop:multiplication}~(1)). If
\begin{itemize}
\item $m$, $(m.1)(b.1)(1.t_1)$ and $(1.m)(1.b)(t_2.1)$ are surjective and 
\item $m$ is non-degenerate,
\end{itemize}
then $A$ is a multiplier bialgebra (in the sense recalled in Example
\ref{ex:multiplier_bialg}).
\end{proposition}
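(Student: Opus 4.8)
The plan is to verify, one by one, each of the five axioms (a)--(e) from Example~\ref{ex:multiplier_bialg}, using the hypotheses of the proposition together with the results established earlier in the excerpt. The starting observation is that $t_1$ is a counital fusion morphism in $\C=\vec$ (and $t_2$ one in $\C\rev$), so Proposition~\ref{prop:multiplication} applies to both. From Proposition~\ref{prop:multiplication}~(4) applied to $t_1$ we immediately get axiom~(c): $em=e.e$. The surjectivity requirements in the hypotheses are precisely axiom~(b). Axiom~(e) is built into our definition of $m$, namely $m=(e.1)t_1=(1.e)t_2$. The first axiom of Definition~\ref{def:mbm} is exactly axiom~(d): $(t_2.1)(1.t_1)=(1.t_1)(t_2.1)$. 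So the only substantive thing left is axiom~(a).

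For axiom~(a) I would use Proposition~\ref{prop:multiplication}~(3), which for the counital fusion morphism $t_1$ in $\vec$ (where $b^{-1}=b$) reads $(m.1)(b.1)(1.t_1)(b.1)(1.t_1)=t_1(m.1)$. Precomposing with a suitable braiding or directly rewriting, and then postcomposing with $e.1$, one recovers $t_1(m.1)=(m.1)(b.1)(1.t_1)(b.1)(1.t_1)$: indeed $(e.1)t_1(m.1) = m(m.1)$ is associativity, which is automatic, but more to the point, part~(3) already gives the equality $t_1(m.1)=(m.1)(b.1)(1.t_1)(b.1)(1.t_1)$ on the nose, since in $\vec$ we have $b^{-1}=b$. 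The ``equivalently'' clause of (a), the $t_2$-version, follows by the dual statement: $t_2$ is a counital fusion morphism in $\vec\rev$, so Proposition~\ref{prop:multiplication}~(3) applied there gives $t_2(1.m)=(1.m)(1.b)(t_2.1)(1.b)(t_2.1)$, the monoidal product in $\vec\rev$ being the opposite one. The equivalence of the two forms of (a) inside Example~\ref{ex:multiplier_bialg} was already asserted there, so it suffices to establish one of them.

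The remaining point is to check that the non-degeneracy hypothesis on $m$ in the proposition matches the non-degeneracy condition demanded in Example~\ref{ex:multiplier_bialg}; but these are literally the same statement (injectivity of $a\mapsto m(a.-)$ and $a\mapsto m(-.a)$), so nothing is needed there. Likewise the surjectivity of $m$ is assumed in both places. Hence the proof is essentially a matter of matching up the two lists of conditions and invoking Proposition~\ref{prop:multiplication}; I would present it as a short enumerated verification: (c) from \ref{prop:multiplication}~(4); (b) by hypothesis; (e) by the definition of $m$; (d) by the first axiom of Definition~\ref{def:mbm}; and (a) from \ref{prop:multiplication}~(3) for $t_1$ (and its $\C\rev$-analogue for $t_2$, which is the ``equivalently'' form).

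I do not expect a genuine obstacle here --- the content has all been front-loaded into Proposition~\ref{prop:multiplication} and Definition~\ref{def:mbm}. The one place to be careful is bookkeeping with the braiding: Proposition~\ref{prop:multiplication}~(3) is stated with $b^{-1}$ and $b$ in specific slots, and axiom~(a) is stated with a single $b$ (legitimate in $\vec$ since $b=b^{-1}$, $b^2=1$); I would make sure the slot positions ($b.1$ versus $1.b$, and which tensor factors) line up correctly when transcribing, and similarly that passing to $\vec\rev$ for the $t_2$ statement reverses the tensor order as required so that $(1.m)(1.b)(t_2.1)(1.b)(t_2.1)$ comes out with the indices as written in (a). That is purely clerical, so the proof will be brief.
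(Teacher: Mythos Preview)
Your proposal is correct and essentially identical to the paper's argument: the paper also derives (c) from Proposition~\ref{prop:multiplication}(4) and (a) by ``postcomposing the fusion equation on $t_1$ by $e.1.1$'', which is precisely how Proposition~\ref{prop:multiplication}(3) was proved, so citing (3) directly amounts to the same thing. The paper does not even bother to mention (b), (d), (e), as they are immediate from the hypotheses and Definition~\ref{def:mbm}, just as you note.
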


\begin{proof}
Axiom (c) in Example \ref{ex:multiplier_bialg} holds by
Proposition \ref{prop:multiplication}~(4).
Axiom (a) in Example \ref{ex:multiplier_bialg} follows by postcomposing the
fusion equation on $t_1$ by $e.1.1$ (or postcomposing the fusion equation on
$t_2$ by $1.1.e$). 
\end{proof}
 
Examples of multiplier bimonoids in $\mathsf{vec}$ which are not multiplier
bialgebras, however, can be obtained as linear spans of semigroups $S$ (that is,
non-unital monoids $S$ in $\mathsf{set}$). In this case the fusion maps are
given by 
$$
t_1:a. b \mapsto a.ab\qquad \qquad
t_2:a.b \mapsto ab. b
$$
on the linear basis $\{a.b\ |\ a,b\in S\}$ --- where juxtaposition
denotes the multiplication in $S$ --- and the counit is the linear map
sending any element of $S$ to the unit element of the base field. These need
not be multiplier bialgebras because the multiplication need not
be non-degenerate, and the surjectivity condition (b) of
Example~\ref{ex:multiplier_bialg} need not hold.  

Non-degeneracy of the multiplication can be formulated also in our context:

\begin{definition}\label{def:non-deg}
Consider a morphism $m:A^2\to A$ in a monoidal category. We say that $m$ is
{\em non-degenerate} if for any objects $X$ and $Y$, both maps 
\begin{eqnarray*}
&\C(X,Y.A)\to \C(X.A,Y.A),\qquad
&f\mapsto \big(
\xymatrix@C=18pt{
X.A\ar[r]^-{f.1}&
Y.A^2\ar[r]^-{1.m}&
Y.A\,}\big)\\
&\C(X,A.Y)\to \C(A.X,A.Y),\qquad
&g\mapsto \big(
\xymatrix@C=18pt{
A.X\ar[r]^-{1.g}&
A^2.Y\ar[r]^-{m.1}&
A.Y}\big)
\end{eqnarray*}
are injective.
\end{definition}

Clearly, if $m$ is a unital multiplication then it is non-degenerate.
If the multiplication of a multiplier bimonoid is non-degenerate, then some of
the axioms in Definition \ref{def:mbm} become redundant. 

\begin{remark}\label{rmk:short}
Consider morphisms $t_1,t_2:A^2\to A^2$ and $e:A\to I$ in a braided
monoidal category $\C$ such that $(t_2.1)(1.t_1)=(1.t_1)(t_2.1)$ and the
morphisms $(e.1)t_1$ and $(1.e)t_2$ are equal and non-degenerate. Observe
that, by the argument given in \eqref{eq:short_fusion},  
the fusion equation for $t_1$ follows from  the ``short'' fusion equation on
the left  in
\begin{equation}\label{eq:short-fusion}
\begin{tikzpicture}
%LHS2
  \path (7,2) node[arr,name=t1u] {$t_1$} (7,1) node[arr,name=t1d]  {$t_1$} (6,0.3) node[empty,name=m3] {};
\path (7.1,2.7) node[empty,name=l2s1t] {} (6.5,2.7) node[empty,name=l2s2t] {} (6,2.7) node[empty,name=l2s3t] {} (7,0) node[empty,name=l2s1b] {} (6,0) node[empty,name=l2s2b] {};
\draw[braid] (l2s1t) to[out=315,in=45] (t1u);
\draw[braid] (l2s2t) to[out=270,in=135] (t1u);
\draw[braid] (t1u) to[out=315,in=45] (t1d);
\draw[braid] (t1d) to[out=315,in=45] (l2s1b);
\path[braid,name path=s4] (l2s3t) to[out=270,in=135] (t1d);
\draw[braid,name path=s5] (t1u) to[out=225,in=0] (m3);
\fill[white, name intersections={of=s4 and s5}] (intersection-1) circle(0.1);
\path[braid,name path=s6] (t1d) to[out=210,in=180] (m3);
\fill[white, name intersections={of=s6 and s5}] (intersection-1) circle(0.1);
\draw[braid,name path=s4] (l2s3t) to[out=270,in=135] (t1d);
\draw[braid,name path=s6] (t1d) to[out=210,in=180] (m3);
\draw[braid] (m3) to (l2s2b);
\draw (8,1.35) node {$=$};
% RHS 2 
\path (9.6,1) node[arr,name=t1r] {$t_1$} 
(9.25,2) node[empty,name=m4] {};
\path (10.1,2.7) node[empty,name=r2s1t] {} 
(9.5,2.7) node[empty,name=r2s2t] {} 
(9,2.7) node[empty,name=r2s3t] {}
(10,0) node[empty,name=r2s1b] {} 
(9,0) node[empty,name=r2s2b] {};
\draw[braid] (r2s1t) to[out=250,in=45] (t1r);
\draw[braid] (r2s2t) to [out=270,in=0] (m4) to [out=180,in=270] (r2s3t);
\draw[braid] (m4) to[out=270,in=135] (t1r);
\draw[braid] (t1r) to[out=225,in=90] (r2s2b);
\draw[braid] (t1r) to[out=315,in=90] (r2s1b); 
\end{tikzpicture}
\qquad
\begin{tikzpicture}
%LHS1 (0,0) to (1,2.7) 
  \path (0,2) node[arr,name=t2u] {$t_2$} (0,1) node[arr,name=t2d]  {$t_2$} (1,0.3) node[empty,name=m] {};
\path (-0.1,2.7) node[empty,name=s1t] {} (0.5,2.7) node[empty,name=s2t] {} (1,2.7) node[empty,name=s3t] {} (0,0) node[empty,name=s1b] {} (1,0) node[empty,name=s2b] {};
\draw[braid] (s1t) to[out=225,in=135] (t2u);
\draw[braid] (s2t) to[out=270,in=45] (t2u);
\draw[braid] (t2u) to[out=225,in=135] (t2d);
\draw[braid] (t2d) to[out=225,in=135] (s1b);
\draw[braid,name path=s1] (s3t) to[out=270,in=45] (t2d);
\path[braid,name path=s2] (t2u) to[out=315,in=180] (m);
\fill[white, name intersections={of=s1 and s2}] (intersection-1) circle(0.1);
\draw[braid,name path=s3] (t2d) to[out=330,in=0] (m);
\fill[white, name intersections={of=s3 and s2}] (intersection-1) circle(0.1);
\draw[braid] (t2u) to[out=315,in=180] (m);
\draw[braid] (m) to (s2b);
\draw (2,1.35) node {$=$};
%RHS1 (3,0) to (4,2.7) 
\path (3.4,1) node[arr,name=t2r] {$t_2$} 
(3.75,2) node[empty,name=m2] {};
\path (2.9,2.7) node[empty,name=rs1t] {} 
(3.5,2.7) node[empty,name=rs2t] {} 
(4,2.7) node[empty,name=rs3t] {}
(3,0) node[empty,name=rs1b] {} 
(4,0) node[empty,name=rs2b] {};
\draw[braid] (rs1t) to[out=250,in=135] (t2r);
\draw[braid] (rs2t) to [out=270,in=180] (m2) to [out=0,in=270] (rs3t);
\draw[braid] (m2) to[out=270,in=45] (t2r);
\draw[braid] (t2r) to[out=315,in=90] (rs2b);
\draw[braid] (t2r) to[out=225,in=90] (rs1b); 
\end{tikzpicture}~~~~ .
\end{equation}

On the other hand the short fusion equation follows from the fusion equation
by composing with counit on the first string, thus the two equations are
equivalent. Dually, the fusion equation for $t_2$ is equivalent to its short
version appearing on the right in \eqref{eq:short-fusion}.
\end{remark}

\begin{proposition} \label{prop:mb-nd}
Consider morphisms $t_1,t_2:A^2\to A^2$ and $e:A\to I$ in a braided monoidal
category $\C$ such that $(e.1)t_1=(1.e)t_2$ and
$(t_2.1)(1.t_1)=(1.t_1)(t_2.1)$. Assume that $m:=(e.1)t_1=(1.e)t_2$ is
non-degenerate.
 
(1) The following assertions are equivalent to each other.
\begin{itemize}
\item[{(i)}] $t_1$ is a fusion morphism in $\C$.
\item[{(ii)}] $t_2$ is a fusion morphism in $\C\rev$.
\end{itemize}

(2) The following assertions are also equivalent to each other.
\begin{itemize}
\item[{(i)}] $(1.e)t_1=1.e$.
\item[{(ii)}] $(e.1)t_2=e.1$.
\item[{(iii)}] $em=e.e$.
\end{itemize}

\noindent
The datum $(t_1,t_2,e)$ is a multiplier bimonoid in $\C$, equivalently,
$(t_2,t_1,e)$ is a multiplier bimonoid in $\C\rev$, if and only if the
assertions in parts (1) and (2) hold.
\end{proposition}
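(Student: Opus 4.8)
Here is the plan. Throughout, write $m:=(e.1)t_1=(1.e)t_2$.

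The first thing I would do is record the identity \eqref{eq:m_t_1-2}, namely $(m.1)(1.t_1)=(1.m)(t_2.1)$, exactly as in Example~\ref{ex:multiplier_bialg}: from $m=(1.e)t_2$ we get $m.1=(1.e.1)(t_2.1)$, whence
$(m.1)(1.t_1)=(1.e.1)(t_2.1)(1.t_1)=(1.e.1)(1.t_1)(t_2.1)=\bigl(1.((e.1)t_1)\bigr)(t_2.1)=(1.m)(t_2.1)$,
the middle step being the commutation hypothesis $(t_2.1)(1.t_1)=(1.t_1)(t_2.1)$. This argument uses neither the braiding nor non-degeneracy, so it holds in any braided monoidal category under our hypotheses.

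For part~(2), the implication (i)$\Rightarrow$(iii) is a short coherence computation: since $e\circ(e.1)=e\circ(1.e)$ (both equal $e.e$) by coherence of the unit isomorphisms,
\[
em=e(e.1)t_1=e(1.e)t_1=e\bigl((1.e)t_1\bigr)=e(1.e)=e.e,
\]
where the penultimate equality is (i). For (iii)$\Rightarrow$(i) I would postcompose \eqref{eq:m_t_1-2} with $1.e\colon A^2\to A$: the left-hand side becomes $m\bigl(1.((1.e)t_1)\bigr)$, while the right-hand side becomes $\bigl(1.(em)\bigr)(t_2.1)$, which by (iii) equals $\bigl(1.(e.e)\bigr)(t_2.1)=m\bigl(1.(1.e)\bigr)$ (using $(1.e)t_2=m$); so $m\bigl(1.((1.e)t_1)\bigr)=m\bigl(1.(1.e)\bigr)$, and non-degeneracy of $m$ forces $(1.e)t_1=1.e$, which is (i). Finally (ii)$\Leftrightarrow$(iii) follows by applying the equivalence (i)$\Leftrightarrow$(iii) in the reverse category $\C\rev$: there $(t_2,t_1,e)$ satisfies the same hypotheses with $t_2$ in the role of $t_1$, the associated multiplication is again $m$, condition (ii) in $\C$ is condition (i) in $\C\rev$, and condition (iii) is unchanged.

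For part~(1), passing to $\C\rev$ as above it suffices to prove (i)$\Rightarrow$(ii). Assuming the fusion equation for $t_1$, postcompose it with $e.1.1$; by functoriality of the monoidal product (cf.\ the proof of Proposition~\ref{prop:multiplication}~(3)) this yields the ``short'' fusion equation $(m.1)(b^{-1}.1)(1.t_1)(b.1)(1.t_1)=t_1(m.1)$. The heart of the matter is then to transform this, using \eqref{eq:m_t_1-2}, the commutation hypothesis and the braiding axioms, into the ``short'' fusion equation for $t_2$, namely $(1.m)(1.b^{-1})(t_2.1)(1.b)(t_2.1)=t_2(1.m)$ --- a diagram chase of precisely the kind displayed in \eqref{eq:short_fusion} (repeatedly replacing a factor $(m.1)(1.t_1)$ by $(1.m)(t_2.1)$ and commuting $t_2.1$ past $1.t_1$). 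Once the short fusion equation for $t_2$ is available, Remark~\ref{rmk:short} --- whose justification, via \eqref{eq:short_fusion}, uses exactly our hypotheses, non-degeneracy included --- upgrades it to the full fusion equation for $t_2$ in $\C\rev$, which is (ii); the converse follows by the same argument run in $\C\rev$. I expect this diagram chase to be the main obstacle; the rest is bookkeeping with coherence, \eqref{eq:m_t_1-2}, and non-degeneracy. For the final assertion, unwinding Definitions~\ref{def:counital_tricocycloid} and~\ref{def:mbm} shows that $(t_1,t_2,e)$ is a multiplier bimonoid in $\C$ precisely when $t_1$ is a fusion morphism in $\C$, $t_2$ is a fusion morphism in $\C\rev$, $(1.e)t_1=1.e$, $(e.1)t_2=e.1$, $(t_2.1)(1.t_1)=(1.t_1)(t_2.1)$, and $(e.1)t_1=(1.e)t_2$; the last two being the standing hypotheses, part~(1) identifies the first condition with the second and part~(2) the third with the fourth, so $(t_1,t_2,e)$ is a multiplier bimonoid if and only if $t_1$ is a fusion morphism in $\C$ and $(1.e)t_1=1.e$, i.e.\ if and only if the (mutually equivalent) assertions of part~(1) hold together with the (mutually equivalent) assertions of part~(2).
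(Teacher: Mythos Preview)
Your treatment of part~(2) and the final assertion is correct and matches the paper's argument essentially verbatim: derive \eqref{eq:m_t_1-2} from the commutation axiom, use it together with non-degeneracy for (iii)$\Rightarrow$(i), and invoke the $\C\leftrightarrow\C\rev$ duality for (ii)$\Leftrightarrow$(iii).

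For part~(1) your strategy is right in outline but there is a gap in the step you flag as ``the main obstacle''. You propose to transform the short fusion equation for $t_1$ directly into the short fusion equation for $t_2$ using only \eqref{eq:m_t_1-2}, the commutation hypothesis and the braiding, reserving non-degeneracy for the upgrade via Remark~\ref{rmk:short}. But no such direct transformation exists: both sides of the short fusion equation for $t_1$ have $t_1$ appearing at the outermost level, and neither \eqref{eq:m_t_1-2} nor the commutation relation lets you trade a bare $t_1$ for a bare $t_2$ without an adjacent multiplication to absorb it. (The calculation \eqref{eq:short_fusion} you cite does something different: it promotes the short fusion equation for $t_1$ to the full fusion equation for $t_1$, not to anything involving $t_2$.) The paper instead shows, using only \eqref{eq:a_12} and associativity of $m$, that after tensoring with an extra copy of $A$ and multiplying it in on the appropriate side, the two short fusion equations become the \emph{same} equation between maps $A^4\to A^2$; non-degeneracy then cancels the extra factor and gives the equivalence of the short equations themselves. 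So non-degeneracy is needed twice in part~(1): once to pass between the short fusion equations for $t_1$ and $t_2$, and once (via Remark~\ref{rmk:short}) to pass between the short and full fusion equations. With this correction your plan goes through.
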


\begin{proof}
Let us again use the string notation 
\begin{equation*}
\begin{tikzpicture}[every node/.style={empty}]  % ti
\path (0,1) node[name=s1t] {}
(0.4,1) node[name=s2t] {}
(0,0) node[name=s1b] {}
(0.4,0) node[name=s2b] {}
(0.2,0.5) node[arr,name=ti] {$t_i$}
(-0.5,0.5) node {$t_i=$}
(1.5,0.5) node{} ; 
\draw[braid] (s1t) to[out=270,in=135] (ti) ;
\draw[braid] (s2t) to[out=270,in=45] (ti) ;
\draw[braid] (s1b) to[out=90,in=225] (ti) ;
\draw[braid] (s2b) to[out=90,in=315] (ti) ;
\end{tikzpicture}~~~~
\begin{tikzpicture} % m
\path (0,1) node[empty,name=s1t] {}
(1,1) node[empty,name=s2t]  {} 
(0.5,0.5) node[empty,name=m] {} 
(0.5,0) node[empty,name=sb] {}
(1.5,0.5) node{} ;
\path (-0.5,0.5) node {$m=$};
\draw[braid] (s1t) to[out=270,in=180] (m) to[out=0,in=270] (s2t);
\draw[braid] (m) to (sb) ;
\end{tikzpicture}~~~~
\begin{tikzpicture} % e
\path (0,1) node[empty,name=s1t] {}
(0,0.3) node[empty,name=e] {$\circ$} 
(0.0,0) node[empty,name=sb] {}
(1.0,0.5) node{} ;
\path (-0.5,0.5) node {$e=$};
\draw[braid] (s1t) to (e);
\end{tikzpicture}~~~~
\begin{tikzpicture} % b
\path (0,1) node[empty,name=s1t] {}
(1,1) node[empty,name=s2t] {}
(0,0) node[empty,name=s1b] {}
(1,0) node[empty,name=s2b] {}
(1.5,0.5) node{} ;
\path (-0.5,0.5) node {$b=$};
\draw[braid] (s2t) to (s1b);
\fill[white] (0.5,0.5) circle (0.1);
\draw[braid] (s1t) to (s2b);
\end{tikzpicture}
\end{equation*}

\noindent
for any $i=1,2,3,4$. We repeatedly use 
\begin{equation}\label{eq:a_12}
(m.1)(1.t_1)=(1.m)(t_2.1)
\end{equation}
which follows immediately from the axiom $(t_2.1)(1.t_1)=(1.t_1)(t_2.1)$ on
composing with $1.e.1$. 

(1) By Remark~\ref{rmk:short} it will suffice to prove the equivalence of the
short fusion equations. This follows via non-degeneracy from the following
calculations, which use \eqref{eq:a_12} and associativity.  

$$
\begin{tikzpicture} % diag1
\path (0.8,1) node[arr,name=t1] {$t_1$} 
(0.5,1.6) node[empty,name=m1] {} 
(0.3,0.4) node[empty,name=m2] {};
  \path (0,2) node[empty,name=s1t] {}
(0.4,2) node[empty,name=s2t] {}
(0.6,2) node[empty,name=s3t] {}
(1,2) node[empty,name=s4t] {}
(0.3,0) node[empty,name=s1b] {}
(1,0) node[empty,name=s2b] {};
\draw[braid] (s2t) to[out=270,in=180] (m1) to[out=0,in=270] (s3t);
\draw[braid] (m1) to[out=270,in=135] (t1);
\draw[braid] (s4t) to[out=270,in=45] (t1);
\draw[braid] (t1) to[out=225,in=0] (m2) to[out=180,in=270] (s1t);
\draw[braid] (t1) to[out=315,in=90] (s2b);
\draw[braid] (m2) to (s1b);
\draw (1.6,1) node {$=$};
\path (2,1) node {};
\end{tikzpicture}
\begin{tikzpicture} % diag2
\path (0.2,1) node[arr,name=t1] {$t_2$} 
(0.5,1.6) node[empty,name=m1] {} 
(0.8,0.4) node[empty,name=m2] {};
  \path (1,2) node[empty,name=s1t] {}
(0.6,2) node[empty,name=s2t] {}
(0.4,2) node[empty,name=s3t] {}
(0,2) node[empty,name=s4t] {}
(0.8,0) node[empty,name=s1b] {}
(0,0) node[empty,name=s2b] {};
\draw[braid] (s2t) to[out=270,in=0] (m1) to[out=180,in=270] (s3t);
\draw[braid] (m1) to[out=270,in=45] (t1);
\draw[braid] (s4t) to[out=270,in=135] (t1);
\draw[braid] (t1) to[out=315,in=180] (m2) to[out=0,in=270] (s1t);
\draw[braid] (t1) to[out=225,in=90] (s2b);
\draw[braid] (m2) to (s1b);
\end{tikzpicture}
$$

$$
\begin{tikzpicture}
  % diag1
  \path (1,2) node[arr,name=t1u] {$t_1$} (1,1) node[arr,name=t1d]  {$t_1$} (0,0.3) node[empty,name=m3] {} (-0.25,0) node[empty,name=m4] {};
\path (1.1,2.7) node[empty,name=l2s1t] {} (0.5,2.7) node[empty,name=l2s2t] {} (0,2.7) node[empty,name=l2s3t] {} (1,-0.5) node[empty,name=l2s1b] {} (-0.25,-0.5) node[empty,name=l2s2b] {} (-0.5,2.7) node[empty,name=l2s0t] {};
\draw[braid] (l2s1t) to[out=315,in=45] (t1u);
\draw[braid] (l2s2t) to[out=270,in=135] (t1u);
\draw[braid] (t1u) to[out=315,in=45] (t1d);
\draw[braid] (t1d) to[out=315,in=45] (l2s1b);
\path[braid,name path=s4] (l2s3t) to[out=270,in=135] (t1d);
\draw[braid,name path=s5] (t1u) to[out=225,in=0] (m3);
\fill[white, name intersections={of=s4 and s5}] (intersection-1) circle(0.1);
\path[braid,name path=s6] (t1d) to[out=210,in=180] (m3);
\fill[white, name intersections={of=s6 and s5}] (intersection-1) circle(0.1);
\draw[braid,name path=s4] (l2s3t) to[out=270,in=135] (t1d);
\draw[braid,name path=s6] (t1d) to[out=210,in=180] (m3);
\draw[braid] (m4) to (l2s2b);
\draw[braid] (l2s0t) to[out=270,in=180] (m4) to[out=0,in=270] (m3);
\draw (2,1.35) node {$=$};
\end{tikzpicture}
\begin{tikzpicture}
  % diag2
  \path (1,2) node[arr,name=t1u] {$t_1$} (1,1) node[arr,name=t1d]  {$t_1$} (0,0) node[empty,name=m3] {} (-0.25,0.3) node[empty,name=m4] {};
\path (1.1,2.7) node[empty,name=l2s1t] {} (0.5,2.7) node[empty,name=l2s2t] {} (0,2.7) node[empty,name=l2s3t] {} (1,-0.5) node[empty,name=l2s1b] {} (0,-0.5) node[empty,name=l2s2b] {} (-0.5,2.7) node[empty,name=l2s0t] {};
\draw[braid] (l2s1t) to[out=315,in=45] (t1u);
\draw[braid] (l2s2t) to[out=270,in=135] (t1u);
\draw[braid] (t1u) to[out=315,in=45] (t1d);
\draw[braid] (t1d) to[out=315,in=45] (l2s1b);
\path[braid,name path=s4] (l2s3t) to[out=270,in=135] (t1d);
\draw[braid,name path=s5] (t1u) to[out=225,in=0] (m3);
\fill[white, name intersections={of=s4 and s5}] (intersection-1) circle(0.1);
\path[braid,name path=s6] (t1d) to[out=210,in=0] (m4);
\fill[white, name intersections={of=s6 and s5}] (intersection-1) circle(0.1);
\draw[braid,name path=s4] (l2s3t) to[out=270,in=135] (t1d);
\draw[braid,name path=s6] (t1d) to[out=210,in=0] (m4);
\draw[braid] (m3) to (l2s2b);
\draw[braid] (l2s0t) to[out=270,in=180] (m4) to[out=270,in=180] (m3);
\draw (2,1.35) node {$=$};
\end{tikzpicture}
\begin{tikzpicture}
  %diag3b
  \path (1,2) node[arr,name=t1u] {$t_1$} 
(0,2) node[arr,name=t2]  {$t_2$} 
(0,0) node[empty,name=m2] {} 
(1,0) node[empty,name=m1] {}
(0.5,1) node[empty,name=cr] {};
\path (1.1,2.7) node[empty,name=l2s1t] {} 
(0.9,2.7) node[empty,name=l2s2t] {} 
(0.1,2.7) node[empty,name=l2s3t] {} 
(-0.1,2.7) node[empty,name=l2s0t] {};
(1,-0.5) node[empty,name=l2s1b] {} 
(0,-0.5) node[empty,name=l2s2b] {} 
\draw[braid] (l2s1t) to[out=315,in=45] (t1u);
\draw[braid] (l2s2t) to[out=225,in=135] (t1u);
\draw[braid] (l2s3t) to[out=315,in=45] (t2);
\draw[braid] (l2s0t) to[out=225,in=135] (t2);
\draw[braid] (m1) to (l2s1b);
\draw[braid] (m2) to (l2s2b);
\path[braid,draw,name path=s1] (t2) to[out=225,in=180] (m2) to[out=0,in=180] (cr) to[out=0,in=225] (t1u);
\fill[white] (cr) circle(0.1);
\path[braid,draw,name path=s2] (t2) to[out=315,in=90] (cr) to[out=270,in=180] (m1) to[out=0,in=315] (t1u);
% \fill[white, name intersections={of=s1 and s2}] (intersection-1) circle(0.25);
% \draw[braid] (t2) to[out=315,in=180] (m1) to[out=0,in=315] (t1u);
\draw (2,1.35) node {$=$};
\end{tikzpicture}
\begin{tikzpicture}
%diag4
   \path (0,2) node[arr,name=t2u] {$t_2$} (0,1) node[arr,name=t2d]  {$t_2$} (1,0.3) node[empty,name=m] {}
(1.25,0) node[empty,name=m2] {};
\path (-0.1,2.7) node[empty,name=s1t] {} 
(0.1,2.7) node[empty,name=s2t] {} 
(1,2.7) node[empty,name=s3t] {} 
(1.5,2.7) node[empty,name=s4t] {} 
(0,-0.5) node[empty,name=s1b] {} 
(1.25,-0.5) node[empty,name=s2b] {} ;
\draw[braid] (s1t) to[out=225,in=135] (t2u);
\draw[braid] (s2t) to[out=315,in=45] (t2u);
\draw[braid] (t2u) to[out=225,in=135] (t2d);
\draw[braid] (t2d) to[out=225,in=135] (s1b);
\draw[braid,name path=s1] (s3t) to[out=270,in=45] (t2d);
\path[braid,name path=s2] (t2u) to[out=315,in=180] (m);
\fill[white, name intersections={of=s1 and s2}] (intersection-1) circle(0.1);
\draw[braid,name path=s3] (t2d) to[out=330,in=0] (m);
\fill[white, name intersections={of=s3 and s2}] (intersection-1) circle(0.1);
\draw[braid] (t2u) to[out=315,in=180] (m);
\draw[braid] (m2) to (s2b);
\draw[braid] (m) to[out=270,in=180] (m2) to[out=0,in=270] (s4t);
\end{tikzpicture}
$$

(2) Condition (iii) follows immediately from (i) by composing with the
counit. The reverse implication follows from~\eqref{eq:a_12} using
non-degeneracy. The equivalence of (ii) and (iii) follows by
duality.
\end{proof}

It follows by \eqref{eq:a_12} that, in a multiplier bimonoid $(t_1,t_2,e)$
with a non-degenerate multiplication, each of the morphisms $t_1$ and $t_2$
uniquely determines the other. 

\subsection{Regular multiplier bimonoid} \label{sec:reg_mbm}
Assuming some further structure, a more symmetric notion can be introduced.

\begin{definition} \label{def:reg_mbm}
A {\em regular multiplier bimonoid} in a braided monoidal category $\C$
consists of a multiplier bimonoid $(t_1,t_2)$ in $\C$ and a multiplier
bimonoid $(t_3,t_4)$ in $\overline\C$ with a common counit $e:A\to I$ such
that the following diagrams commute.
\begin{gather*} 
\xymatrix{
A^3\ar[r]^-{b.1}\ar[d]_-{1.t_1} \ar@{}[rrd]|-{\mathrm{(A)}} &
A^3\ar[r]^-{t_3.1}&
A^3\ar[d]^-{1.m} \\
A^3\ar[r]_-{b.1}&
A^3\ar[r]_-{1.m}&
A^2
}
\xymatrix{
A^3 \ar[r]^-{1.t_1} \ar[d]_{t_4.1} \ar@{}[rd]|-{\mathrm{(B)}} & A^3 \ar[d]^{t_4.1} \\
A^3 \ar[r]_-{1.t_1} & A^3} 
\\  
\xymatrix{
A^3\ar[r]^-{1.b}\ar[d]_-{t_2.1} \ar@{}[rrd]|-{\mathrm{(A\rev)}} & A^3\ar[r]^-{1.t_4}& A^3\ar[d]^-{m.1} \\
A^3\ar[r]_-{1.b} & A^3\ar[r]_-{m.1}& A^2}
\xymatrix{
A^3 \ar[r]^-{t_2.1} \ar[d]_{1.t_3} \ar@{}[rd]|-{\mathrm{(B\rev)}} & A^3 \ar[d]^{1.t_3} \\
A^3 \ar[r]_-{t_2.1} & A^3} 
\\ 
\xymatrix{
A^2\ar[r]^-{b^{-1}}\ar[d]_-{t_3} \ar@{}[rrd]|-{\mathrm{(C)}} & A^2\ar[r]^-{t_1}&
A^2\ar[d]^-{e.1}\\
A^2\ar[rr]_-{e.1}&& A}
\end{gather*}
\end{definition}

\begin{proposition}
  Given a multiplier bimonoid $(t_1,t_2,e)$ in $\C$ and a multiplier bimonoid
  $(t_3,t_4,e)$ in $\overline{\C}$, the following conditions are equivalent: 
  \begin{enumerate}
  \item $(t_1,t_2,t_3,t_4,e)$ is a regular multiplier bimonoid in $\C$;
  \item $(t_2,t_1,t_4,t_3,e)$ is a regular multiplier bimonoid in $\C\rev$;
  \item $(t_3,t_4,t_1,t_2,e)$ is a regular multiplier bimonoid in $\overline\C$;
  \item $(t_4,t_3,t_2,t_1,e)$ is a regular multiplier bimonoid in
    $\overline\C\rev$. 
  \end{enumerate}
\end{proposition}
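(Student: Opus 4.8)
The plan is to regard each of (1)--(4) as the claim that one and the same tuple $(t_1,t_2,t_3,t_4,e)$ of morphisms in the common underlying category satisfies a certain finite list of equations, and to show the four lists have the same solutions. The operations $\C\mapsto\C\rev$ and $\C\mapsto\overline\C$ are commuting involutions (which is exactly why $\overline\C\rev=\overline{\C\rev}$), so they generate a group isomorphic to $\mathbb{Z}/2\times\mathbb{Z}/2$. It therefore suffices to verify that the notion of regular multiplier bimonoid is transported --- with the stated relabellings of the $t_i$ --- by the two generators: that $(t_1,t_2,t_3,t_4,e)$ is regular in an arbitrary braided monoidal category $\C$ iff $(t_2,t_1,t_4,t_3,e)$ is regular in $\C\rev$, and iff $(t_3,t_4,t_1,t_2,e)$ is regular in $\overline\C$. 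Composing these (applying the first with $\C$ replaced by $\overline\C$) then yields the equivalence with (4), the composite relabelling being exactly $(t_1,t_2,t_3,t_4)\mapsto(t_4,t_3,t_2,t_1)$.

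In each case the clauses asking that $(t_1,t_2,e)$ and $(t_3,t_4,e)$ be constituent multiplier bimonoids are immediate: ``$(t_1,t_2,e)$ is a multiplier bimonoid in $\C$ iff $(t_2,t_1,e)$ is one in $\C\rev$'' (and similarly for $(t_3,t_4,e)$) is the symmetry recorded after Definition~\ref{def:mbm}, while a multiplier bimonoid involves only the braiding-free data $t_1,t_2,e$ together with identities, so these clauses are literally unchanged by $\overline{(-)}$. It remains to track the five diagrams $\mathrm{(A)}$, $\mathrm{(B)}$, $\mathrm{(C)}$, $\mathrm{(A\rev)}$, $\mathrm{(B\rev)}$. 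Writing $m:=(e.1)t_1=(1.e)t_2$ and $n:=(e.1)t_3=(1.e)t_4$ for the two constituent multiplications, several translations hold verbatim, by direct inspection: passing to $\C\rev$ reflects the string-diagram picture left--right without changing any crossing, and leaves $m$ alone (since the multiplication of the $\C\rev$-fusion morphism $t_2$ is $(e.1)_{\C\rev}t_2=(1.e)_{\C}t_2=m$), so under the relabelling the $\C\rev$-form of $\mathrm{(A)}$ is the $\C$-form of $\mathrm{(A\rev)}$ and conversely, and likewise $\mathrm{(B)}$ and $\mathrm{(B\rev)}$ are interchanged; passing to $\overline\C$ exchanges $b\leftrightarrow b^{-1}$ with no reflection, so under the relabelling $\mathrm{(B)}$ and $\mathrm{(B\rev)}$ are again interchanged verbatim and $\mathrm{(C)}$ is sent to itself after cancelling a single braiding.

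That leaves exactly two nontrivial points, and the second is the main obstacle. For the $\C\rev$ statement: condition (1) contains $\mathrm{(C)}$, whereas the $\C\rev$ version contains instead the mirror equation $(1.e)t_2\,b^{-1}_{A,A}=(1.e)t_4$; but, using the constituent identities $(e.1)t_1=(1.e)t_2=m$ and $(e.1)t_3=(1.e)t_4=n$, each of these is precisely the single equation $n=m\circ b^{-1}_{A,A}$, so they are equivalent and the $\C\rev$ case closes. For the $\overline\C$ statement: under the relabelling the multiplication appearing in the axioms becomes $n$ rather than $m$, so the $\overline\C$-forms of $\mathrm{(A)}$ and $\mathrm{(A\rev)}$ are genuinely different equations from their $\C$-counterparts; the remedy is that $\mathrm{(C)}$ holds, hence $n=m\circ b^{-1}_{A,A}$, and substituting this into the $\overline\C$-forms of $\mathrm{(A)}$ and $\mathrm{(A\rev)}$ and simplifying with naturality of the braiding and the coherence hexagons should bring them back to $\mathrm{(A)}$ and $\mathrm{(A\rev)}$ of $\C$, and conversely. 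This last reduction --- disentangling the $b\leftrightarrow b^{-1}$ substitution from the insertion of $n=m\circ b^{-1}_{A,A}$ inside the axioms $\mathrm{(A)}$ and $\mathrm{(A\rev)}$ --- is the step I expect to require the most care; everything else is formal bookkeeping with the dualities.
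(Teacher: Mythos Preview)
Your approach is essentially the same as the paper's: both exploit the Klein-four group of dualities generated by $\C\mapsto\C\rev$ and $\C\mapsto\overline\C$, observe that the constituent multiplier-bimonoid clauses and the axioms $\mathrm{(B)}$, $\mathrm{(B\rev)}$ transport formally, reduce $\mathrm{(C)}$ in every variant to the single identity $n=mb^{-1}$ (equivalently $m=\overline m\,b$), and isolate $\mathrm{(A)}$ and $\mathrm{(A\rev)}$ under the $\overline{(-)}$-duality as the only step requiring a genuine manipulation of braid isomorphisms. The paper handles that last step at exactly the same level of detail you do --- it simply asserts that the transformed diagram is ``equivalent (obtained by composing both sides with various braid isomorphisms)'' --- so your identification of this as the point needing care is accurate, and nothing in your outline diverges from or falls short of the paper's argument.
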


\proof
We have seen that $(t_1,t_2,e)$ is a multiplier bimonoid in $\C$ just when
$(t_2,t_1,e)$ is a multiplier bimonoid in $\C\rev$; thus similarly
$(t_3,t_4,e)$ is a multiplier bimonoid in $\overline\C$ just when
$(t_4,t_3,e)$ is a multiplier bimonoid in $\overline\C\rev$.  Under this
duality, the axioms $\mathrm{(A)}$ and $\mathrm{(B)}$ correspond,
respectively, to the axioms $\mathrm{(A\rev)}$ and
$\mathrm{(B\rev)}$. Condition $\mathrm{(C)}$ says that the multiplication $m$
for the multiplier bimonoid $(t_1,t_2)$ is related to the multiplication
$\overline{m}$ for the multiplier bimonoid $(t_3,t_4)$ via the equation
$m=\overline{m}b$; this condition is self-dual. Thus (1) is equivalent to (2),
and (3) is equivalent to (4). In applying the $\C$-$\overline{\C}$ duality,
one must also replace $m$ by $\overline{m}=mb^{-1}$. Once again, this duality
interchanges $\mathrm{(B)}$ and $\mathrm{(B\rev)}$, and leaves $\mathrm{(C)}$
unchanged. Applied to $\mathrm{(A)}$ it gives an equivalent diagram (obtained
by composing both sides with various braid isomorphisms); the case of
$\mathrm{(A\rev)}$ is similar.  
\endproof

\begin{remark}\label{rmk:minimality}
The definition given above is in some sense a minimal one: we assume only
those axioms which will be needed to prove our results about modules and
comodules in the following sections. There are many further relationships
between the $t_i$ that follow from these in the non-degenerate case in which
we are primarily interested, and it may well be that some of these are needed
for the further development of the theory in the absence of non-degeneracy. In
particular, one might consider commutativity of diagrams such as the following
(or various dualizations). 
$$\xymatrix{
A^3 \ar[d]_{b.1} \ar[r]^-{1.t_1} & A^3 \ar[r]^-{b.1} & A^3 \ar[r]^-{1.t_1} &
A^3 \ar[d]^{t_3.1} \\ A^3 \ar[r]_{t_3.1} & A^3 \ar[rr]_-{1.t_1} && A^3 }~~~ 
\xymatrix{ 
A^3 \ar[r]^-{1.b} \ar[d]_{t_1.1} & A^3 \ar[r]^-{1.t_3} & A^3 \ar[r]^-{b.1} &
A^3 \ar[d]^-{1.t_1} \\  A^3 \ar[r]_-{1.b} & A^3 \ar[r]_-{1.t_3} & A^3
\ar[r]_{b.1} & A^3 } 
$$
\end{remark}

We now describe further simplifications which are possible in the
non-degenerate case.  

\begin{proposition}
Let $(t_1,t_2,e)$ define a multiplier bimonoid in the braided
monoidal category $\C$, and suppose that the corresponding multiplication
$m\colon A^2\to A$ is non-degenerate. Then morphism $t_3,t_4\colon A^2\to A^2$
define a regular multiplier bimonoid $(t_1,t_2,t_3,t_4,e)$ if
and only if the diagrams $\mathrm{(A)}$ and $\mathrm{(A\rev)}$ commute.  
\end{proposition}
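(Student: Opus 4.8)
The ``only if'' direction is immediate, since $\mathrm{(A)}$ and $\mathrm{(A\rev)}$ are among the axioms of Definition~\ref{def:reg_mbm}. For ``if'', assume that $\mathrm{(A)}$ and $\mathrm{(A\rev)}$ commute; we must show that $(t_3,t_4,e)$ is a multiplier bimonoid in $\overline\C$ and that $\mathrm{(B)}$, $\mathrm{(B\rev)}$ and $\mathrm{(C)}$ commute. By the $\C$--$\C\rev$ duality of the preceding proposition (which exchanges $t_1\leftrightarrow t_2$, $t_3\leftrightarrow t_4$, $\mathrm{(A)}\leftrightarrow\mathrm{(A\rev)}$, $\mathrm{(B)}\leftrightarrow\mathrm{(B\rev)}$, fixes $\mathrm{(C)}$, and carries the assertion ``$(t_3,t_4,e)$ is a multiplier bimonoid in $\overline\C$'' to itself) it is enough to prove $\mathrm{(C)}$, the commutativity of $\mathrm{(B)}$, the exchange law $(t_4.1)(1.t_3)=(1.t_3)(t_4.1)$, and that $t_3$ is a counital fusion morphism in $\overline\C$; the remaining assertions then follow by duality.

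\emph{Step 1: deriving $\mathrm{(C)}$.} Postcompose $\mathrm{(A)}$ with $e.1\colon A^2\to A$. On the left, $(e.1)(1.m)=m(e.1.1)$ and $(e.1.1)(t_3.1)=\big((e.1)t_3\big).1$, so the left composite becomes $m\big(((e.1)t_3\,b).1\big)$. On the right, naturality of the braiding rewrites $(e.1)(1.m)(b.1)$ as $m(1.e.1)$, and then the counitality $(1.e)t_1=1.e$ of $t_1$ together with $em=e.e$ (Proposition~\ref{prop:multiplication}(4)) collapses the rest to $m(1.m)=m(m.1)$, the last equality by associativity of $m$ (Proposition~\ref{prop:multiplication}(1)). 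Both composites thus have the form $m(f.1)$ with $f\colon A^2\to A$, so the case $Y=I$ of non-degeneracy of $m$ forces $(e.1)t_3\,b=m$, i.e.\ $(e.1)t_3=m\,b^{-1}=(e.1)t_1\,b^{-1}$, which is $\mathrm{(C)}$. The symmetric computation starting from $\mathrm{(A\rev)}$ gives $(1.e)t_4=m\,b^{-1}$; hence $(e.1)t_3=(1.e)t_4=:m_3=m\,b^{-1}$. As $m$ is non-degenerate and $b$ is invertible, $m_3$ is non-degenerate (its two defining conditions are, up to conjugation by braidings, those of $m$ with the two interchanged), and $e\,m_3=(e.e)b^{-1}=e.e$ by naturality of $b$.

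\emph{Step 2: the remaining relations.} Each of $\mathrm{(B)}$, the exchange law $(t_4.1)(1.t_3)=(1.t_3)(t_4.1)$, and the fusion equation for $t_3$ in $\overline\C$ is an equality of morphisms whose codomain is a tensor power $A^n$ of $A$; by non-degeneracy of $m$, such an equality holds as soon as it does after postcomposition with one copy of $m$ on a suitable pair of adjacent factors — a map $Z\to A^n$ being tested by composing $(-).1_A$ with $1^{n-1}.m$ (and using Definition~\ref{def:non-deg} with $Y=A^{n-1}$), or dually on the other side. After this postcomposition the equations $\mathrm{(A)}$ (in the form $(1.m)(t_3.1)=(1.m)(b.1)(1.t_1)(b^{-1}.1)$) and $\mathrm{(A\rev)}$, together with braiding naturality, let one replace $t_3$ and $t_4$ by $t_1$ and $t_2$, and the resulting identity follows from the relations already in force on $(t_1,t_2,e)$: the axioms of Definition~\ref{def:mbm}, the fusion equation on $t_1$, and \eqref{eq:a_12}. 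For the fusion equation on $t_3$ it is convenient to first establish its ``short'' form — the analogue in $\overline\C$ of Proposition~\ref{prop:multiplication}(3), namely $(m_3.1)(b.1)(1.t_3)(b^{-1}.1)(1.t_3)=t_3(m_3.1)$ — from $\mathrm{(C)}$ and the fusion equation on $t_1$, and then to apply Remark~\ref{rmk:short} in $\overline\C$ (legitimate once the exchange law for $t_3,t_4$ and the non-degeneracy of $m_3=(e.1)t_3=(1.e)t_4$ are available) to recover the full fusion equation. These diagram chases, in the spirit of the proofs of Proposition~\ref{prop:mb-nd} and Example~\ref{ex:multiplier_bialg}, are the main technical content of the argument.

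\emph{Step 3: assembly.} Steps 1--2 and their $\C\rev$-duals yield $\mathrm{(C)}$, $\mathrm{(B)}$, $\mathrm{(B\rev)}$, the exchange law $(t_4.1)(1.t_3)=(1.t_3)(t_4.1)$, and the fusion equations for $t_3$ in $\overline\C$ and $t_4$ in $\overline\C\rev$. The counitality $(1.e)t_3=1.e$ (and dually $(e.1)t_4=e.1$) then follows from $e\,m_3=e.e$ by Proposition~\ref{prop:mb-nd}(2) applied in $\overline\C$, whose hypotheses $(e.1)t_3=(1.e)t_4$, the exchange law, and non-degeneracy of $m_3$ are all in hand; by Proposition~\ref{prop:mb-nd} this makes $(t_3,t_4,e)$ a multiplier bimonoid in $\overline\C$. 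Together with $\mathrm{(A)}$, $\mathrm{(B)}$, $\mathrm{(A\rev)}$, $\mathrm{(B\rev)}$ and $\mathrm{(C)}$, this is exactly the data required by Definition~\ref{def:reg_mbm}. The only genuinely laborious part is Step 2: pushing the known identities on $t_1,t_2$ through the $m$-mediated equations $\mathrm{(A)}$, $\mathrm{(A\rev)}$ and then cancelling $m$ by non-degeneracy; the rest is bookkeeping with counits, braidings and associativity.
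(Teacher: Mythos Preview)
Your outline follows the paper's proof closely: derive $\mathrm{(C)}$ and the identity $(e.1)t_3=(1.e)t_4=mb^{-1}$ from $\mathrm{(A)}$ and $\mathrm{(A\rev)}$ together with non-degeneracy, then establish $\mathrm{(B)}$, $\mathrm{(B\rev)}$, the exchange law, and the short fusion equation for $t_3$ by padding with an extra $A$-string, postcomposing with $m$, using $\mathrm{(A)}$/$\mathrm{(A\rev)}$ to convert $t_3,t_4$ into $t_1,t_2$, and cancelling $m$ by non-degeneracy. Step~1 and Step~3 are essentially correct (though in Step~1 the simplification of the bottom path uses only the definition $(e.1)t_1=m$ and associativity; the counitality $(1.e)t_1=1.e$ and $em=e.e$ you cite are not actually needed there).

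There is one genuine slip in Step~2. You assert that the short fusion equation for $t_3$ follows ``from $\mathrm{(C)}$ and the fusion equation on $t_1$''. That cannot be right: $\mathrm{(C)}$ only constrains the composite $(e.1)t_3$, whereas the short fusion equation is a statement about $t_3$ itself (both outputs survive). The conversion of $t_3$ into $t_1$ requires the full axiom $\mathrm{(A)}$, in the form $(1.m)(t_3.1)=(1.m)(b.1)(1.t_1)(b^{-1}.1)$. The paper uses $\mathrm{(A)}$ three times in this step, on an expression padded by an extra string on the right, and then cancels the final $m$ by non-degeneracy. Similarly, for $\mathrm{(B)}$ the paper pads on the \emph{left} and uses $\mathrm{(A\rev)}$ to replace $t_4$ by $t_2$, reducing to the known $(t_2.1)(1.t_1)=(1.t_1)(t_2.1)$; and the exchange law $(t_4.1)(1.t_3)=(1.t_3)(t_4.1)$ is obtained by a second pass of the same technique, now using the freshly-proven $\mathrm{(B)}$ in place of the Definition~\ref{def:mbm} axiom. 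Your general description of Step~2 (using $\mathrm{(A)}$ and $\mathrm{(A\rev)}$) is compatible with this, but the specific claim about $\mathrm{(C)}$ is not, and in any case the actual diagram chases---which you yourself flag as ``the main technical content''---are not carried out.
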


\proof 
We need to prove the commutativity of $\mathrm{(B)}$, $\mathrm{(B\rev)}$, and
$\mathrm{(C)}$, as well as the fact that $(t_3,t_4,e)$ defines a multiplier
bimonoid in $\overline\C$; for the latter, we shall use
Proposition~\ref{prop:mb-nd}. 

Applying $e.1$ to either side of $\mathrm{(A)}$ and using non-degeneracy, we
deduce $\mathrm{(C)}$. Similarly, applying $1.e$ to either side of
$\mathrm{(A\rev)}$ and using non-degeneracy, we see that
$(1.e)t_4.b=(1.e)t_2=m$, and so that $(1.e)t_4=\overline{m}=(e.1)t_3$. Since
$\overline m$ is equal to $mb^{-1}$, it is non-degenerate; giving another one
of the hypotheses of Proposition~\ref{prop:mb-nd}. 

As for $\mathrm{(B)}$, we have 
\begin{equation*}
  \begin{tikzpicture}[every node/.style={empty}] % diag1
    \path (0,2) node[name=s1t] {}
    (0.4,2) node[name=s2t] {}
    (0.6,2) node[name=s3t] {}
    (1.1,2) node[name=s4t] {}
    (0.0,0) node[name=s1b] {}
    (0.9,0) node[name=s2b] {}
    (1.1,0) node[name=s3b] {} 
    (0,0.75) node[name=m] {}
    (0.5,1.25) node[arr,name=t4] {$t_4$} 
    (1,0.75) node[arr,name=t1] {$t_1$} ;
    \draw[braid] (t1) to[out=315,in=45] (s3b);
    \draw[braid] (t1) to[out=225,in=135] (s2b);
    \draw[braid] (t4) to (t1);
    \draw[braid] (s4t) to[out=315,in=45] (t1);
    \draw[braid] (s3t) to[out=315,in=45] (t4);
    \draw[braid] (s2t) to[out=225,in=135] (t4);
    \draw[braid] (s1t) to[out=270,in=180] (m) to[out=0,in=270] (t4);
    \draw[braid] (m) to (s1b);
    \draw (1.8,1) node {$=$};
    \path (2.1,1) node{};
  \end{tikzpicture}
  \begin{tikzpicture}[every node/.style={empty}] % diag2
    \path (0,2) node[name=s1t] {}
    (0.4,2) node[name=s2t] {}
    (0.6,2) node[name=s3t] {}
    (1.1,2) node[name=s4t] {}
    (0.2,0) node[name=s1b] {}
    (0.9,0) node[name=s2b] {}
    (1.1,0) node[name=s3b] {} 
    (0.6,1.25) node[name=v] {}
    (0.2,0.5) node[name=m] {}
    (0.2,1.25) node[arr,name=t2] {$t_2$} 
    (1,0.75) node[arr,name=t1] {$t_1$} ;
    \draw[braid] (t1) to[out=315,in=45] (s3b);
    \draw[braid] (t1) to[out=225,in=135] (s2b);
    \path[braid, name path=x] (t2) to (t1);
    \draw[braid] (s4t) to[out=315,in=45] (t1);
    \path[braid, name path=y] (s3t) to[out=315,in=45] (t2);
    \draw[braid, name path=z] (s2t) to[out=225,in=90] (v) to[out=270,in=0] (m) to[out=180,in=225] (t2);
\fill[white, name intersections={of=x and z}] (intersection-1) circle(0.1);
\fill[white, name intersections={of=y and z}] (intersection-1) circle(0.1);
    \draw[braid] (s1t) to[out=270,in=135] (t2); 
    \draw[braid] (t2) to (t1);
    \draw[braid] (s3t) to[out=315,in=45] (t2);
    \draw[braid] (m) to (s1b);
    \draw (1.8,1) node {$=$};
    \path (2.1,1) node{};
  \end{tikzpicture}
  \begin{tikzpicture}[every node/.style={empty}] % diag3
    \path (0,2) node[name=s1t] {}
    (0.4,2) node[name=s2t] {}
    (0.9,2) node[name=s3t] {}
    (1.1,2) node[name=s4t] {}
    (0.2,0) node[name=s1b] {}
    (0.9,0) node[name=s2b] {}
    (1.1,0) node[name=s3b] {} 
    (0.6,1.25) node[name=v] {}
    (0.2,0.5) node[name=m] {}
    (0.2,1.0) node[arr,name=t2] {$t_2$} 
    (1,1.5) node[arr,name=t1] {$t_1$} ;
    \draw[braid] (t1) to[out=315,in=45] (s3b);
    \path[braid, name path=x] (t2) to[out=315,in=90]  (s2b);
    \draw[braid] (s4t) to[out=315,in=45] (t1);
    \path[braid, name path=y] (t1) to[out=225,in=45] (t2);
    \draw[braid, name path=z] (s2t) to[out=225,in=90] (v) to[out=270,in=0] (m) to[out=180,in=225] (t2);
\fill[white, name intersections={of=x and z}] (intersection-1) circle(0.1);
\fill[white, name intersections={of=y and z}] (intersection-1) circle(0.1);
    \draw[braid] (t1) to[out=225,in=45] (t2);
    \draw[braid] (s1t) to[out=270,in=135] (t2); 
    \draw[braid] (t2) to[out=315,in=90]  (s2b);
    \draw[braid] (s3t) to[out=225,in=135] (t1);
    \draw[braid] (m) to (s1b);
    \draw (1.8,1) node {$=$};
    \path (2.1,1) node{};
  \end{tikzpicture}
  \begin{tikzpicture}[every node/.style={empty}] % diag4
    \path (0,2) node[name=s1t] {}
    (0.4,2) node[name=s2t] {}
    (0.9,2) node[name=s3t] {}
    (1.1,2) node[name=s4t] {}
    (0.0,0) node[name=s1b] {}
    (0.6,0) node[name=s2b] {}
    (1.1,0) node[name=s3b] {} 
    (0.6,1.25) node[name=v] {}
    (0.0,0.5) node[name=m] {}
    (0.5,1.0) node[arr,name=t4] {$t_4$} 
    (1,1.5) node[arr,name=t1] {$t_1$} ;
    \draw[braid] (t1) to[out=315,in=45] (s3b);
    \draw[braid] (s4t) to[out=315,in=45] (t1);
    \draw[braid] (s3t) to[out=225,in=135] (t1);
    \draw[braid] (s2t) to[out=225,in=135] (t4);
    \draw[braid] (t4) to[out=315,in=45] (s2b);
    \draw[braid] (t1) to (t4);
    \draw[braid] (s1t) to[out=250,in=180] (m) to[out=0,in=270] (t4);
    \draw[braid] (m) to (s1b);
   \end{tikzpicture}
\end{equation*}
where the first and last equalities use $\mathrm{(A\rev)}$ and the middle one
uses $(t_2.1)(1.t_1)=(1.t_1)(t_2.1)$; now $\mathrm{(B)}$ follows by
non-degeneracy, and dually $\mathrm{(B\rev)}$ also holds.

A similar (dual) argument applied to $\mathrm{(B)}$ shows that
$(t_4.1)(1.t_3)=(1.t_3)(t_4.1)$ holds; thus we are in a position to apply
Proposition~\ref{prop:mb-nd}. Now $e\overline{m}=emb^{-1}=(e.e)b^{-1}=e.e$,
and so $e$ is multiplicative with respect to the multiplication
$\overline{m}$. 

It remains to check that $t_3$ is a fusion morphism in
$\overline{\C}$; furthermore, by Remark~\ref{rmk:short}, it
suffices to check the short fusion equation. In the calculation
\begin{equation*}
\begin{tikzpicture}[every node/.style={empty}] 
  % diag1
  \path (1,2) node[arr,name=t1u] {$t_1$} 
(1,1) node[arr,name=t1d]  {$t_1$} 
(0,0.3) node[name=m1] {} 
(1,0) node[name=m2] {};
\path (1.1,2.7) node[name=s4t] {} 
(0.5,2.7) node[name=s3t] {} 
(0,2.7) node[name=s2t] {} 
(-0.5,2.7) node[name=s1t] {}
(1,-0.5) node[name=s2b] {} 
(0,-0.5) node[name=s1b] {} 
(-0.5,0.5) node[name=v] {}; 
\draw[braid] (s4t) to[out=315,in=45] (t1u);
\draw[braid] (s3t) to[out=270,in=135] (t1u);
\draw[braid] (t1u) to[out=315,in=45] (t1d);
\draw[braid] (t1d) to[out=315,in=0] (m2);
\draw[braid] (m2) to (s2b);
\draw[braid, name path=y] (m1) to (s1b);
\path[braid, name path=x] (s1t) to (v) to[out=270,in=180] (m2); 
\path[braid,name path=s4] (s2t) to[out=270,in=135] (t1d);
\draw[braid,name path=s5] (t1u) to[out=225,in=0] (m1);
\fill[white, name intersections={of=s4 and s5}] (intersection-1) circle(0.1);
\path[braid,name path=s6] (t1d) to[out=210,in=180] (m1);
\fill[white, name intersections={of=s6 and s5}] (intersection-1) circle(0.1);
\fill[white, name intersections={of=x and y}] (intersection-1) circle(0.1);
\draw[braid] (s1t) to (v) to[out=270,in=180] (m2); 
\draw[braid] (s2t) to[out=270,in=135] (t1d);
\draw[braid] (t1d) to[out=210,in=180] (m1);
%\draw[braid] (s1t) to[out=270,in=180] (m2) to[out=0,in=270] (m1);
\draw (2,1.35) node {$=$};
\end{tikzpicture}
\begin{tikzpicture}[every node/.style={empty}] 
  % diag2
  \path (1,2) node[arr,name=t1u] {$t_1$} 
(1,1) node[arr,name=t1d]  {$t_1$} 
(-0.5,0) node[name=m1] {} 
(1,0) node[name=m2] {};
\path (1.1,2.7) node[name=s4t] {} 
(0.5,2.7) node[name=s3t] {} 
(0,2.7) node[name=s2t] {} 
(-0.5,2.7) node[name=s1t] {}
(1,-0.5) node[name=s2b] {} 
(-0.5,-0.5) node[name=s1b] {} 
(-0.5,1.8) node[name=v1] {}
(0.7,0.5) node[name=v2] {}; 
\draw[braid] (s4t) to[out=315,in=45] (t1u);
\draw[braid] (s3t) to[out=270,in=135] (t1u);
\draw[braid] (t1u) to[out=315,in=45] (t1d);
\draw[braid] (t1d) to[out=315,in=0] (m2);
\draw[braid] (m2) to (s2b);
\draw[braid, name path=y] (m1) to (s1b);
\path[braid, name path=x] (s1t)to (v1) to[out=280,in=80] (v2) to[out=270,in=180] (m2); 
\path[braid,name path=s4] (s2t) to[out=270,in=135] (t1d);
\draw[braid,name path=s5] (t1u) to[out=225,in=0] (m1);
\fill[white, name intersections={of=s4 and s5}] (intersection-1) circle(0.1);
\path[braid,name path=s6] (t1d) to[out=210,in=180] (m1);
\fill[white, name intersections={of=s6 and s5}] (intersection-1) circle(0.1);
\fill[white, name intersections={of=x and s5}] (intersection-1) circle(0.1);
\draw[braid] (t1d) to[out=210,in=180] (m1);
\fill[white, name intersections={of=x and s6}] (intersection-1) circle(0.1);
\draw[braid] (s1t) to (v1) to[out=280,in=80]  (v2) to[out=240,in=180] (m2); 
\draw[braid] (s2t) to[out=270,in=135] (t1d);
\draw (2,1.35) node {$=$};
\end{tikzpicture}
\begin{tikzpicture}[every node/.style={empty}] 
  % diag3
  \path (1,2) node[arr,name=t1] {$t_1$} 
(-0.2,1.5) node[arr,name=t3]  {$t_3$} 
(-0.2,0) node[name=m1] {} 
(1,0.5) node[name=m2] {};
\path (1.1,2.7) node[name=s4t] {} 
(0.5,2.7) node[name=s3t] {} 
(0.1,2.7) node[name=s2t] {} 
(-0.5,2.7) node[name=s1t] {}
(1,-0.5) node[name=s2b] {} 
(-0.2,-0.5) node[name=s1b] {} 
(-0.5,1.8) node[name=v1] {}
(0.7,0.5) node[name=v2] {}; 
\draw[braid] (s4t) to[out=315,in=45] (t1);
\draw[braid] (s3t) to[out=270,in=135] (t1);
\path[braid, name path=v] (s1t) to[out=315,in=45] (t3);
\draw[braid, name path=u] (s2t) to[out=225,in=135] (t3);
\fill[white, name intersections={of=u and v}] (intersection-1) circle(0.1);
\draw[braid] (s1t) to[out=315,in=45] (t3);
\path[braid,name path=s4] (t3) to[out=315,in=180] (m2);
\draw[braid,name path=s5] (t1) to[out=225,in=0] (m1);
\fill[white, name intersections={of=s4 and s5}] (intersection-1) circle(0.1);
\draw[braid,name path=s4] (t3) to[out=315,in=180] (m2);
\draw[braid] (t1) to[out=315,in=0] (m2);
\draw[braid] (t3) to[out=225,in=180] (m1);
\draw[braid] (m2) to (s2b);
\draw[braid] (m1) to (s1b);
\draw (2,1.35) node {$=$};
\end{tikzpicture}
\begin{tikzpicture}[every node/.style={empty}] 
  % diag4
  \path (0.4,0.9) node[arr,name=t3d] {$t_3$} 
(-0.2,1.8) node[arr,name=t3]  {$t_3$} 
(-0.2,0) node[name=m1] {} 
(1,0.5) node[name=m2] {};
\path (1.1,2.7) node[name=s4t] {} 
(0.8,2.7) node[name=s3t] {} 
(0.1,2.7) node[name=s2t] {} 
(-0.5,2.7) node[name=s1t] {}
(1,-0.5) node[name=s2b] {} 
(-0.2,-0.5) node[name=s1b] {} 
(-0.5,1.8) node[name=v1] {}
(0.7,0.5) node[name=v2] {}; 
\path[braid, name path=v] (s1t) to[out=315,in=45] (t3);
\draw[braid, name path=u] (s2t) to[out=225,in=135] (t3);
\fill[white, name intersections={of=u and v}] (intersection-1) circle(0.1);
\draw[braid] (s1t) to[out=315,in=45] (t3);
\path[braid, name path=w] (t3) to[out=315,in=45] (t3d);
\draw[braid, name path=x] (s3t) to[out=245,in=135] (t3d);
\fill[white, name intersections={of=w and x}] (intersection-1) circle(0.1);
\draw[braid] (t3) to[out=315,in=45] (t3d);
\draw[braid] (t3) to[out=225,in=180] (m1) to[out=0,in=225] (t3d);
\draw[braid,name path=s4] (t3d) to[out=315,in=180] (m2) to[out=0,in=280] (s4t);
\draw[braid] (m2) to (s2b);
\draw[braid] (m1) to (s1b);
\end{tikzpicture}
\end{equation*}
the first equality holds by naturality of the braiding, and the second and
third by $\mathrm{(A)}$; while 
\begin{equation*}
\begin{tikzpicture}[every node/.style={empty}] 
  % diag 1
  \path (1,1.2) node[arr,name=t1] {$t_1$} 
(0.5,1.6) node[name=m1] {} 
(1,0.5) node[name=m2] {};
\path (1.1,2.0) node[name=s4t] {} 
(0.7,2.0) node[name=s3t] {} 
(0.3,2.0) node[name=s2t] {} 
(0,2.0) node[name=s1t] {}
(1,0) node[name=s2b] {} 
(0,0) node[name=s1b] {};
\draw[braid] (s2t) to[out=270,in=180] (m1) to[out=0,in=270] (s3t);
\draw[braid] (m1) to[out=270,in=135] (t1);
\draw[braid] (s4t) to[out=300,in=45] (t1);
\path[braid,name path=u] (s1t) to[out=270,in=180] (m2);
\draw[braid,name path=v] (t1) to[out=225,in=90] (s1b);
\fill[white, name intersections={of=u and v}] (intersection-1) circle(0.1);
\draw[braid] (s1t) to[out=270,in=180] (m2) to[out=0,in=315] (t1);
\draw[braid] (m2) to (s2b);
\draw (2,1.35) node {$=$};
\end{tikzpicture}
\begin{tikzpicture}[every node/.style={empty}] 
  % diag 2
  \path (0,0.8) node[arr,name=t3] {$t_3$} 
(0.5,1.6) node[name=m1] {} 
(1,0.3) node[name=m2] {};
\path (1.1,2.0) node[name=s4t] {} 
(0.7,2.0) node[name=s3t] {} 
(0.3,2.0) node[name=s2t] {} 
(0,2.0) node[name=s1t] {}
(1,0) node[name=s2b] {} 
(0,0) node[name=s1b] {};
\draw[braid] (s2t) to[out=270,in=180] (m1) to[out=0,in=270] (s3t);
\draw[braid,name path=u] (m1) to[out=270,in=135] (t3);
\draw[braid] (s4t) to[out=300,in=0] (m2) to[out=180,in=315] (t3);
\path[braid,name path=v] (s1t) to[out=270,in=45] (t3);
\draw[braid] (t3) to[out=225,in=90] (s1b);
\fill[white, name intersections={of=u and v}] (intersection-1) circle(0.1);
\draw[braid] (s1t) to[out=270,in=45] (t3);
\draw[braid] (m2) to (s2b);
\end{tikzpicture}
  \end{equation*}
holds by $\mathrm{(A)}$ once again. Since the left hand sides of the two
displayed calculations agree by the short fusion equation for
$t_1$, the right hand sides must also agree. By non-degeneracy we may cancel
the right-most input strings, and finally composing with suitably chosen braid
isomorphisms gives  
\begin{equation*}
  \begin{tikzpicture}[every node/.style={empty}] 
    %LHS2
  \path (7,2) node[arr,name=t1u] {$t_3$} 
(7,1) node[arr,name=t1d]  {$t_3$} 
(6,0.3) node[empty,name=m3] {};
\path (7.1,2.7) node[empty,name=l2s1t] {} 
(6.5,2.7) node[empty,name=l2s2t] {} 
(6,2.7) node[empty,name=l2s3t] {} 
(7,0) node[empty,name=l2s1b] {} 
(6,0) node[empty,name=l2s2b] {};
\draw[braid] (l2s1t) to[out=315,in=45] (t1u);
\draw[braid] (l2s2t) to[out=270,in=135] (t1u);
\draw[braid] (t1u) to[out=315,in=45] (t1d);
\draw[braid] (t1d) to[out=315,in=45] (l2s1b);
\draw[braid,name path=s4] (l2s3t) to[out=270,in=135] (t1d);
\path[braid,name path=s5] (t1u) to[out=225,in=180] (m3);
\fill[white, name intersections={of=s4 and s5}] (intersection-1) circle(0.1);
\draw[braid] (t1u) to[out=225,in=180] (m3);
\draw[braid,name path=s6] (t1d) to[out=210,in=0] (m3);
\draw[braid] (m3) to (l2s2b);
\draw (8,1.35) node {$=$};
% RHS 2 
\path (9.6,1) node[arr,name=t1r] {$t_3$} 
(9.25,1.5) node[empty,name=m4] {};
\path (10.1,2.7) node[empty,name=r2s1t] {} 
(9.5,2.7) node[empty,name=r2s2t] {} 
(9,2.7) node[empty,name=r2s3t] {}
(10,0) node[empty,name=r2s1b] {} 
(9,0) node[empty,name=r2s2b] {};
\draw[braid] (r2s1t) to[out=250,in=45] (t1r);
\path[braid,name path=y] (r2s2t) to [out=225,in=180] (m4);
\draw[braid,name path=z] (r2s3t)  to [out=315,in=0] (m4);
\fill[white, name intersections={of=y and z}] (intersection-1) circle(0.1);
\draw[braid] (r2s2t) to [out=225,in=180] (m4);
\draw[braid] (m4) to[out=270,in=135] (t1r);
\draw[braid] (t1r) to[out=225,in=90] (r2s2b);
\draw[braid] (t1r) to[out=315,in=90] (r2s1b); 
  \end{tikzpicture}
\end{equation*}
which is the short fusion equation for $t_3$. \endproof

Just as for multiplier bialgebras over fields, for a regular
multiplier bimonoid $(t_1,t_2,t_3,t_4)$, any one of the maps
$t_1$, $t_2$, $t_3$, or $t_4$ determines each of the others whenever the
multiplication is non-degenerate; cf. axioms (A) and (A$\rev$) in Definition
\ref{def:reg_mbm}.

From \cite[Theorem~1.2]{Bohm:wmba_comod} we immediately obtain the following. 

\begin{example}\label{ex:reg_mbm_in_vec}
A multiplier bialgebra over a field is regular (in the sense of
\cite[Definition 1.1]{Bohm:wmba_comod}) if and only if the corresponding
multiplier bimonoid in $\mathsf{vec}$ in Example \ref{ex:mbm_from_mba} extends
to a regular multiplier bimonoid.
\end{example}

Another class of examples is provided by bimonoids in braided monoidal
categories:

\begin{example}
The multiplier bimonoid induced by a bimonoid $A$ in a braided monoidal
category $\C$ in Example \ref{ex:mbm_from_bimonoid} can be supplemented with
the morphisms 
$$
\xymatrix@R=10pt{
t_3:=\big(
A^2\ar[r]^-{d.1}&
A^3\ar[r]^-{1.b^{-1}}&
A^3\ar[r]^-{1.m}&
A^2\big)\\
t_4:=\big(
A^2\ar[r]^-{1.d}&
A^3\ar[r]^-{b^{-1}.1}&
A^3\ar[r]^-{m.1}&
A^2\big).}
$$
Very similar computations to those in Example \ref{ex:bimonoid} and Example
\ref{ex:mbm_from_bimonoid} show that it yields a regular multiplier bimonoid.
\end{example}

\section{Comodules and multiplier bicomonads}
\label{sec:comodule} 

As we have seen in Section \ref{sec:tricoc} in the case of counital fusion
morphisms, the best way to investigate the behavior of modules and 
comodules is to study the induced functors. In this section, therefore, we
generalize `bicomonads' (that is, monoidal comonads) to {\em multiplier
bicomonads} on arbitrary monoidal categories. We show that the monoidal
structure of the base category lifts to a suitably defined category of
comodules. Proving that any regular multiplier bimonoid induces a
multiplier bicomonad, we conclude that their comodules (in the appropriate
sense) constitute a monoidal category admitting a strict monoidal forgetful
functor to the base category.

\subsection{Multiplier bicomonad}

Based on the considerations in Section \ref{sec:tric_comod}, we start with the
following.

\begin{definition} \label{def:multi_bicomonad}
A {\em multiplier bicomonad} on a monoidal category $\C$ is a functor $G:\C\to
\C$ equipped with natural transformations $\check G_2:GX.GY\to G(GX.Y)$, $\hat
G_2:GX.GY\to G(X.GY)$ and $\varepsilon:GX\to X$ such that $(\check
G_2,\varepsilon)$ makes $G$ a right multiplier bicomonad on $\C$,
$(\hat G_2,\varepsilon)$ makes $G$ a left multiplier bicomonad on
$\C$, and the following diagrams, expressing their compatibility, commute for
any objects $X,Y,Z$. First,
$$
\xymatrix{
GX.GY\ar[r]^-{\check G_2}\ar[d]_-{\hat G_2}&
G(GX.Y)\ar[d]^-{G(\varepsilon.1)}\\
G(X.GY)\ar[r]_-{G(1.\varepsilon)}&
G(X.Y).}
$$
The common diagonal in this diagram satisfies the same associativity condition
as the binary part of a monoidal functor (see Section
\ref{sec:tric_comod}). For this reason --- although in general it does not 
admit for a nullary part --- we use the notation $G_2:GX.GY\to G(X.Y)$ for
it. We also require this $G_2$ to satisfy the second compatibility
condition
$$
\xymatrix{
GX.GY.GZ\ar[r]^-{1.\check G_2}\ar[d]_-{\hat G_2.1}&
GX.G(GY.Z)\ar[d]^-{G_2}\\
G(X.GY).GZ\ar[r]_-{G_2}&
G(X.GY.Z).}
$$
\end{definition}

\begin{example}\label{ex:multi_comonad_from_bicomonad}
Consider a monoidal comonad $(G,\delta,\varepsilon)$ on a monoidal category
$\C$. We know from Example \ref{ex:G_check_from_bicomonad} that
$$
\check G_2:=\big(
\xymatrix{
GX.GY\ar[r]^-{\delta.1}&
G^2X.GY\ar[r]^-{G_2}&
G(GX.Y)}\big)
$$
(together with $\varepsilon$) makes $G$ into a right multiplier
bicomonad, and similarly
$$
\hat G_2:=\big(
\xymatrix{
GX.GY\ar[r]^-{1.\delta}&
GX.G^2Y\ar[r]^-{G_2}&
G(X.GY)}\big)
$$
makes $G$ into a left multiplier bicomonad. We claim that they constitute a
multiplier bicomonad. Indeed,
by the naturality of $G_2$ and by the counitality of $\delta$, also
$$
\xymatrix{
GX.GY\ar[r]^-{1.\delta}\ar[d]_-{\delta.1}\ar@{=}[rd]&
GX.G^2Y\ar[r]^-{G_2}\ar[d]^-{1.G\varepsilon}&
G(X.GY)\ar[dd]^-{G(1.\varepsilon)}\\
G^2X.GY\ar[r]^-{G\varepsilon.1}\ar[d]_-{G_2}&
GX.GY\ar[rd]^-{G_2}\\
G(GX.Y)\ar[rr]_-{G(\varepsilon.1)}&&
G(X.Y)}
$$
commutes and so does the second compatibility diagram in Definition
\ref{def:multi_bicomonad} by the associativity of $G_2$.
\end{example}

\begin{example}\label{ex:multi_comonad_from_t}
Consider a regular multiplier bimonoid $(t_1,t_2,t_3,t_4:A^2\to A^2,e:A\to I)$
in a braided monoidal category $\C$; and the induced functor $G=(-).A:\C\to
\C$. We know from Example \ref{ex:G_check_from_t} that $\varepsilon:=1.e$ and   
$$
\check G_2:=\big(
\xymatrix{
X.A.Y.A\ar[r]^-{1.b.1}&
X.Y.A^2\ar[r]^-{1.1.t_1}&
X.Y.A^2\ar[r]^-{1.b^{-1}.1}&
X.A.Y.A}\big)
$$
make $G$ into a right multiplier bicomonad. We claim that together with 
$$
\hat G_2:=\big(
\xymatrix{
X.A.Y.A\ar[r]^-{1.b.1}&
X.Y.A^2\ar[r]^-{1.1.b}&
X.Y.A^2\ar[r]^-{1.1.t_3}&
X.Y.A^2}\big)
$$
they constitute a multiplier bicomonad. Certainly $\hat G_2$ provides a 
left multiplier bicomonad structure; the first compatibility condition in
Definition \ref{def:multi_bicomonad} follows from axiom (C)  in
Definition \ref{def:reg_mbm} (together with the naturality and the coherence
of the braiding) and the second one follows by axiom (A) in
Definition \ref{def:reg_mbm}.
\end{example}

\subsection{The category of comodules}

Based on the notion of comodule in Definition \ref{def:G_check_comodule}, we
introduce the following. 

\begin{definition}\label{def:G_comodule}
By a {\em comodule} over a multiplier bicomonad $G$ on a monoidal category
$\C$ we mean the following. It is an object $V$ in $\C$ equipped with the
structure $\check v:V.G(-)\to G(V.-)$ of a comodule over the right 
multiplier bicomonad $(G,\check G_{2},\varepsilon)$, and also with the
structure $\hat v:G(-).V\to G(-.V)$ of a comodule over the left 
multiplier bicomonad $(G,\hat G_{2},\varepsilon)$ such that the
compatibility diagram 
$$
\xymatrix{
GX.V.GY\ar[r]^-{1.\check v}\ar[d]_-{\hat v.1}&
GX.G(V.Y)\ar[d]^-{G_2}\\
G(X.V).GY\ar[r]_-{G_2}&
G(X.V.Y)}
$$
commutes for any objects $X,Y$ (where $G_2$ is the natural transformation
introduced in Definition \ref{def:multi_bicomonad}).
A {\em morphism} of comodules is a morphism $f:V\to W$ in $\C$ such that both
diagrams
$$
\xymatrix{
V.GX\ar[d]_-{f.1}\ar[r]^-{\check v}&
G(V.X)\ar[d]^-{G(f.1)} &
GX.V\ar[d]_-{1.f}\ar[r]^-{\hat v}&
G(X.V)\ar[d]^-{G(1.f)}\\
W.GX\ar[r]_-{\check w}&
G(W.X) &
GX.W\ar[r]_-{\hat w}&
G(X.W)}
$$
commute for any object $X$. 
\end{definition}

\begin{theorem}\label{thm:mon_cat_of_G_comodules}
Consider a multiplier bicomonad $G$ on a monoidal category $\C$. The monoidal
structure of $\C$ lifts to the category of $G$-comodules in Definition
\ref{def:G_comodule}. 
\end{theorem}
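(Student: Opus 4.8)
The plan is to obtain the monoidal structure on $G$-comodules by superimposing the two monoidal structures already produced by Theorem~\ref{thm:mon_cat_of_check_G_comodules}: the one on comodules over the right multiplier bicomonad $(G,\check G_2,\varepsilon)$, and --- applying that theorem to $\C\rev$ --- the one on comodules over the left multiplier bicomonad $(G,\hat G_2,\varepsilon)$. Concretely, I would take the monoidal unit to be $I$ with the evident $\check{}$- and $\hat{}$-comodule structures built from the unit constraints, and for $G$-comodules $V$ and $W$ I would equip the object $V.W$ with the right structure $\check{(v.w)}:=\check v\,(1.\check w)\colon V.W.G(-)\to G(V.W.-)$ and the left structure $\hat{(v.w)}:=\hat w\,(\hat v.1)\colon G(-).V.W\to G(-.V.W)$; note that the left tensor product is computed in $\C\rev$, so it interchanges the roles of $V$ and $W$. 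That $(V.W,\check{(v.w)})$ is a comodule over $(G,\check G_2,\varepsilon)$ and $(V.W,\hat{(v.w)})$ a comodule over $(G,\hat G_2,\varepsilon)$ is then immediate from Theorem~\ref{thm:mon_cat_of_check_G_comodules} and its $\C\rev$-version, since these are exactly the tensor products in those monoidal categories; likewise the tensor of two $G$-comodule morphisms, and the unit and associativity constraints of $\C$, are simultaneously $\check{}$-morphisms and $\hat{}$-morphisms, hence morphisms of $G$-comodules.

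The one genuinely new point is the compatibility square of Definition~\ref{def:G_comodule} for $V.W$. After expanding $\check{(v.w)}$ and $\hat{(v.w)}$, this is a square with corners $GX.V.W.GY$ and $G(X.V.W.Y)$, and I would factor both of its legs through $G(X.V).G(W.Y)$. The reduction then rests on exactly three facts: the compatibility square for the comodule $V$, with its object variable instantiated as $W.Y$, which gives $G_2\,(1.\check v)=G_2\,(\hat v.1)$ on $GX.V.G(W.Y)$; the compatibility square for the comodule $W$, with its first object variable instantiated as $X.V$ (so that its first tensor slot becomes $G(X.V)$), which gives $G_2\,(\hat w.1)=G_2\,(1.\check w)$ on $G(X.V).W.GY$; and the interchange law for the monoidal product, which says that $\hat v$ (acting on the first two tensor factors) and $\check w$ (acting on the last two) may be applied in either order. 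Pasting these three together closes the square. The compatibility square for the unit $I$ is handled separately and follows purely from coherence and the naturality of $G_2$.

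Finally, since the underlying object of a tensor product (resp.\ of the unit) of $G$-comodules is literally the tensor product (resp.\ the unit) of $\C$, the forgetful functor is strict monoidal, which is the sense in which the monoidal structure ``lifts''. I do not expect any real difficulty; the only thing to watch is bookkeeping --- keeping straight the $V\leftrightarrow W$ reversal built into the left tensor structure, and invoking the $V$- and $W$-compatibility squares at the correctly substituted objects. It is perhaps worth remarking that the second compatibility condition of Definition~\ref{def:multi_bicomonad} plays no role in this argument; only the \emph{first} one enters, implicitly, through the well-definedness of the natural transformation $G_2$.
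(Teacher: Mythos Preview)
Your proposal is correct and follows essentially the same route as the paper: the paper too invokes Theorem~\ref{thm:mon_cat_of_check_G_comodules} (and its $\C\rev$-version) for the $\check{}$- and $\hat{}$-structures on $I$ and on $V.W$, and then verifies the compatibility square for $V.W$ by the same three-cell pasting you describe --- functoriality of the monoidal product for the $(\hat v.1,1.\check w)$-interchange, the compatibility square for $V$ instantiated at $(X,\,W.Y)$, and the compatibility square for $W$ instantiated at $(X.V,\,Y)$, all meeting at the intermediate vertex $G(X.V).G(W.Y)$. Your closing observation that the second compatibility axiom of Definition~\ref{def:multi_bicomonad} is not invoked in this proof is also accurate.
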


\begin{proof}
By Theorem \ref{thm:mon_cat_of_check_G_comodules}, the monoidal unit $I$
carries coassociative and counital comodule structures
$$
\xymatrix{
I.G(-)\ar[r]^-\cong&
G\ar[r]^-\cong&
G(I.-)&
G(-).I\ar[r]^-\cong&
G\ar[r]^-\cong&
G(-.I),}
$$
built up from the unit isomorphisms in $\C$.
The diagram in Definition \ref{def:G_comodule} commutes for this $G$-comodule
$I$ by naturality of $G_2$ and the coherence in $\C$. By Theorem
\ref{thm:mon_cat_of_check_G_comodules}, for any two $G$-comodules $V$ and $W$, 
there are coassociative and counital comodule structures
$$
\xymatrix @R=10pt{
V.W.G(-)\ar[r]^-{1.\check w}&
V.G(W.-)\ar[r]^-{\check v}&
G(V.W.-)\\
G(-).V.W\ar[r]^-{\hat v.1}&
G(-.V).W\ar[r]^-{\hat w}&
G(-.V.W).}
$$
Since both $V$ and $W$ satisfy the compatibility condition in Definition
\ref{def:G_comodule}, it follows by the functoriality of the monoidal product
that also
$$
\xymatrix{
GX.V.W.GY\ar[r]^-{1.1.\check w}\ar[d]_-{\hat v.1.1}&
GX.V.G(W.Y)\ar[r]^-{1.\check v}\ar[d]^-{\hat v.1}&
GX.G(V.W.Y)\ar[dd]^-{G_2}\\
G(X.V).W.GY\ar[r]^-{1.\check w}\ar[d]_-{\hat w.1}&
G(X.V).G(W.Y)\ar[rd]^-{G_2}\\
G(X.V.W).GY\ar[rr]_-{G_2}&&
G(X.V.W.Y)}
$$
commutes for any objects $X,Y$. In light of Theorem
\ref{thm:mon_cat_of_check_G_comodules} this completes the proof.
\end{proof}

\begin{example}
Consider the multiplier bicomonad on a monoidal category $\C$ induced by a
bicomonad $(G,\delta,\varepsilon)$ as in Example
\ref{ex:multi_comonad_from_bicomonad}. We claim that the category of comodules
over it, in the sense of Definition \ref{def:G_comodule}, is isomorphic to the
Eilenberg-Moore category of comodules over the comonad $G$; hence Theorem
\ref{thm:mon_cat_of_G_comodules} extends the result about the lifting of the
monoidal structure of $\C$ to the Eilenberg-Moore category of $G$ (see
e.g. \cite{McCrudden}).

The stated isomorphism acts on the morphisms as the identity map and its
object map is the following. Let us begin with an Eilenberg-Moore comodule
$v:V\to GV$ and put
$$
\check v:=\big(
\xymatrix{
V.GX\ar[r]^-{v.1}&
GV.GX\ar[r]^-{G_2}&
G(V.X) \big) }\ \ 
\hat v:=\big(
\xymatrix{
GX.V\ar[r]^-{1.v}&
GX.GV\ar[r]^-{G_2}&
G(X.V) \big) .}
$$
They satisfy the conditions in \eqref{eq:v_check} and their opposites,
respectively, see Example \ref{ex:EM_comodule}, and they obey the
compatibility condition in Definition \ref{def:G_comodule} by the
associativity of $G_2$. 
Conversely, let $(V,\check v,\hat v)$ be a $G$-comodule as in Definition
\ref{def:G_comodule}. We know from Example \ref{ex:EM_comodule} that the
natural transformation $\check v$ corresponds bijectively to the
Eilenberg-Moore coaction 
$$
\xymatrix{
V\ar[r]^-{1.G_0}&
V.GI\ar[r]^-{\check v}&
GV,}
$$
and the natural transformation $\hat v$ corresponds bijectively to the
Eilenberg-Moore coaction 
$$
\xymatrix{
V\ar[r]^-{G_0.1}&
GI.V\ar[r]^-{\hat v}&
GV.}
$$
Precomposing by 
$
\xymatrix@C=35pt{
V \ar[r]^-{G_0.1.G_0}&
GI.V.GI}
$
both paths around the compatibility diagram in Definition \ref{def:G_comodule}
at $X=Y=I$, and using the unitality of the monoidal structure of $G$, we
conclude that these Eilenberg-Moore coactions coincide.
\end{example}

Further examples of the situation in Definition \ref{def:G_comodule} are
provided by the following.

\begin{definition}\label{def:reg_mbm_comodule}
A {\em comodule} over a regular multiplier bimonoid $(t_1,t_2,t_3,t_4:A^2\to
A^2,e:A\to I)$ in a braided monoidal category $\C$ is an object
$V$ of $\C$ equipped with the structure $v_1:V.A\to V.A$ of a comodule over
the counital fusion morphism $t_1$ in $\C$ and the structure $v_3:V.A\to V.A$
of a comodule over the counital fusion morphism $t_3$ in $\overline \C$ such
that the following diagram commutes.
$$
\xymatrix{
A.V.A\ar[r]^-{1.v_1}\ar[d]_-{b.1}&
A.V.A\ar[r]^-{b.1}&
V.A^2\ar[d]^-{1.m}\\
V.A^2\ar[r]_-{v_3.1}&
V.A^2\ar[r]_-{1.m}&
V.A}
$$
where $m$ is the multiplication in Proposition
\ref{prop:multiplication}~(1) (associated to $t_1$ or $t_2$, cf. the second
diagram in Definition \ref{def:mbm}; differing by the braiding from the
multiplication associated to $t_3$ or $t_4$; cf. axiom (C)  in
Definition \ref{def:reg_mbm}). 
A {\em morphism} of comodules is a morphism $f:V\to W$ in $\C$ such that the
following diagrams commute.
$$
\xymatrix{
V.A\ar[d]_-{f.1}\ar[r]^-{v_1}&
V.A\ar[d]^-{f.1} &
V.A\ar[d]_-{f.1}\ar[r]^-{v_3}&
V.A\ar[d]^-{f.1}\\
W.A\ar[r]_-{w_1}&
W.A &
W.A\ar[r]_-{w_3}&
W.A}
$$
\end{definition}

It follows immediately from the compatibility condition in Definition
\ref{def:reg_mbm_comodule} that, in a comodule $(V,v_1,v_3)$ over a regular
multiplier bimonoid with {\em non-degenerate} multiplication, $v_3$ is
uniquely determined by $v_1$. Equivalently, $v_1$ is uniquely determined by
$v_3$. Furthermore, in this case, both diagrams in Definition
\ref{def:reg_mbm_comodule} defining morphisms of comodules become equivalent
to each other: morphisms of comodules can be defined by either one of
them. 

For a regular multiplier bimonoid in $\mathsf{vec}$, induced by a
regular multiplier bialgebra over a field as in Example
\ref{ex:reg_mbm_in_vec}, we recover the notion of comodule in
\cite[Definition 2.7]{VDaZha:corep_I}. 

A comodule $(V,v_1,v_3)$ as in Definition \ref{def:reg_mbm_comodule} induces a
comodule over the multiplier bicomonad in Example
\ref{ex:multi_comonad_from_t} by putting
$$
\xymatrix@R=10pt{
\check v:=\big(
V.X.A\ar[r]^-{b.1}&
X.V.A\ar[r]^-{1.v_1}&
X.V.A\ar[r]^-{b^{-1}.1}&
V.X.A\big)\\
\hat v:=\big(
X.A.V\ar[r]^-{1.b}&
X.V.A\ar[r]^-{1.v_3}&
X.V.A\big).}
$$
Hence from Theorem \ref{thm:mon_cat_of_G_comodules} we obtain the following.

\begin{corollary}
For any regular multiplier bimonoid $(t_1,t_2,t_3,t_4:A^2\to A^2,e:A\to I)$ in
a braided monoidal category $\C$, the monoidal structure of $\C$ lifts to the
category of comodules in Definition \ref{def:reg_mbm_comodule}.
\end{corollary}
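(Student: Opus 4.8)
The plan is to reduce the statement to Theorem~\ref{thm:mon_cat_of_G_comodules} by means of the multiplier bicomonad $G=(-).A$ associated to the regular multiplier bimonoid in Example~\ref{ex:multi_comonad_from_t}. As observed just before the statement, a comodule $(V,v_1,v_3)$ in the sense of Definition~\ref{def:reg_mbm_comodule} gives rise to a $G$-comodule $(V,\check v,\hat v)$ in the sense of Definition~\ref{def:G_comodule}, where $\check v$ and $\hat v$ are obtained from $v_1$ and $v_3$ by conjugation with braidings. First I would check that $(V,\check v,\hat v)$ does satisfy all the axioms of Definition~\ref{def:G_comodule}: the conditions on $\check v$ alone amount to $v_1$ being a comodule over the counital fusion morphism $t_1$ (the remark following Definition~\ref{def:tric_comodule}), the conditions on $\hat v$ alone are the dual statement for $v_3$ and the counital fusion morphism $t_3$ in $\overline\C$, and the remaining $G_2$-compatibility follows from the compatibility diagram in Definition~\ref{def:reg_mbm_comodule} together with naturality and coherence of the braiding. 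Since the braidings used are invertible, this construction is the object part of a fully faithful functor which is injective on objects, so the category of Definition~\ref{def:reg_mbm_comodule} comodules may be identified with a full subcategory of the category of $G$-comodules.

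By Theorem~\ref{thm:mon_cat_of_G_comodules} the category of $G$-comodules is monoidal with strict monoidal forgetful functor to $\C$, so it suffices to verify that the above full subcategory is closed under this monoidal structure. The monoidal unit is the $G$-comodule built from the unit isomorphisms of $\C$; it is the $G$-comodule induced by the object $I$ equipped with $v_1:=1_A=:v_3$, for which the compatibility diagram of Definition~\ref{def:reg_mbm_comodule} reduces to the triviality $mb=mb$. For comodules $(V,v_1,v_3)$ and $(W,w_1,w_3)$ I would equip $V.W$ with the $t_1$-comodule structure $(b^{-1}.1)(1.v_1)(b.1)(1.w_1)$ supplied by Corollary~\ref{cor:mon_cat_of_tric_comodules}, and with the dual $t_3$-comodule structure, namely the analogue of that recipe for the counital fusion morphism $t_3$ in $\overline\C$ (with the roles of $b$ and $b^{-1}$ interchanged). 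By construction these two structures induce precisely the $\check v$- and $\hat v$-components of the tensor product of the $G$-comodules induced by $V$ and $W$, as described in the proofs of Theorems~\ref{thm:mon_cat_of_check_G_comodules} and~\ref{thm:mon_cat_of_G_comodules}.

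What remains, and what I expect to be the main obstacle, is to check that this pair of structures on $V.W$ satisfies the compatibility diagram of Definition~\ref{def:reg_mbm_comodule}; this is a diagram chase feeding in the corresponding compatibilities for $V$ and for $W$, the associativity of $m$ (Proposition~\ref{prop:multiplication}~(1)), and naturality and coherence of the braiding --- essentially the concrete incarnation of the last displayed diagram in the proof of Theorem~\ref{thm:mon_cat_of_G_comodules}. Once this is established, $V.W$ with these two structures is a comodule in the sense of Definition~\ref{def:reg_mbm_comodule} whose induced $G$-comodule is the tensor product of the induced $G$-comodules of $V$ and $W$; hence the full subcategory is closed under the monoidal product and unit of the category of $G$-comodules. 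The associativity and unit constraints of $\C$ are morphisms of $G$-comodules between objects of this subcategory, hence morphisms in the sense of Definition~\ref{def:reg_mbm_comodule}, so all coherence is inherited. This exhibits the desired monoidal structure on the category of comodules, with strict monoidal forgetful functor to $\C$, completing the proof.
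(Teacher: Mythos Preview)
Your proposal is correct and follows essentially the same route as the paper: the paper's argument consists of the sentence ``Hence from Theorem~\ref{thm:mon_cat_of_G_comodules} we obtain the following'' together with the explicit formulas for the $t_1$- and $t_3$-comodule structures on $V.W$ displayed immediately after the corollary, and you have unpacked precisely this argument --- embed into $G$-comodules via Example~\ref{ex:multi_comonad_from_t}, apply Theorem~\ref{thm:mon_cat_of_G_comodules}, and check closure of the image. One small caution: the $t_3$-comodule structure on $V.W$ that actually matches the $\hat v$-component of the $G$-comodule tensor product is $(1.w_3)(1.b)(v_3.1)(1.b^{-1})$ (as the paper records), which is not literally what you get by instantiating Corollary~\ref{cor:mon_cat_of_tric_comodules} in $\overline\C$ with $b$ and $b^{-1}$ swapped --- so when you carry out the verification that ``by construction these two structures induce precisely the $\check v$- and $\hat v$-components'', make sure you use the formula dictated by $\hat w(\hat v.1)$ rather than the one you sketched.
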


For any comodules $(V,v_1,v_3)$ and $(W,w_1,w_3)$, $V.W$ is
again a comodule via
$$
\xymatrix@R=10pt{
V.W.A\ar[r]^-{1.w_1}&
V.W.A\ar[r]^-{b.1}&
W.V.A\ar[r]^-{1.v_1}&
W.V.A\ar[r]^-{b^{-1}.1}&
V.W.A\\
V.W.A\ar[r]^-{1.b^{-1}}&
V.A.W\ar[r]^-{v_3.1}&
V.A.W\ar[r]^-{1.b}&
V.W.A\ar[r]^-{1.w_3}&
V.W.A.}
$$

\section{Modules and multiplier bimonads}
\label{sec:module}

This section is devoted to the study of the modules over a regular multiplier
bimonoid in a braided monoidal category. As in the case of comodules in the
previous section, this will be done by investigating the induced functors. To
this end, we define {\em multiplier bimonads} on arbitrary monoidal
categories, which generalize `bimonads'; that is, opmonoidal monads. Under
some further assumptions (of certain morphisms being split epimorphisms) we
show that the monoidal structure of the base category lifts to the category of
suitably defined modules. Showing that any regular multiplier bimonoid induces
a multiplier bimonad, we draw conclusions about the categories of their
modules.

\subsection{Multiplier bimonad}
Based on the considerations in Section \ref{sec:tric_module}, we introduce the
following notion.

\begin{definition} \label{def:multi_bimonad}
A {\em multiplier bimonad} on a monoidal category $\C$ is a functor $T:\C\to
\C$ equipped with natural transformations $\hat T_2:T(X.TY)\to TX.TY$, $\check
T_2:T(TX.Y)\to TX.TY$ and a morphism $T_0:TI\to I$ such that $(\hat T_2,T_0)$
makes $T$ a left multiplier bimonad on $\C$, $(\check T_2,T_0)$
makes $T$ a right multiplier bimonad on $\C$, and these structures
are compatible in the sense of the following commutative diagrams.
$$
\xymatrix{
T(TX.Y.TZ)\ar[r]^-{\check T_2}\ar[d]_-{\hat T_2}&
TX.T(Y.TZ)\ar[d]^-{1.\hat T_2} &
T^2X\ar[r]^-{\hat T_2}\ar[d]_-{\check T_2}&
TI.TX\ar[d]^-{T_0.1}\\
T(TX.Y).TZ\ar[r]_-{\check T_2.1}&
TX.TY.TZ &
TX.TI\ar[r]_-{1.T_0}&
 TX}
$$
\end{definition}

The common diagonal in the second diagram in Definition
\ref{def:multi_bimonad} is an associative (though in general non-unital)
multiplication that we denote by $\mu:T^2\to T$.

By the compatibility diagrams in Definition \ref{def:multi_bimonad}, the
following diagram commutes for any multiplier bimonad $T$ and any objects
$X,Y$. 
\begin{equation}\label{eq:same_surjective}
\xymatrix{
T(TX.TY)\ar[r]^-{\hat T_2}\ar[d]_-{\check T_2}&
T^2X.TY\ar[r]^-{\hat T_2.1}\ar[d]^-{\check T_2.1}&
TI.TX.TY\ar[dd]^-{T_0.1.1}\\
TX.T^2Y\ar[r]^-{1.\hat T_2}\ar[d]_-{1.\check T_2}&
TX.TI.TY\ar[rd]^-{1.T_0.1}\\
TX.TY.TI\ar[rr]_-{1.1.T_0}&&
TX.TY}
\end{equation}

\begin{example}\label{ex:multi_bimonad_from_bimonad}
Consider an opmonoidal monad $(T,\mu,\eta)$ on a monoidal category $\C$. We
know from Example \ref{ex:T_hat_from_bimonad} that 
$$
\hat T_2:=\big(
\xymatrix{
T(X.TY)\ar[r]^-{T_2}&
TX.T^2Y\ar[r]^-{1.\mu}&
TX.TY}\big)
$$
(together with the nullary part $T_0:TI\to I$ of the opmonoidal
structure) makes $T$ into a left multiplier bimonad, and similarly
$$
\check T_2:=\big(
\xymatrix{
T(TX.Y)\ar[r]^-{T_2}&
T^2X.TY\ar[r]^-{\mu.1}&
TX.TY}\big)
$$
and $T_0$ make $T$ into a right multiplier bimonad. They obey the
compatibility conditions in Definition \ref{def:multi_bimonad} by the
coassociativity and the counitality of the opmonoidal structure $(T_2,T_0)$
and the functoriality of the monoidal product. 
\end{example}

\begin{example}\label{ex:multi_bimonad_from_t}
Consider a regular multiplier bimonoid $(t_1,t_2,t_3,t_4:A^2\to A^2,e:A\to I)$
in a braided monoidal category $\C$ and the induced functor $T=A.(-):\C\to
\C$. We know from Example \ref{ex:T_hat_from_t} that $T_0=e$ and 
$$
\hat T_2:=\big(
\xymatrix{
A.X.A.Y\ar[r]^-{1.b^{-1}.1}&
A^2.X.Y\ar[r]^-{t_1.1.1}&
A^2.X.Y\ar[r]^-{1.b.1}&
A.X.A.Y}\big)
$$
make $T$ into a left multiplier bimonad. We claim that together with
$$
\check T_2:=\big(
\xymatrix{
A^2.X.Y\ar[r]^-{b.1.1}&
A^2.X.Y\ar[r]^-{t_4.1.1}&
A^2.X.Y\ar[r]^-{1.b.1}&
A.X.A.Y}\big)
$$
they constitute a multiplier bimonad. Certainly $\check T_2$ provides a 
right multiplier bimonad structure; the compatibility conditions in 
Definition~\ref{def:multi_bimonad} hold by axiom (B), and the equivalent form 
$(1.e)t_4b=(e.1)t_1$ of axiom (C) in Definition \ref{def:reg_mbm},
respectively.
\end{example}

\subsection{The category of modules} Based on the notion of module in
Definition \ref{def:T_hat_module}, we introduce the following.

\begin{definition}\label{def:multi_bimonad_module}
A {\em module} over a multiplier bimonad on a monoidal category $\C$ is an
object $Q$ in $\C$ equipped with the structure $\hat q:T(-.Q)\to
T(-).Q$ of a module over the left multiplier bimonad $(T,\hat
T_2,T_0)$, and also with the structure $\check q:T(Q.-)\to Q.T(-)$ of a module
over the right multiplier bimonad $(T,\check T_2,T_0)$ and these
structures are compatible in the sense of the following commutative diagrams
(for any objects 
$X,Y$).
$$
\xymatrix{
T(TX.Y.Q)\ar[r]^-{\hat q}\ar[d]_-{\check T_2}&
T(TX.Y).Q\ar[d]^-{\check T_2.1} &
T(Q.X.TY)\ar[r]^-{\check q}\ar[d]_-{\hat T_2}&
Q.T(X.TY)\ar[d]^-{1.\hat T_2} \\
TX.T(Y.Q)\ar[r]_-{1.\hat q}&
TX.TY.Q&
T(Q.X).TY\ar[r]_-{\check q.1}&
Q.TX.TY \\
&TQ \ar[r]^-{\hat q}\ar[d]_-{\check q}&
TI.Q\ar[d]^-{T_0.1}\\
&Q.TI\ar[r]_-{1.T_0}&
Q.}
$$
A {\em morphism} of modules is a morphism $f:Q\to P$ in $\C$ rendering
commutative the diagrams
$$
\xymatrix{
T(X.Q)\ar[r]^-{\hat q}\ar[d]_-{T(1.f)}&
TX.Q\ar[d]^-{1.f} &
T(Q.X)\ar[r]^-{\check q} \ar[d]_-{T(f.1)}&
Q.TX\ar[d]^-{f.1}\\
T(X.P)\ar[r]_-{\hat p}&
TX.P &
T(P.X)\ar[r]_-{\check p}&
P.TX.}
$$
for any object $X$.
\end{definition}

The common diagonal in the third diagram in Definition
\ref{def:multi_bimonad_module} is an associative action (with respect to the
associative multiplication $\mu:T^2\to T$) that we denote by $q:TQ\to Q$.
It is a split epimorphism by Definition \ref{def:T_hat_module}. 

\begin{theorem}\label{thm:mon_cat_of_multi_bimonad_modules}
Consider a multiplier bimonad $T$ on a monoidal category $\C$. Assume that
$T_0$ and, for any objects $X$ and $Y$, the equal morphisms
(cf. \eqref{eq:same_surjective}) 
$$
\xymatrix@ R=8pt{
\big(T(TX.TY)\ar[r]^-{\hat T_2}&
T^2X.TY\ar[r]^-{\hat T_2.1}&
TI.TX.TY\ar[r]^-{T_0.1.1}&
TX.TY\big)=\\
\big(T(TX.TY)\ar[r]^-{\check T_2}&
TX.T^2Y\ar[r]^-{1.\check T_2}&
TX.TY.TI\ar[r]^-{1.1.T_0}&
TX.TY\big)}
$$
are split epimorphisms. Then the category of $T$-modules of Definition
\ref{def:multi_bimonad_module} is monoidal in such a way that the forgetful
functor to $\C$ is strict monoidal.
\end{theorem}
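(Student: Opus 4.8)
The plan is to derive the result from Theorem~\ref{thm:mon_cat_of_hat_T_modules} together with its reverse-dual, in the same way that Theorem~\ref{thm:mon_cat_of_G_comodules} was derived from Theorem~\ref{thm:mon_cat_of_check_G_comodules}. First I would check that the hypotheses of Theorem~\ref{thm:mon_cat_of_hat_T_modules} hold both for the left multiplier bimonad $(T,\hat T_2,T_0)$ on $\C$ and, after passing to $\C\rev$, for the right multiplier bimonad $(T,\check T_2,T_0)$. The composite named in the hypothesis of Theorem~\ref{thm:mon_cat_of_hat_T_modules} is precisely the first of the two composites displayed in the present statement, while (after renaming the two variables) the corresponding composite for $\C\rev$ is the second one; these agree by \eqref{eq:same_surjective} and are split epic by assumption, and $T_0$ is split epic by assumption as well. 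Consequently both the category of modules over the left multiplier bimonad and the category of modules over the right multiplier bimonad are monoidal with strict monoidal forgetful functors to $\C$, and the two resulting monoidal structures have the same unit $I$ (with its unit-constraint module structure) and the same tensor product on underlying objects.

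A module over $T$ in the sense of Definition~\ref{def:multi_bimonad_module} is exactly a left-module structure and a right-module structure on one object satisfying the three further compatibility diagrams of that definition, and a morphism is a morphism for both of the two structures. So it remains to verify that $I$ with its unit-constraint structures, and the tensor product $P.Q$ of two $T$-modules with the left and right structures supplied by Theorem~\ref{thm:mon_cat_of_hat_T_modules} and its dual, satisfy those three compatibility diagrams, and that the unit and associativity isomorphisms of $\C$ are morphisms of $T$-modules and obey the coherence axioms. The latter is immediate, since these isomorphisms are already morphisms for, and the coherence already holds for, the separate left- and right-module structures. For the unit $I$, the three compatibility diagrams reduce --- the structure maps being unit constraints --- to naturality of $\hat T_2$ and $\check T_2$ and coherence of $\C$, hence commute.

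The substance lies in the tensor product. Writing $\hat\pi$ for the left structure on $P.Q$ built from $\hat p$ and $\hat q$ as in the proof of Theorem~\ref{thm:mon_cat_of_hat_T_modules}, and $\check\pi$ for the right structure dually built from $\check p$ and $\check q$, one checks the first two compatibility diagrams of Definition~\ref{def:multi_bimonad_module} for $(P.Q,\hat\pi,\check\pi)$ by pasting together the corresponding diagrams for $Q$ and for $P$ with the two compatibility squares of the multiplier bimonad $T$, using naturality of the structure maps and functoriality of the monoidal product; this is the module analogue of the single large pasting diagram in the proof of Theorem~\ref{thm:mon_cat_of_G_comodules}. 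The genuinely new point is the third, action-agreement, diagram for $P.Q$: instead of evaluating at the unit (which is not available in the absence of unitality) one precomposes the two sides of the desired identity with the split, hence epic, morphism $T(TX.TY)\to TX.TY$ of the hypothesis, after which the identity follows from the action-agreement diagrams for $Q$ and for $P$ individually together with functoriality. This is exactly where the split-epimorphism hypothesis on that composite --- and the identity \eqref{eq:same_surjective} making its two descriptions coincide --- gets used, mirroring the role played by split epimorphisms in the proof of Theorem~\ref{thm:mon_cat_of_hat_T_modules}.

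The main obstacle I anticipate is this last step: organizing the pasting diagrams for the two binary compatibility conditions of the tensor-product module and setting up the split-epimorphism argument for the action-agreement diagram so that the contributions of the compatibilities of $Q$, of $P$, and of $T$ itself are cleanly separated. Everything else reduces, through Theorem~\ref{thm:mon_cat_of_hat_T_modules} and its reverse-dual, to facts already in hand for the left- and right-module structures.
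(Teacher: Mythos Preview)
Your overall strategy is the paper's: invoke Theorem~\ref{thm:mon_cat_of_hat_T_modules} and its reverse-dual to equip $I$ and $P.Q$ with the two one-sided module structures, then verify the three compatibility diagrams of Definition~\ref{def:multi_bimonad_module}. The treatment of $I$ and of the first two compatibility diagrams for $P.Q$ is fine (in fact for the latter the squares of $T$ itself are not needed; pasting the first compatibility for $Q$ against that for $P$, tensored with $Q$, suffices).

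The gap is in the third (action-agreement) diagram. You propose to precompose the two sides with the split epimorphism $T(TX.TY)\to TX.TY$ of the hypothesis, but that morphism has codomain $TP.TQ$, while the two sides to be compared have domain $T(P.Q)$; there is no evident way to compose them. More importantly, the hypothesis split epimorphism is \emph{not} what drives this step: it is already consumed inside Theorem~\ref{thm:mon_cat_of_hat_T_modules}, where it guarantees that the action $T(P.Q)\to P.Q$ is itself split epic. What the paper uses here is a different split epimorphism, namely the action $q=(T_0.1)\hat q\colon TQ\to Q$ coming from the module axioms, so that $T(1.q)\colon T(P.TQ)\to T(P.Q)$ is epic.

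The paper then proves an intermediate ``mixed'' commutation $(\check p.1)\hat q = (1.\hat q)\check p\colon T(P.Q)\to P.TI.Q$ by precomposing both sides with this $T(1.q)$ and chasing a diagram. That chase uses, besides naturality of $\check p$ and $\hat q$ and counitality of $\hat T_2$, two ingredients you did not anticipate: the fusion equation on $\hat q$ and the \emph{second} compatibility condition on $P$ (relating $\check p$ to $\hat T_2$). Only after this intermediate identity is in hand do the individual action-agreement diagrams for $P$ and $Q$ combine to give that for $P.Q$:
\[
(T_0.1.1)(\hat p.1)\hat q
=(1.T_0.1)(\check p.1)\hat q
=(1.T_0.1)(1.\hat q)\check p
=(1.1.T_0)(1.\check q)\check p.
\]
So the missing idea is this intermediate identity and the recognition that its proof needs the second compatibility of $P$ and the fusion equation for $\hat q$, not merely functoriality and the action-agreement diagrams for $P$ and $Q$.
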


\begin{proof}
By Theorem \ref{thm:mon_cat_of_hat_T_modules} and by the coherence in $\C$, the
monoidal unit $I$ of $\C$ carries a $T$-module structure (via the natural
transformations built up from the unit isomorphisms). Also by
Theorem \ref{thm:mon_cat_of_hat_T_modules}, the monoidal product of any
$T$-modules $P$ and $Q$ is a $T$-module via the natural transformations
$$
\xymatrix@R=8pt{
T(-.P.Q)\ar[r]^-{\hat q}&
T(-.P).Q\ar[r]^-{\hat p.1}&
TX.P.Q\\
T(P.Q.X)\ar[r]^-{\check p}&
P.T(Q.X)\ar[r]^-{1.\check q}&
P.Q.TX.}
$$
These natural transformations clearly obey the first two compatibility
conditions in Definition \ref{def:multi_bimonad_module} whenever $(\hat
q,\check q)$ and $(\hat p,\check p)$ do. Since $q=(T_0.1)\hat q$ is
a split epimorphism, the equal morphisms in the top row and in the
left column of the diagram
$$ 
\xymatrix@C=27pt{
% 1.1
T(P.TQ)\ar@{=}[r]\ar[dddd]_-{T(1.\hat q)}&
% 1.2
T(P.TQ)\ar@{=}[r]\ar[ddd]_-{\check p}&
% 1.3
T(P.TQ)\ar[r]^-{T(1.\hat q)}\ar[dd]^-{\hat T_2}\ar@{}[rdd]|-{\textrm{(d)}}&
% 1.4
T(P.TI.Q)\ar[r]^-{T(1.T_0.1)}\ar[d]^-{\hat q}\ar@{}[rd]|-{\textrm{(b)}}&
% 1.5
T(P.Q)\ar[d]^-{\hat q}\\
&&&
% 2.4
T(P.TI).Q\ar[r]^-{T(1.T_0).1}\ar[d]^-{\hat T_2.1}\ar@{}[rd]|-{\textrm{(c)}}&
% 2.5
TP.Q\ar@{=}[d]\\
% 3.1
\ar@{}[r]|-{\textrm{(a)}}&
% 3.2
\ar@{}[r]|-{\textrm{(e)}}&
% 3.3
TP.TQ\ar[r]^-{1.\hat q}\ar[d]^-{\check p.1}&
% 3.4
TP.TI.Q \ar[r]^-{1.T_0.1}\ar[d]^-{\check p.1.1}&
% 3.5
TP.Q\ar[ddd]^-{\check p.1}\\
&
% 4.2
P.T^2Q\ar[r]^-{1.\hat T_2}\ar[d]_-{1.T\hat q}\ar@{}[rrd]|-{\textrm{(d)}}&
% 4.3
P.TI.TQ\ar[r]^-{1.1.\hat q}&
% 4.4
P.TI.TI.Q\ar@{=}[d]\\
% 5.1
T(P.TI.Q)\ar[d]_-{T(1.T_0.1)}&
% 5.2
P.T(TI.Q)\ar[r]^-{1.\hat q}\ar[d]_-{1.T(T_0.1)}\ar@{}[rd]|-{\textrm{(b)}}&
% 5.3
P.T^2I.Q\ar[r]^-{1.\hat T_2.1}\ar[d]^-{1.TT_0.1}\ar@{}[rd]|-{\textrm{(c)}}&
% 5.4
P.TI.TI.Q\ar[rd]^-{1.1.T_0.1}\\
% 6.1
T(P.Q)\ar[r]_-{\check p}&
% 6.2
P.TQ\ar[r]_-{1.\hat q}&
% 6.3
P.TI.Q\ar@{=}[rr]&&
% 6.5
P.TI.Q}
$$
are epimorphisms. The regions labelled by (a) and (b) commute by the
naturality of $\check p$ and of $\hat q$, respectively. Regions (c) commute by
the counitality of $\hat T_2$. Regions (d) commute by the fusion equation
on $\hat q$ and (e) commutes since the second compatibility condition in
Definition \ref{def:multi_bimonad_module} holds on $P$. The unlabelled regions
commute by the functoriality of the monoidal product.
This proves that the morphisms in the right column and the bottom row are
equal; that is, $(1.\hat q)\check p=(\check p.1)\hat q$. Using this identity
and the assumption that $P$ and $Q$ obey the third compatibility condition in
Definition \ref{def:multi_bimonad_module}, we conclude that also $P.Q$ obeys
the third compatibility condition in Definition
\ref{def:multi_bimonad_module}:
$$
\xymatrix{
T(P.Q)\ar[r]^-{\hat q}\ar[d]_-{\check p}&
TP.Q\ar[r]^-{\hat p.1}\ar[d]^-{\check p.1}&
TI.P.Q\ar[dd]^-{T_0.1.1}\\
P.TQ\ar[r]^-{1.\hat q}\ar[d]_-{1.\check q}&
P.TI.Q\ar[rd]^-{1.T_0.1}\\
P.Q.TI\ar[rr]_-{1.1.T_0}&&
P.Q\ .}
$$
In light of Theorem \ref{thm:mon_cat_of_hat_T_modules}, this
completes the proof.
\end{proof}

\begin{example}
Consider an opmonoidal monad $(T,\mu,\eta)$ on a monoidal category $\C$ and
the induced multiplier bimonad in Example
\ref{ex:multi_bimonad_from_bimonad}. We claim that its category of modules in 
the sense of Definition \ref{def:multi_bimonad_module} is isomorphic to the
usual Eilenberg-Moore category of modules. Hence Theorem
\ref{thm:mon_cat_of_multi_bimonad_modules} generalizes the fact (see
e.g. \cite{McCrudden}) that the monoidal structure of the base category lifts
to the Eilenberg-Moore category of a bimonad.

The stated isomorphism acts on the morphisms as the identity map. It takes an
Eilenberg-Moore module $q:TQ\to Q$ to the module
$$
\xymatrix{
T(-.Q)\ar[r]^-{T_2}&
T(-).TQ\ar[r]^-{1.q}&
T(-).Q &
T(Q.-)\ar[r]^-{T_2}&
TQ.T(-)\ar[r]^-{q.1}&
Q.T(-)}
$$
in the sense of Definition \ref{def:multi_bimonad_module}.
They obey the conditions in Definition \ref{def:T_hat_module} and their
opposites, respectively, see Example \ref{ex:EM_module}. The compatibility
conditions in Definition \ref{def:multi_bimonad_module} hold by the
coassociativity and counitality of the opmonoidal structure $(T_2,T_0)$ and
the functoriality of the monoidal product.
In the opposite direction, a module $(\hat q:T(-.Q)\to T(-).Q,\check q:T(Q.-)
\to Q.T(-))$ in the sense of Definition \ref{def:multi_bimonad_module} is
taken to the Eilenberg-Moore comodule $q:=(T_0.1)\hat q=(1.T_0).\check q$. We
know from Example \ref{ex:EM_module} that this is an associative and unital
action and these constructions yield mutually inverse bijections.
\end{example}

Further examples of the situation in Definition \ref{def:multi_bimonad_module}
are obtained from the following.

\begin{definition}\label{def:reg_mbm_module}
A {\em module} over a regular multiplier bimonoid $(t_1,t_2,t_3,t_4:A^2\to
A^2,e:A\to I)$ in a braided monoidal category $\C$ is an object 
$Q$ of $\C$ equipped with morphisms $q_1:A.Q\to A.Q$ and $q_4:Q.A\to Q.A$ in
$\C$ such that $(Q,q_1)$ is a module (in the sense of Definition
\ref{def:tric_module}) over the counital fusion morphism $t_1$ in
$\C$; $(Q,q_4)$ is a module over the counital fusion morphism 
$t_4$ in $\overline \C\rev$ and these structures are compatible in the sense
of the following diagrams.
$$
\xymatrix @R=12pt{
A^2.Q\ar[r]^-{1.q_1}\ar[dd]_-{t_4.1}&
A^2.Q\ar[dd]^-{t_4.1} &
Q.A^2\ar[r]^-{q_4.1} \ar[dd]_-{1.t_1}&
Q.A^2\ar[dd]^-{1.t_1} &
A.Q\ar[r]^-{q_1}\ar[d]_-{b}&
A.Q\ar[dd]^-{e.1}\\
&&&&Q.A\ar[d]_-{q_4}\\
A^2.Q\ar[r]_-{1.q_1}&
A^2.Q &
Q.A^2\ar[r]_-{q_4.1}&
Q.A^2&
Q.A\ar[r]_-{1.e}&
Q}
$$
(So that the common diagonal of the last diagram is a split epimorphism
by Definition \ref{def:tric_module}.)

A {\em morphism} of modules is a morphism $f:Q\to P$ in $\C$ such that the
following diagrams commute.
$$
\xymatrix{
A.Q\ar[r]^-{q_1}\ar[d]_-{1.f}&
A.Q\ar[d]^-{1.f} &
Q.A\ar[r]^-{q_4}\ar[d]_-{f.1}&
Q.A\ar[d]^-{f.1}\\
A.P\ar[r]_-{p_1}&
A.P &
P.A\ar[r]_-{p_4}&
P.A}
$$
\end{definition}

By the first and the last compatibility conditions on a module $(q_1,q_4)$ in
Definition~\ref{def:reg_mbm_module}, the diagram
$$
\xymatrix{
A^2.Q\ar[r]^-{1.q_1}\ar[d]_-{t_4.1}&
A^2.Q\ar[d]_-{t_4.1}\ar@/^2pc/[dd]^-{m b^{-1} .1} \\
A^2.Q\ar[r]^-{1.q_1}\ar[d]_-{1.q_4b}&
A^2.Q\ar[d]_-{1.e.1}\\
A.Q.A\ar[r]_-{1.1.e}&
A.Q}
$$
commutes. Hence if the multiplication $m$ is non-degenerate, then $q_1$ is
uniquely determined by $q_4$. Equivalently, using the second and the last
compatibility conditions in Definition~\ref{def:reg_mbm_module},
$q_4$ is uniquely determined by $q_1$. Furthermore, in this case, both
diagrams in Definition \ref{def:reg_mbm_module} defining morphisms of modules
become equivalent to each other: morphisms of modules can be defined by either
one of them.

\begin{example}\label{ex:mba_module}
Consider a regular multiplier bimonoid in $\mathsf{vec}$ induced by a regular
multiplier bialgebra over a field as in Example \ref{ex:reg_mbm_in_vec}. We
claim that its category of modules in Definition \ref{def:reg_mbm_module} is
isomorphic to the following category. The objects are vector spaces $Q$
equipped with an associative $A$-action $q:A.Q\to Q$ which is in addition a
surjective map. The morphisms are the linear maps which commute with the
actions.

The stated isomorphism acts on the morphisms as the identity map. It
takes a module $(q_1,q_4)$ in Definition \ref{def:reg_mbm_module} to the
associative and surjective action $(e.1)q_1=(1.e)q_4b$ (see Example
\ref{ex:tric_module_from_mba}), where $b$ stands for the symmetry in
$\mathsf{vec}$. In the opposite direction, we know from Example
\ref{ex:tric_module_from_mba} that associative and surjective actions
$q:A.Q\to Q$ are in a bijective correspondence with $t_1$-modules $q_1:A.Q\to
A.Q$, and also with $t_4$-modules $q_4:Q.A\to Q.A$, rendering commutative the
respective diagrams 
$$
\xymatrix{
A^2.Q\ar[r]^-{t_1.1}\ar[d]_-{1.q}&
A^2.Q\ar[d]^-{1.q} &
Q.A^2\ar[r]^-{1.t_4}\ar[d]_-{qb.1}&
Q.A^2\ar[d]^-{qb.1}\\
A.Q\ar@{-->}[r]_-{q_1}&
A.Q &
Q.A\ar@{-->}[r]_-{q_4}&
Q.A}
$$
We only need to show that $q_1$ and $q_4$ satisfy the compatibility conditions
in Definition \ref{def:reg_mbm_module}. The last one holds since the common
diagonal in the last diagram is the associative and surjective action
$q:A.Q\to Q$. Since $q$ is surjective, the first compatibility condition
follows by 
\begin{eqnarray*}
(t_4.1)(1.q_1)(1.1.q)&=&
(t_4.1)(1.1.q)(1.t_1.1)=
(1.1.q)(t_4.1.1)(1.t_1.1)\\
&=&(1.1.q)(1.t_1.1)(t_4.1.1)=
(1.q_1)(1.1.q)(t_4.1.1)=
(1.q_1)(t_4.1)(1.1.q).
\end{eqnarray*}
In the third equality we used axiom (B) in Definition \ref{def:reg_mbm}. The
second compatibility condition in Definition \ref{def:reg_mbm_module} follows
symmetrically. 
\end{example}

A module $(Q,q_1,q_4)$ in Definition \ref{def:reg_mbm_module} induces a module
over the induced multiplier bimonad $A.(-)$ in Example
\ref{ex:multi_bimonad_from_t} by putting
$$
\xymatrix@R=8pt{
\hat q:=\big(A.X.Q\ar[r]^-{1.b^{-1}}&
A.Q.X\ar[r]^-{q_1.1}&
A.Q.X\ar[r]^-{1.b}&
A.X.Q\big) \\
\check q:=\big(A.Q.X\ar[r]^-{b.1}&
Q.A.X\ar[r]^-{q_4.1}&
Q.A.X\big).}
$$
Hence from Theorem \ref{thm:mon_cat_of_multi_bimonad_modules} we obtain the
following.

\begin{corollary}
Consider a regular multiplier bimonoid $(t_1,t_2,t_3,t_4:A^2\to
A^2,e:A\to I)$ in a braided monoidal category $\C$. Assume that
$e$ and the equal morphisms (cf. the composite of axiom (B) in Definition
\ref{def:reg_mbm} with $1.e.1$)
$$
\xymatrix{
\big(A^3\ar[r]^-{1.t_1}&
A^3\ar[r]^-{b^{-1}.1}&
A^3\ar[r]^-{m.1}&
A^2\big)=\big(
A^3\ar[r]^-{t_4.1}&
A^3\ar[r]^-{1.m}&
A^2\big)}
$$
are split epimorphisms. Then the category of modules of
Definition~\ref{def:reg_mbm_module} is monoidal in such a way that the
forgetful functor to $\C$ is strict monoidal.
\end{corollary}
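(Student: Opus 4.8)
The plan is to derive this corollary from Theorem~\ref{thm:mon_cat_of_multi_bimonad_modules}, applied to the multiplier bimonad $T=A.(-)$ of Example~\ref{ex:multi_bimonad_from_t}, in exactly the way the corollaries of Section~\ref{sec:tricoc} were derived from the single-fusion-morphism theorems.

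\emph{Translating the hypotheses.} First I would note that for $T=A.(-)$ one has $T_0=e$, so the first assumption of Theorem~\ref{thm:mon_cat_of_multi_bimonad_modules} is precisely that $e$ be a split epimorphism. For the second, I would specialise the (equal) morphisms $T(TX.TY)\to TX.TY$ of the theorem to $X=Y=I$ and absorb the coherence isomorphisms built from the braiding; this recovers exactly the morphism $(m.1)(b^{-1}.1)(1.t_1):A^3\to A^2$ --- equivalently $(1.m)(t_4.1)$, by the composite of axiom~(B) of Definition~\ref{def:reg_mbm} with $1.e.1$ --- which is the map assumed split epi in the statement. Conversely the general $T(TX.TY)\to TX.TY$ is obtained from the $X=Y=I$ instance by tensoring (on suitable sides) with $X$ and $Y$ and conjugating by braidings, so it is split epi as soon as the $X=Y=I$ instance is; since split epimorphisms are in any case preserved by tensoring, the stated hypothesis is exactly right.

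\emph{Identifying the modules with a full subcategory of $T$-modules.} Next I would show that the assignment $(Q,q_1,q_4)\mapsto(Q,\hat q,\check q)$ displayed just before the corollary is the object part of a functor $\Phi$ from the category of modules of Definition~\ref{def:reg_mbm_module} to the category of $T$-modules of Definition~\ref{def:multi_bimonad_module}, and that $\Phi$ is injective on objects and fully faithful. Injectivity on objects is clear, since $q_1$ and $q_4$ are recovered from $\hat q$ and $\check q$ by conjugating with braidings; faithfulness is immediate; and fullness follows because, evaluating the two morphism-compatibility squares of Definition~\ref{def:multi_bimonad_module} at $X=I$, the conditions on a $T$-module morphism $f$ collapse to $p_1(1.f)=(1.f)q_1$ and $q_4(f.1)=(f.1)q_4$, the conditions at general $X$ being automatic from naturality of the braiding and the explicit formulas for $\hat q$ and $\check q$. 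Thus $\Phi$ is an isomorphism onto a full subcategory of the category of $T$-modules, under which the forgetful functor of Definition~\ref{def:reg_mbm_module} becomes the restriction of the strict monoidal forgetful functor provided by Theorem~\ref{thm:mon_cat_of_multi_bimonad_modules}.

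\emph{Closure under the monoidal structure.} It then remains only to check that this full subcategory is closed under the monoidal structure of Theorem~\ref{thm:mon_cat_of_multi_bimonad_modules}: that the $T$-module structure on $I$ is the image of $(I,1_A,1_A)$, and that the tensor-product $T$-module on $P.Q$ (for $P,Q$ in the subcategory) is the image of $(P.Q,(pq)_1,(pq)_4)$, where $(pq)_1$ and $(pq)_4$ are built from $p_1,q_1$ and $p_4,q_4$ by the braiding formulas analogous to those in Corollary~\ref{cor:mon_cat_of_tric_modules} and its reverse (braid $P$ --- resp.\ $Q$ --- across, let $q_1$ --- resp.\ $q_4$ --- act, then apply $p_1$ --- resp.\ $p_4$). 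Substituting these formulas into the prescription for the induced $T$-module structure on $P.Q$ and simplifying with naturality and the coherence identities for the braiding, one recognises the tensor-product $T$-module structure of Theorem~\ref{thm:mon_cat_of_multi_bimonad_modules}. The main obstacle will be this last identification together with verifying that $(P.Q,(pq)_1,(pq)_4)$ satisfies the three compatibility diagrams of Definition~\ref{def:reg_mbm_module}: the first two follow formally from the corresponding diagrams for $P$ and $Q$ and naturality, while the third --- the one whose diagonal is the split-epi action --- is handled exactly as the analogous step inside the proof of Theorem~\ref{thm:mon_cat_of_multi_bimonad_modules}, using that $q=(T_0.1)\hat q$ is a split epimorphism to push the identity $(1.\hat q)\check p=(\check p.1)\hat q$ down to $P.Q$. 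Once closure is in hand, the monoidal structure and the strict monoidality of the forgetful functor are inherited at once.
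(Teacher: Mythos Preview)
Your proposal is correct and follows essentially the same route as the paper: the paper derives the corollary directly from Theorem~\ref{thm:mon_cat_of_multi_bimonad_modules} via the fully faithful embedding $(Q,q_1,q_4)\mapsto(Q,\hat q,\check q)$ displayed just before the statement, and then records the explicit tensor-product formulas afterwards. Your closure step is slightly more laborious than necessary: once you have identified the tensor-product $T$-module structure on $P.Q$ as lying in the image of $\Phi$ (that is, once $\hat{(pq)}$ and $\check{(pq)}$ are seen to have the required braiding-conjugate shape), the three compatibility diagrams of Definition~\ref{def:reg_mbm_module} for $(pq)_1,(pq)_4$ are just the $X=I$ (or $Y=I$) instances of the $T$-module axioms already verified in Theorem~\ref{thm:mon_cat_of_multi_bimonad_modules}, so no separate argument in the style of that theorem's proof is needed.
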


The monoidal product of modules $P$ and $Q$ is again a module via the
morphisms
$$
\xymatrix@R=8pt{
A.P.Q\ar[r]^-{1.b^{-1}}&
A.Q.P\ar[r]^-{q_1.1}&
A.Q.P\ar[r]^-{1.b}&
A.P.Q\ar[r]^-{p_1.1}&
A.P.Q\\
P.Q.A\ar[r]^-{1.b^{-1}}&
P.A.Q\ar[r]^-{p_4.1}&
P.A.Q\ar[r]^-{1.b}&
P.Q.A\ar[r]^-{1.q_4}&
P.Q.A.}
$$

\end{document}